\newcommand{\R}{\mathbb R}
\newcommand{\T}{\mathbb T}
\newcommand{\N}{\mathbb N}
\newcommand{\Px}{\mathcal{P}}
\newcommand{\Z}{\mathbb Z}
\newcommand{\holder}{H\"{o}lder~}
\newtheorem{thm}{Theorem}[section]
\newtheorem{prop}[thm]{Proposition}
\newtheorem{lem}[thm]{Lemma}
\def \a{\alpha}
\begin{document}

\title[Polynomial decay of correlations]{Area preserving surface diffeomorphisms with polynomial decay of correlations are ubiquitous}

\author{Y. Pesin}
\address{Department of Mathematics, Pennsylvania State
University, University Park, PA 16802, USA}
\email{pesin@math.psu.edu}
\author{S. Senti}
\address{Instituto de Matematica, Universidade Federal do Rio de Janeiro, C.P. 68 530, CEP 21945-970, R.J., Brazil}
\email{senti@im.ufrj.br}
\author{F. Shahidi}
\address{Department of Mathematics,
Pennsylvnia State University, University Park, PA, 16802, USA}
\email{fus144@psu.edu}

\date{\today}

\thanks{} 

\subjclass{37D25, 37D35, 37A25, 37E30}

\begin{abstract} 
We show that any smooth compact connected and oriented surface admits an area preserving $C^{1+\beta}$ diffeomorphism with non-zero Lyapunov exponents which is Bernoulli and has polynomial decay of correlations. We establish both upper and lower polynomial bounds on correlations. In addition, we show that this diffeomorphism satisfies the Central Limit Theorem and has the Large Deviation Property. Finally, we show that the diffeomorphism we constructed possesses a unique hyperbolic  Bernoulli measure of maximal entropy with respect to which it has exponential decay of correlations.
\end{abstract}
\maketitle

\section{Introduction}

A classical problem in smooth dynamics known as the \emph{smooth realization problem} asks whether there is a diffeomorphism $f$ of a compact smooth manifold $M$ which has a prescribed collection of ergodic properties with respect to a \emph{natural} invariant measure $\mu$ such as the Riemannian volume (or a more general smooth measure, i.e., a measure that is equivalent to volume). Other interesting measures to consider include the measure of maximal entropy. A yet more interesting but substantially more difficult version of the smooth realization problem is to construct a volume preserving diffeomorphism $f$ with prescribed ergodic properties on any given smooth manifold $M$. Starting with the basic ergodic property -- ergodicity -- Anosov and Katok \cite{AnoKat} constructed an example of a volume preserving \emph{ergodic} $C^{\infty}$ map with some additional metric properties. Katok \cite{katan} gave an example of area preserving $C^{\infty}$ diffeomorphism with non-zero Lyapunov exponents on any surface which is \emph{Bernoulli} (see the definitions in the next section). Later Brin, Feldman, and Katok \cite{BrinFelKat} and then Brin \cite{Brin} extended this result by constructing a volume preserving $C^{\infty}$ diffeomorphism, which is Bernoulli, on any Riemannian manifold of dimension $\ge 5$. In this example the map has all but one non-zero Lyapunov exponents. Finally, Dolgopyat and Pesin \cite{DolPes} constructed a volume preserving $C^{\infty}$ Bernoulli diffeomorphism with \emph{non-zero Lyapunov exponents} on any Riemannian manifold of dimension $\ge 2$.

It is natural to ask if a compact smooth manifold admits a volume preserving Bernoulli diffeomorphism with non-zero Lyapunov exponents that enjoys other important statistical properties such as exponential or polynomial decay of correlations (that is rate of mixing), the Central Limit Theorem, and the Large Deviations property (all three with respect to a \emph{natural} class of observables, e.g., functions which are H\"older continuous).
 
In one dimensional dynamics the famous Mauneville-Pomeau map \cite{ManPom80} (with some modifications) provide some examples of a map with an indifferent fixed point  preserving a measure which is absolutely continuous with respect to the (one-dimensional) Lebesgue measure. With respect to this measure the decay of correlations is polynomial, the Central Limit Theorem is satisfied as is the Large Deviations property (with respect to the class of H\"older continuous observables; see \cite{Gou,Hu,LSV} and Section 4 below). 

In the two dimensional case Liverani and Martens \cite{LivMer}  constructed an example of an area preserving $C^{\infty}$ diffeomorphism of the $2$-torus with non-zero Lyapunov exponents which has polynomial decay of correlations with respect to the class of smooth observables\footnote{More precisely, it is shown in \cite{LivMer} that the map admits a polynomial upper bound.}. In the present paper we show that any surface admits an area preserving $C^{1+\beta}$ diffeomorphism with non-zero Lyapunov exponents which is Bernoulli and has polynomial decay of correlations -- more precisely, it allows polynomial lower and upper bounds. It also satisfies the Central Limit Theorem and has the Large Deviation property. 

Interestingly enough the map we construct also has the unique measure of maximal entropy with exponential decay of correlations. Thus we show that any surface allows a $C^{1+\beta}$ diffeomorphism with the unique measure of maximal entropy with respect to which it has non-zero Lyapunov exponents, is Bernoulli, has exponential decay of correlations, and satisfies the Central Limit Theorem.

Our proof follows Katok, \cite{katan}. First, we construct a map $f_{T^2}$ with the desired properties on the $2$-torus. Starting with a linear automorphism of the torus which has $4$ fixed points, this is done by slowing down trajectories in sufficiently small neighborhoods of these points and then \emph{correcting} this map so that the resulting diffeomorphism preserves area. This yields a $C^{2+2\kappa}$ area preserving diffeomorphism with non-zero Lyapunov exponents which is Bernoulli. 

To study the decay of correlations we represent $f_{T^2}$ as a Young diffeomorphism and study the symbolic map on the corresponding Young tower (see Section 5). This map preserves the measure that is the lift of the area (see \cite{SZ, Youngannals}). The decay of correlations of this symbolic map has been studied extensively (see for example, \cite{Gou, mel, sarigsub, SZ, Youngannals, Youngdecay}) and is tied to the decay of the tail of the return time. Thus to establish polynomial upper and lower bounds on the decay of correlations we need to obtain both upper and lower bounds on decay of the tail. This is done in Sections 6-8 which constitute the technically most difficult part of the work. 

Our next step is to carry over the map of the torus to a map on a given surface. To achieve this we follow the approach in \cite{katan} and obtain a $C^{2+2\kappa}$ area preserving Bernoulli diffeomorphism $f_{D^2}$ of the two dimensional disk with non-zero Lyapunov exponents which is identity on the boundary of the disk. We then construct a specific diffeomorphism from the interior of the disk onto an open simply connected and dense subset of $M$, which extends to a homeomorphism from the closed disk onto $M$ and is area preserving. This diffeomorphism moves $f_{D^2}$ to an area preserving Bernoulli diffeomorphism $f_M$ of the surface with non-zero Lyapunov exponents, see Section 9.

Our next step is to use the conjugacy map and a representation of $f_{T^2}$ as a Young diffeomorphism to obtain a similar representation for $f_M$, see Section 10. 
Now to obtain an upper polynomial bound for decay of correlation we use the results in \cite{Youngdecay} and \cite{mel} which allow us to choose a class of observables which includes all H\"older continuous functions on the surface. To obtain a lower bound we use the result in \cite{Gou} and the class of H\"older continuous observables on the surface which vanish inside small neighborhoods of the fixed points, see Section 10.  

Representing the map $f_M$ as a Young tower also allows us to establish the Central Limit Theorem using the results in \cite{Gou, liv} as well as the Polynomial Large Deviation property using the results in \cite{melnic}.

The paper is organized as follows. After we provide some definitions in the next section we state our main result in Section 3. In Section 4 we construct the map on the $2$-torus and state some of its properties including its class of smoothness. In Section 5, we recall the definition of Young diffeomorphisms and describe a representation of $f_{T^2}$ as a Young diffeomorphism. The proof of the main result, Theorem \ref{mainthm1}, occupies Sections 6 through 10. In Section 6, we prove some technical results that establish new crucial properties of the slow down map. In Sections 7 and 8 we obtain polynomial respectively lower and upper bounds on the tail of the return time for the Katok map $f_{T^2}$. In Section 9 we show how to carry over the Katok map of the torus a diffeomorphism of a given surface. Finally, in section 10 we complete the proof the main result.

\textbf{Acknowledments.} The first and the second authors would like to thank Bernoulli Center (CIB, Lausanne, Switzerland) where part of the work was done for their hospitality. S.S. was supported by the CNP grant.

\section{Definitions and notations}

Let $X$ be a measurable space and $T:X\to X$ a measurable invertible transformation preserving a measure $\mu$. For reader's convenience we recall some properties of the map which are of interest to us in the paper.

\subsection{The Bernoulli property} We say that $(T,\mu)$ has the \emph{Bernoulli property} if it is metrically isomorphic to the Bernoulli shift $(\sigma,\kappa)$ associated to some Lebesgue space $(Y,\nu)$, so that $\nu$ is metrically isomorphic to the Lebesgue measure on an interval together with at most countably many atoms and $\kappa$ is given as the direct product of $\mathbb{Z}$ copies of $\nu$ on $Y^{\mathbb{Z}}$. 

\subsection{Decay of correlations} Let $\mathcal{H}_1$ and $\mathcal{H}_2$ be two classes of real-valued functions on $X$ called \emph{observables}. For $h_1\in\mathcal{H}_1$ and $h_2\in\mathcal{H}_2$ define the correlation function 
$$
\text{Cor}_n(h_1,h_2):=\int h_1(T^n(x))h_2(x)\,d\mu -\int h_1(x)\,d\mu
\int h_2(x)\,d\mu.
$$
We say that $T$ has \emph{polynomial decay of correlations} (more precisely, \emph{polynomial lower bound on correlations}) with respect to classes 
$\mathcal{H}_1$ and $\mathcal{H}_2$ if there exists $\gamma_1>0$ such that for any $h_1\in\mathcal{H}_1$, $h_2\in\mathcal{H}_2$, and any $n>0$,
$$
|\text{Cor}_n(h_1, h_2)|\le Cn^{-\gamma_1},
$$
where $C=C(h_1,h_2)>0$ is a constant.

We say that $T$ admits a \emph{polynomial lower bound on correlations} with respect to classes $\mathcal{H}_1$ and $\mathcal{H}_2$ of observables if there exists $\gamma_2>0$ such that for any $h_1\in\mathcal{H}_1$,
$h_2\in\mathcal{H}_2$, and any $n>0$,
$$
|\text{Cor}_n(h_1, h_2)|\ge C'n^{-\gamma_2},
$$ 
where $C'=C'(h_1,h_2)>0$ is a constant.

We say that $T$ has \emph{exponential decay of correlations} with respect to classes $\mathcal{H}_1$ and $\mathcal{H}_2$ if there exists $\gamma_3>0$ such that for any $h_1\in\mathcal{H}_1$, $h_2\in\mathcal{H}_2$, and any $n>0$,
$$
|\text{Cor}_n(h_1, h_2)|\le C''e^{-\gamma_3n},
$$
where $C''=C''(h_1,h_2)>0$ is a constant.

\subsection{The Central Limit Theorem} We say that $T$ satisfies the \emph{Central Limit Theorem (CLT)} with respect to a class $\mathcal{H}$ of observables on $X$ if there exists $\sigma>0$ such that for any $h\in\mathcal{H}$ with $\int h=0$ the sum
$$
\frac 1{\sqrt{n}}\sum\limits_{i=0}^{n-1}h(f^i(x))
$$ 
converges in law to a normal distribution $\textit{N}(0,\sigma)$.

\subsection{The Large Deviation property} We say that $T$ has the \emph{Polynomial Large Deviation property} with respect to a class $\mathcal{H}$ of observables on $X$ if for any $\delta>0$, $\varepsilon>0$, and any $h\in\mathcal{H}$ there exists $C=C(\delta,\varepsilon, h)>0$ such that for all $n$
$$
\mu\Bigl(\Bigl|\frac{1}{n}\sum\limits_{i=0}^{n-1}h(T^i(x))-\int h\Bigr|>\varepsilon\Bigr)<Cn^{-\beta},
$$
where $\beta>0$ is a constant independent of $\delta$, $\varepsilon$, and $h$.

\subsection{Lyapunov exponents} Let $f\colon M\to M$ be a diffeomorphism of a compact smooth Riemannian manifold $M$. Given a point $x\in M$ and a vector $v\in T_xM$, the number 
$$
\chi(x,v):=\limsup_{n\to\infty}\frac1n\log\|df^n_xv\|
$$ 
is called the \emph{Lyapunov exponents} of $v$ at $x$. One can show that for every $x\in M$ the function $\chi(x,\cdot)$ takes on finitely many values which we denote by $\chi_1(x)\le\dots\le\chi_p(x)$, where $p=\dim M$. The functions 
$\chi_i(x)$, $i=1,\dots, p$ are Borel measurable and $f$-invariant.

If $\mu$ is an $f$-invariant measure, then for $\mu$-almost every $x\in M$ and any $v\in T_xM$,  
$$
\chi(x,v)=\lim_{n\to\infty}\frac1n\log\|df^n_xv\|.
$$
We say that $f$ has \emph{nonzero Lyapunov exponents} with respect to $\mu$ or that $\mu$ is \emph{hyperbolic} if for $\mu$-almost every $x$ we have that 
$\chi_i(x)\ne 0$, $i=1,\dots, p$ and that $\chi_1(x)<0$ while $\chi_p(x)>0$. 

Note that if $\mu$ is ergodic, then $\chi_i(x)=\chi_i(\mu)$ for all $i=1,\dots, p$ and $\mu$-almost every $x\in M$. 

\section{Main Results}

Let $M$ be a smooth compact connected oriented surface with area $m$. Without loss of generality we assume that $m(M)=1$. Given $\rho>0$, let $C^\rho:=C^\rho(M)$ be the class of all H\"older continuous functions on $M$.

Consider a nested sequence of subsets $\{M_j\}$ that exhaust $M$ that is 
$M_1\subset M_2\subset\ldots\subset M$ and $\bigcup_{j\ge 1}M_j=M$. Given such a sequence, let $\mathcal{G}=\mathcal{G}(\{M_j\})$ be the class of observables  
$h\subset C^\rho$ for which there is $k=k(h)$ such that $\text{supp}(h)\subset M_k$.

Given $0<\alpha<1$ and $0<\mu<1$, denote by
\begin{equation}\label{gamma-gamma'}
\gamma=\frac1{2\a}+2^{\a-1}(1+\mu)+\frac{1-\mu}6,\quad \gamma'=\frac1{2\a}+\frac{1-\mu}{2^{\a+2}}.
\end{equation}
If $0<\alpha<\frac14$ and $0<\mu<\frac12$, then clearly $\gamma>\gamma'>2$. We now state our main result. 
\begin{thm}\label{mainthm1}
Let $M$ be a compact smooth connected and oriented surface. For any 
$\frac19<\alpha<\frac14$ and $0<\mu<\frac12$ there are 
$\beta=\beta(\alpha,\mu)>0$ and an area preserving $C^{1+\beta}$ diffeomorphism $f$ of $M$ satisfying:
\begin{enumerate}
\item $f$ has the Bernoulli property;
\item $f$ has non-zero Lyapunov exponents almost everywhere with respect to $m$;
\item $f$ admits a polynomial upper bound on correlations with respect to the class $C^\rho$ of observables and a polynomial lower bound on correlations with respect to some sequence of subsets $\{M_j\}$ and the corresponding class 
$\mathcal{G}$ of observables; more precisely: 
\begin{enumerate}
\item for any $h_i\in C^\rho$, $i=1,2$ for which $\int h_1\,dm\int h_2\,dm\ne 0$,  
$$
|\text{Cor}_n(h_1, h_2)|\le C'n^{-(\gamma'-2)},
$$
where $C'=C'(\|h_1\|_{C^\rho}, \|h_2\|_{C^\rho})>0$;
\item for any $h_i\in\mathcal{G}$, $i=1,2$ for which $\int h_1dm\int h_2dm>0$,
$$
Cn^{-(\gamma-2)}\le|\text{Cor}_n(h_1, h_2)|,
$$
where $C=C(\|h_1\|_{C^\rho}, \|h_2\|_{C^\rho})>0$;
\item if $h_i\in C^\rho$, $i=1,2$ and $\int h_1\,dm= 0$, then
$$
|\text{Cor}_n(h_1, h_2)|= \mathcal{O}(n^{-(\gamma-2)});
$$
\end{enumerate}
\item $f$ satisfies the CLT for the class of observables $h\in C^\rho$, 
$\int h\,dm=0$ with $\sigma=\sigma(h)$ given by 
$$
\sigma^2=-\int h^2dm+2\sum\limits_{n=0}^{\infty}\int h\cdot h\circ f^ndm,
$$
where $\sigma>0$ if and only if $h$ is not cohomologous to zero, i.e. $h\circ f\ne g\circ f-g$ for any $g\in C^\rho$;
\item $f$ has the Polynomial Large Deviation property with respect to the class 
$C^\rho$ of observables with the constant $C$ of the form
$C=C(\|h\|_{C^\rho})\varepsilon^{-2\beta}$ where 
$\beta=\gamma'-2-\delta$ for some sufficiently small $\delta>0$. In addition, for an open and dense subset of observables in $C^\rho$ and sufficiently small $\varepsilon>0$
$$
n^{-\beta}<m\Bigl(\Bigl|\frac{1}{n}\sum\limits_{i=0}^{n-1}h(f^i(x))-\int h\Bigr|>\varepsilon\Bigr)
$$
for infinitely many $n$;
\item $f$ has a unique measure of maximal entropy (MME) with respect to which it has the Bernoulli property, non-zero Lyapunov exponents almost everywhere, exponential decay of correlations and satisfies the CLT with respect to the class $C^\rho$ of observables.
\end{enumerate}
\end{thm}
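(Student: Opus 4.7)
The plan is to follow the architecture outlined in the introduction: build a concrete model $f_{T^2}$ on the 2-torus, analyze its statistics through a Young tower representation, and then transport the construction to an arbitrary surface $M$ via an area preserving conjugacy. I start from a hyperbolic toral automorphism with four fixed points, perform a Katok slow-down in small disks around these points governed by the parameters $\alpha,\mu$, and apply a volume preserving correction as in \cite{katan, DolPes}. This yields a $C^{2+2\kappa}$ area preserving diffeomorphism with non-zero Lyapunov exponents (the slow-down is supported on a set of small area, so hyperbolicity survives) and the Bernoulli property, the latter coming from Pesin theory together with a mixing argument for the hyperbolic product structure on the complement of the fixed points.

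\textbf{Tail estimates and statistics on the torus.} I next represent $f_{T^2}$ as a Young diffeomorphism by choosing a rectangular hyperbolic product set $\Lambda$ disjoint from the fixed points and inducing the return map, lifting area to a tower measure \cite{SZ, Youngannals}. The technical core is to establish matching polynomial upper and lower bounds on the return-time tail. Near a fixed point the slow-down dynamics behaves like the time-one map of a flow with an indifferent singularity, so orbits wandering close to a fixed point are trapped for a polynomially long time; careful asymptotics in both directions yield precisely the exponents $\gamma,\gamma'$ of \eqref{gamma-gamma'}. Once these tail bounds are in hand, the polynomial upper bound on $\text{Cor}_n$ and the CLT follow from \cite{Youngdecay, mel, liv, Gou}, the matching lower bound from \cite{Gou} (which forces the use of the class $\mathcal{G}$ of observables vanishing near the fixed points so that the Young return map can see a non-degenerate signal), and the polynomial large deviation estimate from \cite{melnic}.

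\textbf{Transport to a general surface and MME.} To go from the torus to $M$, I first modify $f_{T^2}$ to an area preserving Bernoulli diffeomorphism $f_{D^2}$ of the closed disk which is the identity on $\partial D^2$, again following \cite{katan}. I then exhibit an area preserving homeomorphism $h\colon\overline{D^2}\to M$ which is a diffeomorphism from the interior onto an open dense full-area subset of $M$, and set $f_M:=h\circ f_{D^2}\circ h^{-1}$, extended continuously. The Young structure pushes forward under $h$, carrying along the tail estimates and the H\"older regularity of observables, so items (1)--(5) of the theorem transfer. For item (6), I exhibit a \emph{different} inducing scheme supported away from the slow-down disks whose return map has exponential tails with respect to the equilibrium state of the constant potential, and invoke the exponential mixing result of \cite{Youngannals} together with standard uniqueness arguments for Young towers with exponential tails.

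\textbf{Main obstacle.} The technically hardest step is the lower bound on the return-time tail: the upper bound reduces to a first-passage estimate for the slow-down map, whereas the lower bound requires identifying a positive-measure set whose orbit segments genuinely saturate the prescribed polynomial rate, and then threading this estimate through Gou\"ezel's framework to produce a genuinely non-zero lower bound on $|\text{Cor}_n|$. A secondary obstacle is the smoothness bookkeeping when pushing $f_{D^2}$ through $h$: the derivatives of $h$ near $\partial D^2$ must be controlled so that $f_M$ is genuinely $C^{1+\beta}$ on all of $M$, and it is this estimate that pins down the admissible range $\frac19<\alpha<\frac14$, $0<\mu<\frac12$ and determines $\beta=\beta(\alpha,\mu)$.
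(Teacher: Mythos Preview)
Your overall architecture matches the paper's: Katok slow-down on $\T^2$, Young tower with polynomial tail bounds yielding (3)--(5) via \cite{Youngdecay,mel,Gou,liv,melnic}, and transport to $M$ through the disk. Two points deserve correction.

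\textbf{Source of the parameter constraints.} You attribute the range $\frac19<\alpha<\frac14$, $0<\mu<\frac12$ to the smoothness bookkeeping for $f_M$. In the paper the upper bound $\alpha<\frac14$ is dictated by the correlation analysis, not by regularity: one needs $\gamma'>2$ so that the tail $m(\tau>n)\le Cn^{-(\gamma'-1)}$ is summable (for the CLT) and so that Gou\"ezel's remainder $R_{\gamma'}(n)$ is dominated by the main term. The lower-bound argument then requires the explicit comparison $\gamma-2<\gamma'-1$ (when $\alpha<\frac16$) or $\gamma-2<2\gamma'-4$ (when $\frac16<\alpha<\frac14$), and it is checking these inequalities that forces $\mu<\frac12$ and $\alpha<\frac14$. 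The smoothness calculation in the paper only produces $f_{\T^2}\in C^{2+2\kappa}$ with $\kappa=\alpha/(1-\alpha)$ and then a positive $\beta$ after conjugation; it does not by itself cap $\alpha$ at $\frac14$. (Incidentally, the paper routes the construction through the sphere, $\T^2\to S^2\to D^2\to M$, not directly $\T^2\to D^2$.)

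\textbf{The MME argument.} Your plan for item (6) --- build a \emph{separate} inducing scheme with exponential tails for the equilibrium state --- is not what the paper does and is not obviously workable as stated (you would have to produce the MME before you can speak of tails ``with respect to'' it). The paper keeps the \emph{same} Young tower and instead counts the number $\mathcal S_n$ of $s$-sets with return time $n$. Because $f_M$ is topologically conjugate to the hyperbolic automorphism $A$, one has $\#\mathcal S_n\le e^{hn}$ with $h<h_{\text{top}}(A)$; this subexponential-relative-to-entropy growth is exactly the hypothesis of \cite{ind} (Theorem 7.1) and \cite{SZ}, which then deliver existence, uniqueness, hyperbolicity, Bernoulli, exponential decay, and CLT for the MME in one stroke.
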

		
\section{A slow down map of the $2$-torus}

\subsection{The definition of a slow down map} Consider the automorphism of the two-dimensional torus $\T^2=\R^2/\Z^2$ given by the matrix
$A:=\left(\begin{smallmatrix} 5 & 8\\8 & 13\end{smallmatrix}\right)$. It has four fixed points $x_1=(0,0)$, $x_2=(\frac12, 0)$, $x_3=(0, \frac12)$, and 
$x_4=(\frac 12, \frac 12)$. For $i=1,2,3,4$ consider the disk 
$D_r^i=\{(s_1,s_2): {s_1}^2+{s_2}^2\le r^2\}$ of radius $r$ centered at $x_i$ and set $D_r=\bigcup_{i=1}^4D^i_r$. Here $(s_1,s_2)$ is the coordinate system obtained from the eigendirections of $A$ and originated at $x_i$. Let $\lambda>1$ be the largest eigenvalue of $A$. There are $r_1>r_0$ such that
\begin{equation}\label{eq:r1}
D_{r_0}^i\subset\text{int}A(D_{r_1}^i)\cap\text{int}A^{-1}(D_{r_1}^i)
\end{equation}
and the disks $D_{r_1}^i$ are pairwise disjoint. Fix $i$ and consider the system of differential equations in $D_{r_1}^i$
\begin{equation}\label{batata10}
\frac{ds_1}{dt}= s_1\log\lambda,\quad \frac{ds_2}{dt}=-s_2\log\lambda.
\end{equation}
Observe that $A|_{D^i_{r_0}}$ is the time-$1$ map of the local flow generated by this system.

We choose a number $0<\alpha<1$ and a function $\psi:[0,1]\mapsto[0,1]$ satisfying:
\begin{enumerate}
\item[(K1)] $\psi$ is of class $C^{\infty}$ everywhere but at the origin;
\item[(K2)] $\psi(u)=1$ for $u\ge r_0$ and some $0<r_0<1$;
\item[(K3)] $\psi'(u)> 0$ for  $0<u<r_0$;
\item[(K4)] $\psi(u)=(u/r_0)^\alpha$ for $0\le u\le\frac{r_0}{2}$.
\end{enumerate}
Using the function $\psi$, we slow down trajectories of the flow by perturbing the system \eqref{batata10} in $D_{r_0}^i$ as follows
\begin{equation}\label{batata2}
\begin{aligned}
\frac{ds_1}{dt}=&\quad s_1\psi({s_1}^2+{s_2}^2)\log\lambda\\
\frac{ds_2}{dt}=&- s_2\psi({s_1}^2+{s_2}^2)\log\lambda.
\end{aligned}
\end{equation}
This system of differential equations generates a local flow. Denote by $g^i$ the time-$1$ map of this flow. The choices of $\psi$, $r_0$ and $r_1$ (see \eqref{eq:r1}) guarantee that the domain of $g^i$ contains $D_{r_0}^i$. Furthermore, $g^i$ is of class $C^\infty$ in $D_{r_0}^i\setminus \{x_i\}$ and it coincides with $A$ in some neighborhood of the boundary $\partial D_{r_0}^i$. Therefore, the map
\begin{equation}\label{mapG}
G(x)=\begin{cases} A(x) & \text{if $x\in\T^2\setminus D_{r_0}$,}\\
g^i(x) & \text{if $x\in D_{r_0}^i$}
\end{cases}
\end{equation}
defines a homeomorphism of the torus $\T^2$, which is a $C^\infty$ diffeomorphism everywhere except at the fixed points $x_i$. Since $0<\alpha<1$, we have that
$$
\int_0^1\frac{du}{\psi(u)}<\infty.
$$
This implies that the map $G$ preserves the probability measure
\begin{equation}\label{prob-measure}
d\nu=q_0^{-1}q\,dm,
\end{equation} 
where $m$ is the area and the density $q$ is a positive $C^\infty$ function that is infinite at~$x_i$ and is defined by
$$
q(s_1,s_2):=
\begin{cases} 
(\psi({s_1}^2+{s_2}^2))^{-1}&\text{if } (s_1,s_2)\in D_{r_0}^i,\\ 
1 & \text{in } \T^2\setminus D_{r_0}
\end{cases}
$$
and
\[
q_0:=\int_{\T^2}q\,dm.
\]
We further perturb the map $G$ by a coordinate change $\phi$ in $\T^2$ to obtain an area preserving map. To achieve this, define a map $\phi$ in $D_{r_0}^i$ by the formula
\begin{equation}\label{mapshi}
\phi(s_1,s_2):=\frac{1}{\sqrt{q_0({s_1}^2+{s_2}^2)}}
\bigg(\int_0^{{s_1}^2+{s_2}^2}\frac{du}{\psi(u)}\bigg)^{1/2}(s_1,s_2)
\end{equation}
and set $\phi=\text{Id}$ in $\T^2\setminus D_{r_0}$. Clearly, $\phi$ is a homeomorphism and is a $C^\infty$ diffeomorphism outside the points $x_1,x_2,x_3,x_4$. One can show that $\phi$ transfers the measure $\nu$ into the area and that the map $f_{\T^2}=\phi\,\circ\, G\,\circ\,\phi^{-1}$ is a homeomorphism and is a $C^\infty$ diffeomorphism outside the points $x_1,x_2,x_3,x_4$. It is called a \emph{slow down map} (see \cite{katan} and also \cite{BP13}). 

The following proposition describes some basic properties of this map.

\begin{prop}[\cite{katan},\cite{BP13}]\label{smoothH}
The map $f_{\T^2}$ has the following properties:
\begin{enumerate}
\item It is topologically conjugated to $A$ via a homeomorphism $H$.
\item It admits two transverse invariant continuous stable and unstable distributions $E^s(x)$ and $E^u(x)$ and for almost every point $x$ with respect to area $m$ it has two non-zero Lyapunov exponents, positive in the direction of $E^u(x)$ and negative in the direction of $E^s(x)$. Moreover, the only invariant measure with zero Lyapunov exponents is the atomic measure supported on the fixed points $x_i$.
\item It admits two continuous, uniformly transverse, invariant foliations with smooth leaves which are the images under the conjugacy map of the stable and unstable foliations for $A$ respectively.
\item For every $\varepsilon>0$ one can choose $r_0>0$ such that 
$$
|\int\log |D\,f_{\T^2}|E^u|\,dm-\log\lambda|<\varepsilon.
$$
\item It is ergodic with respect to the area $m$.
\end{enumerate}
\end{prop}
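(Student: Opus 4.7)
The plan is to obtain (1)--(5) by transferring the corresponding structure from the linear automorphism $A$, exploiting the two conjugacies built into the construction: a topological conjugacy between $G$ and $A$, and the homeomorphism $\phi$ between $G$ and $f_{\T^2}$ that is a $C^\infty$ diffeomorphism off the fixed points $\{x_i\}$.

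For (1), I first produce a topological conjugacy $H_G$ between $G$ and $A$. The crucial observation is that inside each $D_{r_0}^i$, system \eqref{batata2} is obtained from \eqref{batata10} by multiplying the vector field by the positive function $\psi$; hence the two local flows share the same oriented orbits and, in particular, the same local stable and unstable manifolds at $x_i$. Outside $D_{r_0}$, $G$ coincides with $A$. Matching invariant manifolds of the fixed points and propagating along the dynamics via the expansivity and local product structure inherited from the Anosov map $A$, one obtains $H_G$. Setting $H=\phi\circ H_G$ conjugates $A$ to $f_{\T^2}$ and proves (1). Claims (2) and (3) then follow by pushing the stable and unstable distributions and foliations of $A$ forward by $H$: continuity, transversality and $f_{\T^2}$-invariance are inherited, and since $\phi$ is smooth off $\{x_i\}$ and sends smooth curves to smooth curves, the resulting foliations have smooth leaves. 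Nonvanishing of the Lyapunov exponents on a full-area set comes from the fact that $f_{\T^2}$ equals $A$ outside $\phi(D_{r_0})$ while the slowdown only degenerates the hyperbolic rates exactly at the fixed points; combined with Birkhoff's theorem and the finite measure of the slowdown region, the only invariant measure charging points of zero exponents is the atomic one on $\{x_1,\dots,x_4\}$.

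Claim (4) then follows directly: outside $\phi(D_{r_0})$ the log-expansion rate of $f_{\T^2}$ along $E^u$ equals $\log\lambda$, the area of $\phi(D_{r_0})$ tends to $0$ as $r_0\to 0$, and $\log|Df_{\T^2}|E^u|$ is nonnegative and integrable against $m$, so dominated convergence yields the bound. For (5), I invoke Pesin theory: $f_{\T^2}$ is $C^{1+\beta}$ (shown in \cite{katan,BP13}), area preserving and nonuniformly hyperbolic with respect to $m$ by (2), so by Pesin's ergodic decomposition the hyperbolic part splits into at most countably many ergodic components of positive area. Topological transitivity, inherited from the conjugacy to $A$, together with the absolute continuity of the Pesin stable and unstable foliations, forces a single component to carry full area.

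The technical heart of the argument, and the step I expect to be the main obstacle, is (2): establishing nonvanishing of the Lyapunov exponents off $\{x_i\}$. The slowdown is designed so that $Df_{\T^2}$ degenerates precisely at the fixed points, and one must rule out any invariant set of positive area on which the Birkhoff average of $\log|Df_{\T^2}|E^u|$ vanishes. This reduces to a careful estimate of the proportion of time a typical orbit spends inside $D_{r_0}$, together with sharp control on the degeneration rate of $\psi$ at the origin; this is the standard Katok computation, but it must be carried out in the coordinates introduced by $\phi$, where the map is no longer smooth at the $x_i$'s.
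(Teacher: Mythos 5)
The paper does not prove Proposition~\ref{smoothH}: it is stated with the attribution [\cite{katan},\cite{BP13}] and used as a known result, so there is no in-paper argument to compare against. Your outline follows the standard route from those references (orbit equivalence of the flows in $D_{r_0}^i$, a shadowing/expansivity construction of the conjugacy, pushing forward the Anosov foliations, the integrability $\int_0^1 du/\psi(u)<\infty$ ensuring the slowdown does not kill the exponents, and a Hopf--Pesin argument for ergodicity), and at the level of a sketch it is in order. Two small remarks on precision: for item~(4), the ``dominated convergence'' phrasing is a little off since $f_{\T^2}$ itself varies with $r_0$; the clean statement is the direct estimate $\bigl|\int\log|Df_{\T^2}|E^u|\,dm-\log\lambda\bigr|\le 2\log\lambda\cdot m(\phi(D_{r_0}))$, with $m(\phi(D_{r_0}))=\nu(D_{r_0})\to 0$ because $\phi_*\nu=m$ and $q=1/\psi$ is integrable. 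For item~(2), you should make explicit the non-circular step: since $\log|Df_{\T^2}|E^u|\ge 0$ everywhere and vanishes only on $\{x_i\}$, any invariant measure $\mu$ with $\chi^u=0$ $\mu$-a.e.\ forces $\int\log|Df_{\T^2}|E^u|\,d\mu=0$, hence $\mu$ is supported on the fixed points; applied to $m$ this gives nonvanishing exponents a.e.\ without presupposing ergodicity.
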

 The following proposition establishes regularity of the map $f_{\T^2}$.
\begin{prop}\label{f is1plusbeta}
The map $f_{\T^2}$ is of class of smoothness $C^{2+2\kappa}$, where 
$\kappa=\frac{\a}{1-\a}$.
\end{prop}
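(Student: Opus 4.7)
The map $f_{\T^2}=\phi\circ G\circ\phi^{-1}$ is a composition of $C^\infty$ diffeomorphisms on $\T^2\setminus\{x_1,x_2,x_3,x_4\}$, so it is $C^\infty$ there; only the regularity at the four fixed points needs to be verified, and by the symmetry of the construction it suffices to treat one of them, say $x_1=0$, in the eigendirection coordinates $(s_1,s_2)$.

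In these coordinates and for $r^2=s_1^2+s_2^2$ sufficiently small so that $\psi(u)=(u/r_0)^\alpha$ applies throughout the relevant set, formula \eqref{mapshi} reduces to the explicit radial expression $\phi(s)=\mu_0\,|s|^{-\alpha}\,s$ with $\mu_0=(r_0^\alpha/(q_0(1-\alpha)))^{1/2}$, and its inverse is $\phi^{-1}(t)=\mu_0^{-(1+\kappa)}\,|t|^{\kappa}\,t$. Because the vector field \eqref{batata2} annihilates $s_1 s_2$, the time-$1$ map $g^i$ preserves this product and has the form
\[
g^i(s_1,s_2)=\bigl(s_1\lambda^{\tau^*(s)},\,s_2\lambda^{-\tau^*(s)}\bigr),
\]
where $\tau^*(s)$ is defined implicitly by $1=\int_0^{\tau^*(s)}[\psi(s_1^2\lambda^{2\tau}+s_2^2\lambda^{-2\tau})]^{-1}\,d\tau$. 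My plan is to use these explicit formulas to write $f_{\T^2}(t)$ as a single concrete expression and to track every non-smooth factor it produces.

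The decisive algebraic observation is that the singular radial factors in $\phi$ and $\phi^{-1}$ cancel: $\phi^{-1}$ contributes a factor $|t|^{\kappa}$ while applying $\phi$ at $g^i(\phi^{-1}(t))$ contributes a factor $|s|^{-\alpha}=|t|^{-\alpha(1+\kappa)}=|t|^{-\kappa}$, so $f_{\T^2}$ extends continuously to $x_1$. The residual nonsmoothness arises solely from the dependence of $\tau^*(s)$ on $s$. Taylor-expanding the implicit equation for $\tau^*$ and using that $\psi(u)\sim u^\alpha$ near $u=0$, one finds that the corrections to the linear part of $f_{\T^2}$ pick up powers of $|s|^{2\alpha}=|t|^{2\alpha(1+\kappa)}=|t|^{2\kappa}$, and this identity $2\alpha(1+\kappa)=2\kappa$ is precisely the source of the Hölder exponent $2\kappa$ on $D^2 f_{\T^2}$.

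The main obstacle and bulk of the technical work is obtaining sharp pointwise and Hölder bounds on derivatives of order up to two. Concretely, one must differentiate $\phi\circ g^i\circ\phi^{-1}$ twice while tracking the order in $|t|$ of each resulting term, verify that the potentially singular contributions of negative or zero order combine into a well-defined continuous value of $D^2 f_{\T^2}$ at $x_1$, and then extract from the leading residual terms an estimate of the form $\|D^2 f_{\T^2}(t)-D^2 f_{\T^2}(t')\|\le C|t-t'|^{2\kappa}$. Matching this local behavior with the $C^\infty$ behavior on the complement of the slow-down region (where $f_{\T^2}$ coincides with $A$) then yields the global $C^{2+2\kappa}$ regularity.
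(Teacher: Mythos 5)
Your route is genuinely different from the paper's. You work directly with the explicit formulas for $\phi$, $\phi^{-1}$, and $g^i$ (encoded through the implicit function $\tau^*$), aiming to differentiate the composition $\phi\circ g^i\circ\phi^{-1}$ twice and track the resulting powers of $|t|$. The paper instead exploits the symplectic structure: it observes that $f_{\T^2}$ is the time-$1$ map of a Hamiltonian flow whose Hamiltonian $H_2=H_1\circ\phi^{-1}$ (with $H_1=s_1s_2\log\lambda$) has, near each $x_i$, the closed form $H_2(s_1,s_2)=s_1s_2(s_1^2+s_2^2)^{\kappa}\log\lambda$, and then verifies by elementary calculus that this scalar function has $2\kappa$-H\"older second partials. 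Passing to the Hamiltonian converts the regularity question for a planar map given only through an implicit return-time into a regularity question for a single explicit homogeneous function of degree $2+2\kappa$, which is an enormous algebraic simplification.

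That said, your proposal has a real gap. The cancellation of the singular radial factors (the identity $\alpha(1+\kappa)=\kappa$) and the continuity of $f_{\T^2}$ at the fixed point are correct, but everything after that is announced rather than established: that the correction terms "pick up powers of $|t|^{2\kappa}$," that $D^2f_{\T^2}$ extends continuously, and that $\|D^2f_{\T^2}(t)-D^2f_{\T^2}(t')\|\le C|t-t'|^{2\kappa}$. You flag this yourself as "the main obstacle and bulk of the technical work," and that bulk is absent. The computation is not a formality: one must differentiate twice through the implicit definition of $\tau^*$, control the factors of $u^{-\alpha}$ and $u^{-\alpha-1}$ that the chain rule produces, and confirm that no negative power of $|t|$ survives after the cancellations. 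Without those estimates the proposal is an outline, not a proof, and it is exactly this bookkeeping that the paper's reduction to the Hamiltonian $H_2$ is designed to avoid.
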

\begin{proof} Fix $i\in\{1,2,3,4\}$ and consider the vector field in $D^i_{r_0}$ given by the right-hand side of \eqref{batata10}. It is Hamiltonian with respect to the area and the Hamiltonian function $H_1(s_1,s_2)=s_1s_2\log\lambda$. The vector field given by \eqref{batata2} is obtained from \eqref{batata10} by a time change and hence, is also Hamiltonian with respect to the measure $\nu$ (see \eqref{prob-measure}) and the same Hamiltonian function. The map $f_{\T^2}$ is conjugate via $\varphi$ (see \eqref{mapshi}) to the time-$1$ map of the flow generated by \eqref{batata2}. Since $\phi_*\nu=m$, $f_{\T^2}$ is the time-$1$ map of the flow which is Hamiltonian with respect to the area and the Hamiltonian function $H_2=H_1\circ\phi^{-1}$. Using \eqref{mapshi}, we find that $H_2$ can be given in $D^i_{r_0}$ as follows (see \cite{katan}):
$$
H_2(s_1,s_2)=\frac{s_1s_2 h(\sqrt{s_1^2+s_2^2})}{s_1^2+s_2^2}\log\lambda,
$$
where by $(K4)$, $h(u)=u^{\frac{2}{1-\a}}$ and $u=s_1^2+s_2^2$. To prove that $f_{\T^2}$ is of the desired class of smoothness we will show that the Hamiltonian $H_2$ has H\"older continuous partial derivatives of second order with H\"older exponent $2\kappa$. To this end we consider the function $g(x,y)=xy(x^2+y^2)^{\kappa}$ with $\kappa=\frac{\a}{1-\a}$ and show that $g$ has H\"older continuous partial derivatives of second order with H\"older exponent $2\kappa$. Note that $g$ is of class $C^{\infty}$ except for $(x,y)=(0,0)$, so we only need to show H\"older continuity of partial derivatives at the origin. Note also that the function $g$ is symmetric, so we only show that 
$\frac{\partial^2 g}{\partial x^2}$ and $\frac{\partial^2 g}{\partial x\partial y}$ are H\"older continuous. Since $\frac{\partial g}{\partial x}(0,0)=\lim\limits_{\Delta x\to 0}\frac{g(\Delta x,0)-g(0,0)}{\Delta x}=0$, we have that 
$$
\frac{\partial g}{\partial x}=
\begin{cases}
y(x^2+y^2)^{\kappa}+2\kappa x^2y(x^2+y^2)^{\kappa-1},& (x,y)\neq (0,0);\\
0& (x,y)=(0,0).\\
\end{cases}
$$
Note that $$
\frac{\partial^2 g}{\partial x\partial y}(0,0)=\lim\limits_{\Delta y\to 0}\frac{\frac{\partial g}{\partial x}(0,\Delta y)-\frac{\partial g}{\partial x}(0,0)}{\Delta y}=\lim\limits_{\Delta y\to 0}\frac{\Delta y(\Delta y)^{2\kappa}-0}{\Delta y}=0
$$
and hence,
$$
\frac{\partial^2 g}{\partial x\partial y}=\begin{cases}
(1+2\kappa)(x^2+y^2)^{\kappa}+\\
\quad +4(\kappa-1)x^2y^2(x^2+y^2)^{\kappa-2}&(x,y)\neq (0,0);\\
0 &(x,y)=(0,0).
\end{cases}
$$
Since the function $\frac{\partial^2 g}{\partial x\partial y}$ is differentiable for all 
$(x,y)\neq (0,0)$, it is H\"older continuous for all pairs of nonzero points $(x,y)$. It remains to show H\"older continuity for pairs of points one of which is zero. It is easy to see that 
$$
\big|\frac{\partial^2 g}{\partial x\partial y}(x,y)-\frac{\partial^2 g}{\partial x\partial y}(0,0)\big|
\le K(x^2+y^2)^{\kappa}=K d((x,y), (0,0))^{2\kappa},
$$
where $K>0$ and $d$ denotes the usual distance. Thus 
$\frac{\partial^2 g}{\partial x\partial y}$ is H\"older continuous with H\"older exponent $2\kappa$.

Now we consider $\frac{\partial^2 g}{\partial x^2}$. Observe that 
$\frac{\partial^2 g}{\partial x^2}(0,0)=0$ and 
$$
\frac{\partial^2 g}{\partial x^2}=
\begin{cases}
6\kappa xy(x^2+y^2)^{\kappa-1}+4(\kappa-1)x^3y(x^2+y^2)^{\kappa-2} & (x,y)\neq (0,0);\\ 
0, & (x,y)=(0,0). 
\end{cases} 
$$ 
It is easy to see that 
$$
\big|\frac{\partial^2 g}{\partial x^2}(x,y)-\frac{\partial^2 g}{\partial x^2}(0,0)\big|\le
K(x^2+y^2)^{\kappa}=K d((x,y), (0,0))^{2\kappa}.
$$
Hence, $\frac{\partial^2 g}{\partial x^2}$ is H\"older continuous with H\"older exponent $2\kappa$.
\end{proof}

\subsection{One dimensional maps with indifferent fixed point}
Various authors including \cite{Gou, Hu, LSV} have studied intermittency (Pomeau-Maneville)maps of the type $T:[0,1]\to [0,1]$ given by
$$
T(x)=\begin{cases}
x(1+2^{\a}x^{\a}) &0\le x\le \frac 12\\
2x-1 & \frac 12< x\le 1
\end{cases}
$$
where $0<\a<1$, for which zero is an indifferent fixed point. It is shown in \cite{Hu} that $T$ has polynomial decay of correlations with exponent 
$\gamma=\frac1{\a}-1$ for the class of Lipschitz continuous functions and there are Lipschitz continuous functions $h_1$ and $h_2$ for which $T$ has a polynomial lower bound with the same exponent $\gamma$.

We would like to emphasize that the methods for obtaining the above correlations estimates are substantially different from ours. Indeed, they use the representation of the map $T$ as a renewal shift (see for example, \cite{Hu}). The latter is a particular case of Young tower (see the next  section) with the inducing time $\tau$ to be the first return time to the base of the tower $\Lambda=[\frac12,1]$. The crucial feature of this tower is that there exists exactly one partition element $\Lambda_n\subset [\frac12,1]$ with the return time 
$\tau(\Lambda_n)=n$. While for the slow-down map we also construct a Young tower representation for which the induced time is the first return time to the base, the number of partition elements with a given inducing time may (and do) grow exponentially. 

\section{Proof of Theorem \ref{mainthm1}: representing $f_{\T^2}$ as a Young diffeomorphism} 

\subsection{Young diffeomorphisms}

Consider a $C^{1+\epsilon}$ diffeomorphism $f:M\to M$ of a compact smooth Riemannian manifold $M$. Following \cite{Youngannals} we describe a collection of conditions on the map $f$. \footnote{Our requirements are slightly different than those in \cite{Youngannals} since we do not assume the inducing domain $\Lambda$ to be compact.}

An embedded $C^1$-disk $\gamma\subset M$ is called an \emph{unstable disk} (respectively, a \emph{stable disk}) if for all $x,y\in\gamma$ we have that
$d(f^{-n}(x), f^{-n}(y))\to 0$ (respectively, $d(f^{n}(x), f^{n}(y))\to 0$) as $n\to+\infty$. A collection of embedded $C^1$ disks $\Gamma^u=\{\gamma^u\}$\label{sb:gamma} is called a \emph{continuous family of unstable disks} if there exists a homeomorphism
$\Phi: K^s\times D^u\to\cup\gamma^u$ satisfying:
\begin{itemize}
\item $K^s\subset M$ is a Borel subset and $D^u\subset \R^d$ is the closed unit disk for some $d<\dim M$;
\item $x\to\Phi|{\{x\}\times D^u}$ is a continuous map from $K^s$ to the space of $C^1$ embeddings of $D^u$ into $M$ which can be extended to a continuous map of the closure 
$\overline{K^s}$;
\item $\gamma^u=\Phi(\{x\}\times D^u)$ is an unstable disk.
\end{itemize}
A \emph{continuous family of stable disks} is defined similarly.

We allow the sets $K^s$ to be non-compact in order to deal with overlaps which appear in most known examples including the Katok map.

A set $\Lambda\subset M$ has \emph{hyperbolic product structure} if there exists a continuous family $\Gamma^u=\{\gamma^u\}$ of unstable disks $\gamma^u$ and a continuous family $\Gamma^s=\{\gamma^s\}$ of stable disks $\gamma^s$ such that
\begin{itemize}
\item $\text{dim }\gamma^s+\text{dim }\gamma^u=\text{dim } M$;
\item the $\gamma^u$-disks are transversal to $\gamma^s$-disks with an angle uniformly bounded away from $0$;
\item each $\gamma^u$-disks intersects each $\gamma^s$-disk at exactly one point;
\item $\Lambda=(\cup\gamma^u)\cap(\cup\gamma^s)$.
\end{itemize}

A subset $\Lambda_0\subset\Lambda$ is called an \emph{$s$-subset} if it has hyperbolic product structure and is defined by the same family $\Gamma^u$ of unstable disks as 
$\Lambda$ and a continuous subfamily $\Gamma_0^s\subset\Gamma^s$ of stable disks. A \emph{$u$-subset} is defined analogously.

We define the \emph{$s$-closure} $scl(\Lambda_0)$ of an \emph{$s$-subset} 
$\Lambda_0\subset\Lambda$ by
$$  
scl(\Lambda_0):= \bigcup_{x\in \overline{\Lambda_0\cap\gamma^u}} \gamma^s(x)\cap \Lambda 
$$
and the \emph{u-closure} $ucl(\Lambda_1)$ of a given \emph{u-subset} 
$\Lambda_1\subset \Lambda$ similarly:
$$ 
ucl(\Lambda_1):= \bigcup_{x\in \overline{\Lambda_1\cap \gamma^s}} \gamma^u(x)\cap \Lambda.  
$$ 
Assume the map $f$ satisfies the following conditions:
\begin{enumerate}
\item[(Y1)] There exists $\Lambda\subset M$ with hyperbolic product structure, a countable collection of continuous subfamilies $\Gamma_i^s\subset\Gamma^s$ of stable disks and positive integers $\tau_i$, $i\in\mathbb{N}$ such that the $s$-subsets 
\begin{equation}
\Lambda_i^s:=\bigcup_{\gamma\in\Gamma^s_i}\,\bigl(\gamma\cap \Lambda\bigr)\subset\Lambda
\end{equation}
are pairwise disjoint and satisfy:
\begin{enumerate}
\item \emph{invariance}: for every $x\in\Lambda_i^s$ 
$$
f^{\tau_i}(\gamma^s(x))\subset\gamma^{s}(f^{\tau_i}(x)), \,\, f^{\tau_i}(\gamma^u(x))\supset\gamma^u(f^{\tau_i}(x)),
$$
where $\gamma^{u,s}(x)$ denotes the (un)stable disk containing $x$;
\item \emph{Markov property}: $\Lambda_i^u:=f^{\tau_i}(\Lambda_i^s)$ is a $u$-subset of 
$\Lambda$ such that for all $x\in\Lambda_i^s$ 
$$
\begin{aligned}
f^{-\tau_i}(\gamma^s(f^{\tau_i}(x))\cap\Lambda_i^u)
&=\gamma^s(x)\cap \Lambda,\\
f^{\tau_i}(\gamma^u(x)\cap\Lambda_i^s)
&=\gamma^u(f^{\tau_i}(x))\cap \Lambda.
\end{aligned}
$$
\end{enumerate}
\end{enumerate}
\vspace*{.5cm}
\begin{enumerate}
\item[(Y2)] The sets $\Lambda_i^u$ are pairwise disjoint.
\end{enumerate}
For any $x\in \Lambda^s_i$ define the \emph{inducing time} by $\tau(x):=\tau_i$ and the \emph{induced map} $\tilde{f}: \bigcup_{i\in\mathbb{N}}\Lambda_i^s\to\Lambda$ by
$$
\tilde{f}|_{\Lambda_i^s}:=f^{\tau_i}|_{\Lambda_i^s}.
$$
\begin{enumerate}
\item[(Y3)] There exists $0<a<1$ such that for any $i\in\mathbb{N}$ we have:
\begin{enumerate}
\item[(a)] For $x\in\Lambda_i^s$ and $y\in\gamma^s(x)$,
$$
d(\tilde{f}(x), \tilde{f}(y))\le a\, d(x,y);
$$
\item[(b)] For $x\in\Lambda^s_i$ and $y\in\gamma^u(x)\cap \Lambda_i^s$,
$$
d(x,y)\le a \, d(\tilde{f}(x), \tilde{f}(y)).
$$
\end{enumerate}
\end{enumerate}
For $x\in \Lambda$ let $Jac f(x)=\det |Df|_{E^u(x)}|$ and  
$Jac \tilde{f}(x)=\det |D\tilde{f}|_{E^u(x)}|$ denote the Jacobian of $Df|_{E^u(x)}$ and $D\tilde{f}|_{E^u(x)}$ respectively. 
\begin{enumerate}
\item[(Y4)] There exist $c>0$ and $0<\kappa<1$ such that:
\begin{enumerate}
\item[(a)] For all $n\ge 0$, $x\in \tilde{f}^{-n}(\cup_{i\in\mathbb{N}}\Lambda^s_i)$ 
and $y\in\gamma^s(x)$ we have
\[
\left|\log\frac{Jac \tilde{f}(\tilde{f}^{n}(x))}{Jac \tilde{f}(\tilde{f}^{n}(y))}\right|\le c\kappa^n;
\]
\item[(b)] For any $i_0,\dots, i_n\in\mathbb{N}$, 
$\tilde{f}^k(x),\tilde{f}^k(y)\in\Lambda^s_{i_k}$ for $0\le k\le n$ and  
$y\in\gamma^u(x)$ we have
\[
\left|\log\frac{ Jac \tilde{f}(\tilde{f}^{n-k}(x))}{Jac \tilde{f}(\tilde{f}^{n-k}(y))}\right|\le c\kappa^k.
\]
\end{enumerate}
\end{enumerate}

\begin{enumerate}
\item[(Y5)] For every $\gamma^u\in\Gamma^u$ one has
$$
\mu_{\gamma^u}(\gamma^u\cap \Lambda)>0, \quad \mu_{\gamma^u}\left((\overline{\Lambda\setminus\cup\Lambda_i^s)\cap\gamma^u}\right)=0,
$$
where $\mu_{\gamma^u}$ is the leaf volume on $\gamma^u$.
\end{enumerate}

\begin{enumerate}
\item[(Y6)] There exists $\gamma^u\in\Gamma^u$ such that 
$$
\sum_{i=1}^\infty \tau_i \mu_{\gamma^u}(\Lambda_i^s\cap \gamma^u) <\infty.
$$
\end{enumerate}
It is shown in \cite{THK} that the Katok map is a Young diffeomorphism. We will briefly outline the argument.

\subsection{A tower representation of the automorphism $A$} Consider a finite Markov partition $\tilde{\Px}$ for the automorphism $A$. Recall that by definition of Markov partitions, $\tilde{P}=\overline{\text{int}\tilde{P}}$ for any 
$\tilde{P}\in\tilde{\Px}$. Let $\tilde{P}\in\tilde{\Px}$ be a partition element which does not intersect any of the disks $D^i_{r_0}$, $i=1,2,3,4$. Given $\delta>0$, we can always choose the Markov partition $\tilde{\Px}$ in such a way that 
$\text{diam }(\tilde{P})<\delta$. For a point $x\in\tilde{P}$ denote by 
$\tilde{\gamma}^s(x)$ (respectively, $\tilde{\gamma}^u(x)$) the connected component of the intersection of $\tilde{P}$ with the stable (respectively, unstable) leaf of $x$, which contains $x$. We say that $\tilde{\gamma}^s(x)$ and $\tilde{\gamma}^u(x)$ are \emph{full length} stable and unstable curves through $x$.

Given $x\in\tilde{P}$, let $\tilde{\tau}(x)$ be the first return time of $x$ to 
$\text{int}\tilde{P}$. For all $x$ with $\tilde{\tau}(x)<\infty$ denote by
$$
\tilde{\Lambda}^s(x)=\bigcup_{y\in\tilde{U}^u(x)\setminus\tilde{A}^u(x)}\,\tilde{\gamma}^s(y),
$$ 
where $\tilde{U}^u(x)\subseteq\tilde{\gamma}^u(x)$ is an interval containing 
$x$ and open in the induced topology of $\tilde{\gamma}^u(x)$, and 
$\tilde{A}^u(x)\subset\tilde{U}^u(x)$ is the set of points which either lie on the boundary of the Markov partition or never return to the set $\tilde{P}$. Note that $\tilde{A}^u(x)$ has zero one-dimensional Lebesgue measure in $\tilde{\gamma}^u(x)$. One can choose 
$\tilde{U}^u(x)$ such that
\begin{enumerate}
\item for any $y\in\tilde{\Lambda}^s(x)$ we have $\tilde{\tau}(y)=\tilde{\tau}(x)$;
\item for any $y\in\tilde{P}$ such that $\tilde{\tau}(y)=\tilde{\tau}(x)$ we have 
$y\in\tilde{\Lambda}^s(x)$.
\end{enumerate}
Moreover, the image under $A^{\tilde{\tau}(x)}$ of $\tilde{\Lambda}^s(x)$ is a $u$-subset containing $A^{\tilde{\tau}(x)}(x)$. It is easy to see that for any $x,y\in\tilde{P}$ with finite first return time the sets $\tilde{\Lambda}^s(x)$ and $\tilde{\Lambda}^s(y)$ are either coincide or disjoint. Thus we have a countable collection of disjoint sets 
$\tilde{\Lambda}_i^s$ and numbers $\tilde{\tau}_i$ which give a representation of the automorphism $A$ as a Young diffeomorphism for which the set 
$$
\tilde{\Lambda}=\bigcup_{i\ge 1}\tilde{\Lambda}_i^s
$$ 
is the base of the tower, the sets $\tilde{\Lambda}_i^s$ are the $s$-sets and the numbers 
$\tilde{\tau}_i$ are the inducing times, see \cite{THK} for details. 

\subsection{A tower representation for the slow down map $f_{\T^2}$}\label{torustower} Applying the conjugacy map $H$, one obtains the element $P=H(\tilde{P})$ of the Markov partition $\mathcal{P}=H(\tilde{\mathcal{P}})$. Since the map $H$ is continuous, given 
$\varepsilon$, there is $\delta>0$ such that $\text{diam }(P)<\varepsilon$ for any $P\in\mathcal{P}$ provided $\text{diam }(\tilde{P})<\delta$. Further we obtain the set 
$\Lambda=H(\tilde{\Lambda})$, which has direct product structure given by the full length stable $\gamma^s(x)=H(\tilde{\gamma}^s(x))$ and unstable 
$\gamma^u(x)=H(\tilde{\gamma}^u(x))$ curves. We thus obtain a representation of the slow down map as a Young diffeomorphism for which $\Lambda_i^s=H(\tilde{\Lambda}_i^s)$ are $s$-sets, $\Lambda_i^u=H(\tilde{\Lambda}_i^u)=f_{\T^2}^{\tau_i}(\Lambda_i^s)$ are 
$u$-sets and the inducing times $\tau_i=\tilde{\tau}_i$ are the first return time to 
$\Lambda$. Note that for all $x$ with $\tau(x)<\infty$ 
$$
\Lambda^s(x)=\bigcup_{y\in U^u(x)\setminus A^u(x)}\,\gamma^s(y),
$$ 
where $U^u(x)=H(\tilde{U}^u(x))\subseteq\gamma^u(x)$ is an interval containing $x$ and open in the induced topology of $\gamma^u(x)$, and 
$A^u(x)=H(\tilde{A}^u(x))\subset U^u(x)$ is the set of points which either lie on the boundary of the Markov partition or never return to the set $P$. Note that $A^u(x)$ has zero one-dimensional Lebesgue measure in $\gamma^u(x)$.

In what follows we will always assume that a Markov partition and the slow down domain are chosen such that the following statement holds.
\begin{prop}\label{partition}
Given $Q>0$, one can choose a Markov partition $\mathcal{P}$ and the number $r_0$ in the construction of the map $f_{\T^2}$ such that 
\begin{enumerate} 
\item there is a partition element $P$ for which $f_{\T^2}^j(x)\notin D_{r_0}$ for any $0\le j\le Q$ and for any point $x$ for which either $x\in\Lambda$ or $x\notin f_{\T^2}(D_{r_0})$ while $f_{\T^2}^{-1}(x)\in D^i_{r_0}$ for some $i=1,2,3,4$.
\item if $P_i$ is the element of the Markov partition containing $x_i$, $i=1,2,3,4$, then $x_i\in D^i_{r_0}\subset\text{Int }P_i$.
\end{enumerate}
\end{prop}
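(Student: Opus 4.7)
The plan is to work at the level of the linear automorphism $A$, where the geometry is explicit, and then transport everything to $f_{\T^2}$ via the conjugacy $H$ of Proposition~\ref{smoothH}(1). The four fixed points of $A$ coincide with those of $f_{\T^2}$, and $H$ may be chosen so that $H(x_i)=x_i$ for $i=1,\dots,4$. Since $H$ is a homeomorphism sending any Markov partition of $A$ to a Markov partition of $f_{\T^2}$, conditions (1) and (2), which are purely topological (containment and disjointness), can be arranged for $A$ and pulled back.

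For condition (2), I would start from any classical Adler--Weiss Markov partition $\tilde{\mathcal P}_0$ of $A$ whose rectangles are aligned with the eigendirections of $A$, slightly translated so that no $x_i$ lies on the boundary of any rectangle, and then refine to
$$
\tilde{\mathcal P}_N:=\bigvee_{k=-N}^{N} A^{-k}(\tilde{\mathcal P}_0),
$$
which is again Markov. For $N$ large, every element has diameter below any prescribed $\varepsilon>0$, and each $x_i$ lies in the interior of a unique element $\tilde P_i$. Setting $P_i=H(\tilde P_i)$ and then taking $r_0$ small enough that $\overline{D^i_{r_0}}\subset\text{Int}(P_i)$ (possible because $H(x_i)=x_i$ and $H$ is continuous) secures (2).

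For condition (1), I would choose $\tilde P\in\tilde{\mathcal P}_N$ to be an element disjoint from the closed set
$$
\mathcal B_Q:=\bigcup_{j=0}^{Q} A^{-j}\bigl(H^{-1}(D_{r_0})\bigr).
$$
Since $A$ is area-preserving and $H^{-1}(D_{r_0})$ is an open neighborhood of $\{x_1,\dots,x_4\}$ whose area tends to $0$ as $r_0\to 0$ (by continuity of measure on a decreasing sequence with intersection $\{x_1,\dots,x_4\}$), we have $\mathrm{area}(\mathcal B_Q)\le (Q+1)\,\mathrm{area}(H^{-1}(D_{r_0}))<1$ for $r_0$ small. Hence $\T^2\setminus\mathcal B_Q$ is a nonempty open set, and enlarging $N$ if necessary produces an element $\tilde P$ contained in it. Setting $P=H(\tilde P)$ and using $H\circ A=f_{\T^2}\circ H$ immediately gives $f_{\T^2}^j(x)\notin D_{r_0}$ for every $x\in\Lambda\subset P$ and $0\le j\le Q$. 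The second alternative in (1) concerns a point $x$ that has just exited $D_{r_0}$ (reading the stated condition as $x\notin D_{r_0}$ with $f_{\T^2}^{-1}(x)\in D^i_{r_0}$); for such $x$ the iterates $f_{\T^2}^j(x)$ agree with $A^j(x)$ as long as they remain outside $D_{r_0}$, and by the linearity of $A$ the distance $|A^j(x)-x_i|$ grows at rate $\lambda^j$ in the unstable direction while $|A^j(x)-x_\ell|$ stays bounded below by half the minimum inter-fixed-point distance for $\ell\ne i$ and $j\le Q$, provided $r_0$ is small enough.

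The main obstacle is the simultaneous calibration of three scales: $Q$ is given; $r_0$ must be small enough to (i) place each $D^i_{r_0}$ in the interior of a partition element $P_i$, (ii) ensure $\mathrm{area}(\mathcal B_Q)<1$, and (iii) force the $Q$-step forward orbit of any exit point to avoid $D_{r_0}$; and $N$ must then be taken large enough that $\tilde{\mathcal P}_N$ is fine enough to fulfill (2) and to contain an element inside $\T^2\setminus\mathcal B_Q$. The quantifiers are handled in the order $Q\to r_0\to N$. Because $H$ is only a homeomorphism and not in general absolutely continuous, every quantitative estimate is made on the $A$-side, where $A$ is linear and area-preserving; only topological properties (containment, disjointness) are transported back through $H$, which is precisely what conditions (1) and (2) require.
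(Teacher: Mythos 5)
Your overall strategy---establish the statement for the linear automorphism $A$ and transport it through the conjugacy $H$---is precisely what the paper does; its entire proof is the single sentence ``observe that it holds for the automorphism $A$ and hence it remains to apply the conjugacy homeomorphism $H$,'' so your plan is faithful to the authors'. There is, however, a genuine gap in your quantifier ordering. For condition (2) you (correctly) use the order $N\to r_0$: fix the refined partition $\tilde{\mathcal P}_N$, hence the element $\tilde P_i\ni x_i$ with some margin, and then shrink $r_0$ until $D^i_{r_0}\subset\text{Int}\,P_i$. But for condition (1) you propose to ``enlarge $N$ if necessary'' \emph{after} $r_0$ has already been chosen, and you summarize the whole argument with the order $Q\to r_0\to N$. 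Refining the partition makes $\tilde P_i$ (and $P_i=H(\tilde P_i)$) strictly smaller, so enlarging $N$ at the end can push $\partial P_i$ inside $D^i_{r_0}$ and destroy condition (2). Your own phrase ``$N$ must then be taken large enough \dots\ to fulfill (2)'' is backwards: a finer partition makes the inclusion $D^i_{r_0}\subset\text{Int}\,P_i$ \emph{harder}, not easier. The workable order is $Q\to N\to r_0$: first fix the partition, so that $\tilde P_i$ has a fixed margin $\delta>0$ around $x_i$ and there is an element $\tilde P$ whose first $Q$ forward iterates under $A$ stay at distance $\ge\rho>0$ from all four fixed points; only then take $r_0$ so small that both $D_{r_0}$ and $\bigcup_{j=0}^{Q}A^{-j}\bigl(H^{-1}(D_{r_0})\bigr)$ sit inside the $\min(\delta,\rho)$-balls around the $x_i$'s.

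A second, subtler issue is your justification that $\text{area}\bigl(H^{-1}(D_{r_0})\bigr)\to 0$ ``by continuity of measure on a decreasing sequence.'' The family $H^{-1}(D_{r_0})$ is \emph{not} a nested family in $r_0$, because the conjugacy $H=H_{r_0}$ itself changes with $r_0$: it is part of the construction being tuned. The conclusion is correct, but the reason---which you should make explicit, as it is also what breaks the apparent circularity between choosing $r_0$ and choosing $\mathcal P=H(\tilde{\mathcal P})$---is that $f_{\T^2}$ agrees with $A$ outside $D_{r_0}\cup A^{-1}(D_{r_0})$ and is $C^0$-close to $A$ as $r_0\to 0$, so the conjugacy $H_{r_0}$ converges uniformly to the identity; combined with $\text{diam}(D_{r_0})\to 0$ this forces $H_{r_0}^{-1}(D_{r_0})$ into arbitrarily small neighborhoods of $\{x_1,\dots,x_4\}$, which is stronger than, and implies, the measure estimate you want.
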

To prove this proposition observe that it holds for the automorphism $A$ and hence, it remains to apply the conjugacy homeomorphism $H$.

\begin{prop}[\cite{THK}]\label{f is Young}
There exists $Q>0$ such that the collection of $s$-subsets $H(\Lambda_i^s)$ satisfies Conditions (Y1)-(Y6).
\end{prop}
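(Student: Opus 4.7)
The plan is to push the Young tower representation of the Anosov automorphism $A$ from Section 5.2 forward by the conjugacy $H$ of Proposition \ref{smoothH}(1) to obtain a representation for $f_{\T^2}$, using Proposition \ref{partition} to recover the uniform hyperbolic behavior that $f_{\T^2}$ loses in $D_{r_0}$. Setting $\Lambda := H(\tilde\Lambda)$, $\Lambda_i^s := H(\tilde\Lambda_i^s)$, $\tau_i := \tilde\tau_i$, and using that $H$ conjugates the dynamics and maps the invariant foliations of $A$ to those of $f_{\T^2}$ (Proposition \ref{smoothH}(3)), the Markov property, the hyperbolic product structure, and the pairwise disjointness of the $\Lambda_i^s$ and $\Lambda_i^u$ carry over by purely formal transfer, verifying (Y1) and (Y2). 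Condition (Y5) follows from the ergodicity of $f_{\T^2}$ (Proposition \ref{smoothH}(5)) together with the absolute continuity of its stable and unstable holonomies, which ensure that the non-returning points form a zero leaf-measure subset of each unstable disk in $\Lambda$.

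The core of the proof is (Y3) and (Y4), and this is exactly where Proposition \ref{partition} enters. For (Y3)(a), pick $x\in\Lambda_i^s$ and $y\in\gamma^s(x)$; by Proposition \ref{partition} the first $Q$ iterates of the orbit from $x$ to $\tilde f(x)$ stay in $\T^2\setminus D_{r_0}$, where $f_{\T^2}=A$ contracts the stable direction uniformly by $\lambda^{-1}$. The remaining iterates may lie in $D_{r_0}$, but the explicit flow \eqref{batata2} forces the stable coordinate $s_2$ to decay (since $\psi>0$ away from the fixed points). Composing, the contraction is bounded by $a := C\lambda^{-Q}$, which is less than one if $Q$ is large; (Y3)(b) follows by the symmetric argument for the unstable direction. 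For (Y4), combine the $C^{2+2\kappa}$ regularity of $f_{\T^2}$ (Proposition \ref{f is1plusbeta}) with the exponential contraction along stable leaves from (Y3) to obtain, by the standard telescoping argument, a bound of the form $\sum_{k\ge n} C\cdot d(f_{\T^2}^{k}(\tilde f^n x), f_{\T^2}^{k}(\tilde f^n y))^{2\kappa}$ on the log-Jacobian distortion. This series is geometrically summable and yields both (Y4)(a) (iterating forward along stables) and (Y4)(b) (iterating backward along unstables), with constant $\kappa$ essentially $\lambda^{-2\kappa Q}$.

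Condition (Y6) is the integrability of return times. For the Anosov map $A$ the first-return-time tail to $\tilde P$ decays exponentially, so $\sum \tilde\tau_i\,\mu_{\tilde\gamma^u}(\tilde\Lambda_i^s\cap\tilde\gamma^u)<\infty$ by standard hyperbolic estimates. Since $\gamma^u\subset P$ is bounded away from the fixed points by Proposition \ref{partition}(2), the conjugacy $H$ restricted to unstable leaves in $\Lambda$ is a bi-H\"older homeomorphism on a compact region disjoint from the singular set, hence the leaf measures $\mu_{\gamma^u}$ and $H_*\mu_{\tilde\gamma^u}$ are comparable, and finiteness transfers.

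The main obstacle is the distortion estimate (Y4) for those returns whose orbits dwell for arbitrarily many iterates inside $D_{r_0}$, where the second derivatives of $f_{\T^2}$ degenerate at the fixed points and a naive iterate-by-iterate estimate diverges. The resolution is that any two points on the same stable leaf already enter $D_{r_0}$ at distance at most $\lambda^{-Q}\,\mathrm{diam}(P)$ thanks to the initial uniformly hyperbolic segment guaranteed by Proposition \ref{partition}; the H\"older bound on $\log Jac(f_{\T^2}|E^u)$ then produces contributions that are geometrically small, and the degenerate iterates inside $D_{r_0}$ contribute a convergent geometric series rather than ruining the estimate.
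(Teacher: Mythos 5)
The paper itself offers no proof of this statement: it is stated as a proposition cited from \cite{THK}, and the surrounding Section~5.3 merely records the transfer of the Markov structure through $H$. Your overall architecture --- push the Anosov Young structure through the conjugacy, then use the $Q$-iterate buffer of Proposition~\ref{partition} to regain uniform contraction --- is indeed the approach of \cite{THK}. However, two of your steps contain genuine errors that go to the heart of why the Katok map is subtle.

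The most serious gap is your treatment of (Y6). You argue that because $\gamma^u\subset P$ is bounded away from the fixed points, $H$ restricted to the unstable leaves of $\Lambda$ is ``bi-H\"older on a compact set disjoint from the singular set,'' hence the leaf measures $\mu_{\gamma^u}$ and $H_*\mu_{\tilde\gamma^u}$ are comparable and the exponential tail for $A$ transfers to $f_{\T^2}$. This cannot be right, and the paper itself contradicts it: Lemmas~\ref{lowerbound} and~\ref{upperbound} establish that the tail $m(\tau>n)$ for $f_{\T^2}$ is \emph{polynomial}, squeezed between $C_8 n^{-(\gamma-1)}$ and $C_{11}n^{-(\gamma'-1)}$, whereas for $A$ it is exponential. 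A bi-H\"older homeomorphism with uniform exponents maps exponential tails to exponential tails, so your comparability claim is false; the conjugacy $H$ is only a homeomorphism (it intertwines a uniformly hyperbolic fixed point with an indifferent one, so it degenerates arbitrarily badly near $x_i$), and the leaf measures of cylinders $\Lambda_i^s\cap\gamma^u$ and $\tilde\Lambda_i^s\cap\tilde\gamma^u$ are not comparable in any uniform sense. The finiteness in (Y6) is a nontrivial fact that requires precisely the polynomial upper bound on the tail, i.e.\ estimates of the type in Lemma~\ref{upperbound} (driven by the orbit analysis in Lemma~\ref{lem5.2}), together with the hypothesis $\gamma'>2$; ``finiteness transfers from $A$'' is not an argument.

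A related, though less fatal, error appears in your resolution of the (Y4) difficulty. You state that the iterates dwelling inside $D_{r_0}$ ``contribute a convergent geometric series.'' They do not: inside the slowdown region the contraction of the stable separation is only polynomial in the dwell time (cf.\ Lemma~\ref{deltas2upp1}, $\Delta s_2(t)\le C_2\Delta s_2(0)\,t^{-\gamma'}$), which is exactly the mechanism producing polynomial rather than exponential decay of correlations. The sum still converges, but establishing this requires the quantitative flow estimates of Section~6 (or their analogues in \cite{THK}); it is not a geometric series and the convergence is not a formality. Relatedly, the bound you write, a sum over $k\ge n$, is not what (Y4)(a) asks for; (Y4)(a) bounds the single-return distortion $|\log(Jac\tilde f(\tilde f^n x)/Jac\tilde f(\tilde f^n y))|$, and the exponent $n$ enters only through the exponential contraction of $d(\tilde f^n x,\tilde f^n y)$ from (Y3). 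The content one must supply is a \emph{uniform} bound on the one-step distortion over returns of arbitrary length, which again hinges on summability of the polynomial contraction inside $D_{r_0}$.
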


\subsection{Lifting the slow down map to the tower} We define \emph{Young tower} with the base $\Lambda$ by setting 
$$
\hat{Y}=\{(x,k)\in\Lambda\times\N: 0\le k< \tau(x)\}.
$$ 
and the \emph{tower map} $\hat{f}:\hat{Y}\to\hat{Y}$ by $\hat{f}(x,k)=(x,k+1)$ if $k<\tau(x)-1$ and $\hat{f}(x,k)=(Fx,0)$ if $k=\tau(x)-1$ where 
$F:\Lambda\to\Lambda$ is the induced map and is given by $F(x)=f^{\tau(x)}(x)$. The map $\hat{f}$ is the lift of the slow down map to the tower and it preserves the \emph{lift measure} $\hat{m}=m\times\text{counting}/(\int_{\Lambda}\tau)$. We have that $\hat{f}$ is a measurable bijection from 
$\Lambda_i^s\times\{k-1\}$ to $\Lambda_i^s\times\{k\}$ for all 
$1\le k< \tau_i-1$ and from $\Lambda_i^s\times\{\tau_i-1\}$ to $\Lambda\times\{0\}$.

\section{Proof of Theorem \ref{mainthm1}: Technical Lemmas}

We establish here several technical results on the solutions of the nonlinear systems of differential equations \eqref{batata2}. Throughout this section we fix a number $0<\alpha<1$ and $i\in\{1,2,3,4\}$ and for simplicity we drop the index $i$ in the notation of the disk $D^i_r$. We also set $f:=f_{\T^2}$.

\begin{lem}[\cite{THK}, Lemma 5.1]\label{var-eq1}
For $s=(s_1,s_2)\in D_{\frac{r_0}{2}}$ let 
$$
d_{i,j}=d_{i,j}(s_1,s_2):=\frac{\partial^2 }{\partial s_i\partial s_j}s_2\psi(s_1^2+s_2^2).
$$ 
Then 
\[
\max_{i,j=1,2}\ \left|d_{i,j}\right|\le \frac{6\alpha}{r_0^\alpha}\, (s_1^2+s_2^2)^{\alpha-\frac12}.
\]
\end{lem}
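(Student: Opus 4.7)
Since $(s_1, s_2) \in D_{r_0/2}$ and $r_0 < 1$, the argument $u := s_1^2 + s_2^2$ satisfies $u \le r_0^2/4 \le r_0/2$, so by property (K4) we have $\psi(u) = r_0^{-\alpha} u^{\alpha}$ throughout this disk. Consequently the function being differentiated reduces to the elementary expression
\[
g(s_1, s_2) \;=\; s_2\, \psi(s_1^2 + s_2^2) \;=\; r_0^{-\alpha}\, s_2\, (s_1^2 + s_2^2)^{\alpha},
\]
which is $C^{\infty}$ on $D_{r_0/2} \setminus \{0\}$. The plan is simply to compute the three distinct second partial derivatives $d_{1,1}$, $d_{1,2}$, $d_{2,2}$ directly and read off the claimed bound.

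Applying the product and chain rules and collecting terms, each partial derivative takes the uniform form
\[
d_{i,j} \;=\; 2\alpha\, r_0^{-\alpha}\, s_{k}\, (s_1^2 + s_2^2)^{\alpha - 2}\, Q_{i,j}(s_1, s_2),
\]
with $k \in \{1, 2\}$ and $Q_{i,j}$ an explicit homogeneous quadratic form. A short calculation yields $Q_{1,1} = (2\alpha - 1) s_1^2 + s_2^2$, $Q_{1,2} = s_1^2 + (2\alpha - 1) s_2^2$, and $Q_{2,2} = 3 s_1^2 + (1 + 2\alpha) s_2^2$. Because $0 < \alpha < 1$ gives $|2\alpha - 1| \le 1$ and $1 + 2\alpha \le 3$, in every case $|Q_{i,j}(s_1, s_2)| \le 3(s_1^2 + s_2^2)$.

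Combining this with the trivial estimate $|s_{k}| \le (s_1^2 + s_2^2)^{1/2}$ yields
\[
|d_{i,j}| \;\le\; 2\alpha\, r_0^{-\alpha} \cdot (s_1^2 + s_2^2)^{1/2} \cdot (s_1^2 + s_2^2)^{\alpha - 2} \cdot 3(s_1^2 + s_2^2) \;=\; \frac{6\alpha}{r_0^\alpha}\, (s_1^2 + s_2^2)^{\alpha - 1/2},
\]
which is the claimed inequality; the bound is tight only for $d_{2,2}$, where the coefficient $3$ in $Q_{2,2}$ is attained. No substantive obstacle is anticipated: the only point requiring care is sign bookkeeping in the cross terms $2(\alpha - 1) s_i^2 (s_1^2 + s_2^2)^{\alpha - 2}$, which have opposite sign to the leading $(s_1^2 + s_2^2)^{\alpha - 1}$ term and produce the partial cancellations responsible for $Q_{1,1}$ and $Q_{1,2}$ being strictly smaller than $Q_{2,2}$ in absolute value.
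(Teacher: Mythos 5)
Your proof is correct. The paper states this lemma without reproducing a proof, citing it to \cite{THK}, Lemma 5.1, so the comparison is with the reference rather than with an argument in this text; your direct computation is the natural route and I have verified each step. A few remarks confirming the details: you rightly observe that on $D_{r_0/2}$ one has $u=s_1^2+s_2^2\le r_0^2/4\le r_0/2$ (since $0<r_0<1$), so (K4) gives $\psi(u)=r_0^{-\alpha}u^{\alpha}$ throughout. Your quadratic forms are exactly correct: $d_{1,1}=2\alpha r_0^{-\alpha}s_2u^{\alpha-2}\bigl[(2\alpha-1)s_1^2+s_2^2\bigr]$, $d_{1,2}=2\alpha r_0^{-\alpha}s_1u^{\alpha-2}\bigl[s_1^2+(2\alpha-1)s_2^2\bigr]$, and $d_{2,2}=2\alpha r_0^{-\alpha}s_2u^{\alpha-2}\bigl[3s_1^2+(1+2\alpha)s_2^2\bigr]$, and since $0<\alpha<1$ gives $|2\alpha-1|\le1$ and $1+2\alpha<3$, each bracket is bounded in absolute value by $3u$; combined with $|s_k|\le u^{1/2}$ this yields the stated bound with the constant $6\alpha/r_0^{\alpha}$. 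The only minor imprecision is your closing remark that the bound is ``tight'' for $d_{2,2}$: since $1+2\alpha<3$ strictly for $\alpha<1$, and the case $s_2=0$ (which would make $3s_1^2=3u$) also kills the prefactor $s_2$, equality is never actually attained; what is true is that $d_{2,2}$ is the term that forces the constant $6$, which is the point you intended.
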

Consider a solution $s(t)=(s_1(t),s_2(t))$ of  Equation~\eqref{batata2} with an initial condition $s(0)=(s_1(0),s_2(0))$. Assume it is defined on the {\it maximal} time interval $[0,T]$ for which $f^{-1}(s(0))\notin D_{\frac{r_0}{2}}$ and $f(s(T))\notin D_{\frac{r_0}{2}}$ but $s(t)\in D_{\frac{r_0}{2}}$ for all $0\le t\le T$. In particular, $s_1(t)\neq 0$ and 
$s_2(t)\neq 0$. Setting $T_1=\frac{T}{2}$ we have that $s_1(t)\le s_2(t)$ for all $0\le t\le T_1$ and $s_1(t)\ge s_2(t)$ for all $T_1\le t\le T$. The following statement provides effective lower and upper bounds on the functions $s_1(t)$ and $s_2(t)$. 

\begin{lem}\label{lem5.2}
The following statements hold: 
$$
\begin{aligned}
|s_2(t)|&\ge |s_2(a)|\left(1+2^\alpha\,C_1 \,s_2^{2\alpha}(a)\,(t-a)\right)^{-\frac{1}{2\alpha}},& 0\le  a\le t\le T_1;\\
|s_2(t)|&\le |s_2(a)|\left(1+C_1\,s_2^{2\alpha}(a)\,(t-a)\right)^{-\frac{1}{2\alpha}},
& 0\le  a\le t\le T;\\
|s_1(t)|&\ge |s_1(b)| \left( 1 + 2^\alpha C_1 s_1^{2\alpha}(b)(b - t) \right)^{-\frac{1}{2\alpha}}, 
&  T_1 \le t\le b \le T;\\
|s_1(t)|&\le |s_1(b)| \left( 1 + C_1 s_1^{2\alpha}(b)(b - t) \right)^{-\frac{1}{2\alpha}}, 
& 0 \le t\le  b \le T;\\
|s_1(t)|&\le |s_1(T_1)|\left(1-2^\alpha\,C_1\, s_1^{2\alpha}(T_1)\,(t-T_1)\right)^{-\frac{1}{2\alpha}},& T_1\le t\le T. 
\end{aligned}
$$
where $C_1=\frac{2\alpha \log\lambda}{r_0^\alpha}$ is a constant. 
\end{lem}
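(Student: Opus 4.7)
The plan is to reduce each of the five inequalities to a comparison with the separable Bernoulli-type ODE $\dot y = \pm C y^{2\alpha+1}$, whose solutions are exactly the power-type functions appearing in the statement. Since the trajectory remains inside $D_{r_0/2}$ throughout $[0,T]$, condition (K4) lets us substitute $\psi(u) = (u/r_0)^\alpha$ directly, and the system \eqref{batata2} becomes
$$\frac{d|s_1|}{dt} = |s_1|\,\psi(s_1^2+s_2^2)\log\lambda, \qquad \frac{d|s_2|}{dt} = -|s_2|\,\psi(s_1^2+s_2^2)\log\lambda.$$
In particular $|s_1|$ is monotonically increasing and $|s_2|$ is monotonically decreasing throughout $[0,T]$, consistent with $T_1$ being the crossing time at which $|s_1(T_1)| = |s_2(T_1)|$.

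On $[0,T_1]$ the inequality $|s_1|\le|s_2|$ gives $|s_2|^2\le s_1^2+s_2^2\le 2|s_2|^2$, hence
$$\frac{|s_2|^{2\alpha}}{r_0^\alpha} \le \psi(s_1^2+s_2^2) \le \frac{2^\alpha|s_2|^{2\alpha}}{r_0^\alpha}.$$
Inserting this into the ODE for $|s_2|$ and applying the chain rule to $w(t) := |s_2(t)|^{-2\alpha}$ transforms the two-sided estimate into $C_1 \le \dot w(t) \le 2^\alpha C_1$ where $C_1 = 2\alpha\log\lambda/r_0^\alpha$. Integrating over $[a,t]$ and inverting the substitution recovers the first inequality (from the upper bound on $\dot w$) and yields the second inequality on $[0,T_1]$ (from the lower bound on $\dot w$). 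The second inequality then extends to all of $[0,T]$ because the estimate $\psi(s_1^2+s_2^2)\ge|s_2|^{2\alpha}/r_0^\alpha$ is valid without any assumption on which coordinate dominates.

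The inequalities for $|s_1|$ are obtained symmetrically on $[T_1,T]$, where $|s_1|\ge|s_2|$ implies the analogous two-sided bound $|s_1|^{2\alpha}/r_0^\alpha\le\psi(s_1^2+s_2^2)\le 2^\alpha|s_1|^{2\alpha}/r_0^\alpha$. Setting $w(t):=|s_1(t)|^{-2\alpha}$ and integrating backward from $b$ to $t$ produces the third and fifth inequalities, and the fourth inequality extends to $0\le t\le b\le T$ by the universal lower estimate on $\psi$. The only subtle point, and the one I expect will require the most care in writing, is the bookkeeping of integration direction (forward for the decreasing variable $|s_2|$, backward for the increasing variable $|s_1|$) together with the reversal of inequalities under the substitution $y\mapsto y^{-2\alpha}$; once these signs are correctly aligned, each of the five bounds follows from a single integration.
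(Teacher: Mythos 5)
Your proposal is correct and uses the same basic device as the cited source: on $[0,T_1]$ (resp.\ $[T_1,T]$) the bound $|s_1|\le|s_2|$ (resp.\ $\ge$) together with $(K4)$ gives $|s_i|^{2\alpha}/r_0^\alpha\le\psi(s_1^2+s_2^2)\le 2^\alpha|s_i|^{2\alpha}/r_0^\alpha$, and the Bernoulli substitution $w=|s_i|^{-2\alpha}$ converts the resulting differential inequality into $C_1\le |\dot w|\le 2^\alpha C_1$, which integrates immediately (with the sign bookkeeping you correctly flag: backward integration for the increasing variable $|s_1|$ and a reversal of inequalities under $y\mapsto y^{-2\alpha}$). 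The paper itself gives no proof here — it cites Lemma~5.2 of the erratum to \cite{THK} — so there is no alternative argument to compare against. The one refinement I would add: for the fifth estimate, the integrated bound $w(t)\ge w(T_1)-2^\alpha C_1(t-T_1)$ yields the stated power form only where the right-hand side is positive, i.e.\ where $2^\alpha C_1\, s_1^{2\alpha}(T_1)(t-T_1)<1$; the inequality should be read with that proviso attached, which is what the paper in fact relies on in the proof of Lemma~\ref{deltas2low3}.
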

\begin{proof} The first four estimated are proven in \cite{THK} (see Lemma 5.2 in the erratum to the paper) and the fifth estimate can be easily shown in the same manner as in \cite{THK}. 
\end{proof}
Consider another solution $\tilde{s}(t)=(\tilde{s}_1(t),\tilde{s}_2(t))$ of Equation \eqref{batata2} satisfying an initial condition $\tilde{s}(0)=(\tilde{s}_1(0),\tilde{s}_2(0))$. For $i=1,2$, we set 
$$
\Delta s_i(t)=\tilde{s}_i(t)-s_i(t).
$$
For the proof of the next result see \cite{THK}, Lemma 5.3 and the erratum to the paper. 
\begin{lem}\label{deltas2upp}
Given $0<\mu<1$. Assume that $s_1(t)\neq 0\neq s_2(t)$ and that
\begin{enumerate}
\item $\Delta s_2(t)>0$ and $|\Delta s_1(t)|\le\mu\Delta s_2(t)$ for $t\in[0,T]$; 
\item $\Big|\frac{\Delta s_2}{s_2}(0)\Big|<\frac{1-\mu}{72}$.
\end{enumerate}
Then 
$$
\begin{aligned}
\Delta s_2(t)&\le\frac{\Delta s_2(0)}{s_2(0)}s_2(t)\left(1+2^\alpha\,C_1\, s_2^{2\alpha}(0)\,t\,\right)^{-\beta'},& 0\le t\le T_1;\\
\Delta s_2(t) & \le \frac{\Delta s_2(T_1)}{s_1(T_1)}s_1(t)
\left( \frac{1 + 2^\alpha C_1 s_1^{2\alpha}(b)(b - t)}{1 + 2^\alpha C_1 s_1^{2\alpha}(b)(b - T_1)} \right)^{\beta'}, & T_1 \le t \le b\le T,
\end{aligned}
$$
where $\beta'=\frac{1-\mu}{2^{\alpha+2}}$ and $C_1$ is the constant in Lemma~\ref{lem5.2}. In addition, 
$$
\|\Delta s(T)\|\le\sqrt{1+\mu^2}\ \frac{s_1(T)}{s_2(0)}\ \|\Delta s(0)\|. 
$$
\end{lem}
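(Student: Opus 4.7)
The plan is to derive a Gr\"onwall-type differential inequality for the ratio $\eta(t):=\Delta s_2(t)/s_2(t)$ on $[0,T_1]$ (and for $\Delta s_2(t)/s_1(t)$ on $[T_1,T]$), and integrate it using the explicit lower bounds on the trajectories furnished by Lemma \ref{lem5.2}. The overall scheme follows Lemma 5.3 of \cite{THK}, but the argument must be split at $t=T_1$ because the dominant coordinate switches there (which is both the midpoint of $[0,T]$ and the crossover $s_1(T_1)=s_2(T_1)$, since $s_1s_2$ is a constant of motion for \eqref{batata2}).

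Concretely, subtracting the $s_2$-components of \eqref{batata2} for $\tilde s$ and $s$ and Taylor expanding the right-hand side about $s(t)$ yields
\[
\dot{\Delta s}_2 = -\log\lambda\,\bigl[(\psi+2s_2^2\psi')\Delta s_2 + 2s_1 s_2\psi'\Delta s_1\bigr] + E(t),
\]
with $\psi,\psi'$ evaluated at $s_1^2+s_2^2$ and $|E(t)|\le C\,(s_1^2+s_2^2)^{\alpha-\frac12}\|\Delta s\|^2$ by Lemma \ref{var-eq1}. On $[0,T_1]$ the hypotheses $s_1\le s_2$ and $|\Delta s_1|\le\mu\Delta s_2$ turn the bracketed linear part into a strictly negative multiple of $\psi\,\Delta s_2$ with coefficient of order $(1-\mu)$; dividing by $s_2$ and using $\dot s_2/s_2=-\psi\log\lambda$, the $\psi\,\Delta s_2/s_2$ contributions partially cancel, leaving a differential inequality whose leading part is $\eta' \le -c(\alpha,\mu)\log\lambda\cdot\psi(s_1^2+s_2^2)\,\eta$; the Taylor remainder $E(t)/s_2$ is absorbed into this linear term through a bootstrap based on hypothesis (2). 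Integrating and inserting $\int_0^t \psi(s_1^2+s_2^2)\,d\tau \ge (2^\alpha C_1 r_0^\alpha)^{-1}\log\bigl(1+2^\alpha C_1 s_2^{2\alpha}(0)\,t\bigr)$, which comes from the lower bound on $s_2(t)$ in Lemma \ref{lem5.2}, delivers the first claimed inequality with $\beta'=(1-\mu)/2^{\alpha+2}$.

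On $[T_1,T]$ the coordinates swap roles ($s_1\ge s_2$), so the same recipe, now dividing by $s_1(t)$ and using the lower bound on $s_1(t)$ from Lemma \ref{lem5.2} anchored at the right endpoint $b$, produces the second inequality. Finally, hypothesis (1) gives $\|\Delta s(T)\|^2 \le (1+\mu^2)\Delta s_2(T)^2$, and chaining the two intermediate bounds at $t=T_1$ telescopes the ratio factors via $s_2(T_1)=s_1(T_1)$, leaving exactly $\|\Delta s(T)\|\le\sqrt{1+\mu^2}\,(s_1(T)/s_2(0))\,\|\Delta s(0)\|$. The main technical obstacle is the bootstrap step: one must verify a priori that $\eta(t)$ stays small enough along the whole orbit segment for the linear part of the variational equation to dominate the Taylor remainder, and this is precisely what pins down the sharp constants $1-\mu$, $\tfrac{1-\mu}{72}$ and $\beta'=(1-\mu)/2^{\alpha+2}$ that appear in the statement.
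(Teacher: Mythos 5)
The paper does not give its own proof of this lemma; it defers to \cite{THK}, Lemma~5.3 and its erratum. Your sketch correctly reconstructs that argument and is the mirror image (for upper rather than lower bounds) of the detailed proof the paper does include for the companion Lemma~\ref{deltas2low1}: the Gr\"onwall inequality for $\Delta s_2/s_2$ on $[0,T_1]$ and $\Delta s_2/s_1$ on $[T_1,T]$, the Taylor remainder controlled by Lemma~\ref{var-eq1}, the bootstrap via hypothesis~(2) that pins down $\beta'$, and the integration using the trajectory bounds of Lemma~\ref{lem5.2}.
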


\begin{lem}\label{deltas2upp1}
Under the assumptions of Lemma \ref{deltas2upp} for any $0<t<T_1$ we have that
$$
\Delta s_2(t)< C_2\Delta s_2(0)t^{-\gamma'},
$$
where $C_2>0$ and $\gamma'=\frac1{2\a}+\frac{1-\mu}{2^{\a+2}}$ (see \eqref{gamma-gamma'}).
\end{lem}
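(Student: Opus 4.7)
The plan is to combine the first displayed inequality of Lemma~\ref{deltas2upp} (taken with $a=0$) with the bound on $|s_2(t)|$ from Lemma~\ref{lem5.2}, and then extract the $t^{-\gamma'}$ decay using the elementary estimate $(1+x)^{-b}\le x^{-b}$ valid for all $x,b>0$.

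First, for $0\le t\le T_1$ Lemma~\ref{deltas2upp} gives
\[
\Delta s_2(t)\le\frac{\Delta s_2(0)}{s_2(0)}\,s_2(t)\bigl(1+2^\alpha C_1 s_2^{2\alpha}(0)t\bigr)^{-\beta'},
\]
while the second inequality of Lemma~\ref{lem5.2} gives $|s_2(t)|\le|s_2(0)|(1+C_1 s_2^{2\alpha}(0)t)^{-\frac1{2\alpha}}$. Substituting one into the other and then applying $(1+x)^{-b}\le x^{-b}$ to both factors yields
\[
\Delta s_2(t)\le\Delta s_2(0)\bigl(C_1 s_2^{2\alpha}(0)t\bigr)^{-\frac1{2\alpha}}\bigl(2^\alpha C_1 s_2^{2\alpha}(0)t\bigr)^{-\beta'}=2^{-\alpha\beta'}\bigl(C_1 s_2^{2\alpha}(0)\bigr)^{-\gamma'}\Delta s_2(0)\,t^{-\gamma'},
\]
where I used the identity $\gamma'=\frac1{2\alpha}+\beta'$ from \eqref{gamma-gamma'}.

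It remains to absorb the $s_2(0)$-dependent factor into a constant. The maximality hypothesis of Lemma~\ref{lem5.2} forces $f^{-1}(s(0))\notin D_{r_0/2}$, and since $f$ is a homeomorphism fixing the origin (Proposition~\ref{smoothH}), continuity of $f^{-1}$ at $0$ produces a constant $\delta>0$ depending only on $r_0$ and $f$ such that $\|s(0)\|\ge\delta$. On $[0,T_1]$ we have $|s_1(\cdot)|\le|s_2(\cdot)|$, so at $t=0$ this gives $|s_2(0)|\ge\delta/\sqrt2$. Hence $\bigl(C_1 s_2^{2\alpha}(0)\bigr)^{-\gamma'}\le\bigl(C_1(\delta/\sqrt2)^{2\alpha}\bigr)^{-\gamma'}$, and setting
\[
C_2:=2^{-\alpha\beta'}\bigl(C_1(\delta/\sqrt2)^{2\alpha}\bigr)^{-\gamma'}
\]
completes the proof (the non-strict inequality one obtains can be upgraded to the strict form of the statement by slightly enlarging $C_2$, or by noting that $(1+x)^{-b}<x^{-b}$ is strict for $x>0$).

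The only delicate step is the last one — verifying that $|s_2(0)|$ is bounded below by a trajectory-independent constant. Algebraically the estimate is immediate from the two previous lemmas, but one has to make sure that the constant $C_2$ is genuinely a constant depending on $\alpha,\mu,r_0,f$ alone, which rests on the maximal-interval setup and the observation that the slow-down map $f^{-1}$ cannot move a point arbitrarily close to a fixed point out of $D_{r_0/2}$.
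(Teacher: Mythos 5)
Your proof is correct and follows essentially the same route as the paper's: combine the first estimate of Lemma~\ref{deltas2upp} (with $a=0$) with the second estimate of Lemma~\ref{lem5.2}, then peel off the ``$1+$'' to produce a pure power of $t$. The only superficial difference is that you apply $(1+x)^{-b}\le x^{-b}$ to each factor separately (picking up a harmless extra $2^{-\alpha\beta'}$), whereas the paper first merges the two factors into $(1+C_1 s_2^{2\alpha}(0)t)^{-\gamma'}$ and drops the $1$ once; your spelled-out justification that $|s_2(0)|$ is bounded below (the paper simply says ``$s_2(0)$ is of order $r_0$'') is a welcome amplification but not a different argument.
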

\begin{proof} By Lemma \ref{lem5.2} (the second estimate), one has
$$
s_2(t)\le s_2(0)(1+C_1s_2^{2\a}(0)t)^{-\frac1{2\a}}.
$$
Therefore, Lemma \ref{deltas2upp} implies that
$$
\begin{aligned}
\Delta s_2(t)&\le\frac{\Delta s_2(0)}{s_2(0)}s_2(t)\left(1+2^\alpha\,C_1\, s_2^{2\alpha}(0)\,t\,\right)^{-\beta'}\\
&\le\Delta s_2(0)\left(1+C_1\, s_2^{2\alpha}(0)\,t\,\right)^{-\gamma'}.
\end{aligned}
$$
Since $(1+C_1 s_2^{2\alpha}(0))t>C_1 s_2^{2\alpha}(0)t$, we have
$$
\Delta s_2(t)\le \Delta s_2(0)(C_1s_2^{2\a}(0))^{-\gamma'}t^{-\gamma'}
$$
and the desired estimate follows, since $s_2(0)$ is of order $r_0$. 
\end{proof}

\begin{lem}\label{deltas2upp2} Under the assumptions of Lemma \ref{deltas2upp} there is 
$C_3>0$ such that for all $T_1<t<T$ we have 
$$
\Delta s_2(t)<C_3\Delta s_2(T_1).
$$
\end{lem}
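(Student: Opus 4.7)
The key observation is that the slow-down system \eqref{batata2} admits the conserved quantity $s_1 s_2$: direct differentiation gives $\tfrac{d}{dt}(s_1 s_2) = s_1 s_2 \psi \log\lambda - s_1 s_2 \psi \log\lambda = 0$. Consequently both the reference orbit $s(t)$ and the perturbed orbit $\tilde s(t) := s(t) + \Delta s(t)$ preserve the product of their two coordinates. Subtracting the two conservation identities and comparing values at times $t$ and $T_1$ (where $\sigma := s_1(T_1) = s_2(T_1)$) produces the exact identity
\[
\tilde s_1(t)\,\Delta s_2(t) = \sigma\bigl[\Delta s_1(T_1) + \Delta s_2(T_1)\bigr] + \Delta s_1(T_1)\Delta s_2(T_1) - s_2(t)\Delta s_1(t).
\]

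The plan is to convert this identity into the desired bound. Applying the standing hypothesis $|\Delta s_1| \le \mu \Delta s_2$ together with $s_2(t) \le s_1(t)$ and $s_1(t) \ge \sigma$ on $[T_1, T]$ bounds the right-hand side by $\sigma(1+\mu)\Delta s_2(T_1) + \mu\Delta s_2(T_1)^2 + \mu s_1(t)\Delta s_2(t)$. For the denominator $\tilde s_1(t)$, I would invoke Lemma~\ref{deltas2upp}: its first inequality at $t = T_1$, combined with hypothesis~(2) of that lemma, yields $\Delta s_2(T_1)/\sigma < (1-\mu)/72$; its second inequality, applied with $b = t$, then gives $\Delta s_2(t)/s_1(t) < (1-\mu)/72$ throughout $[T_1, T]$. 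Hence $|\Delta s_1(t)|/s_1(t) \le \mu(1-\mu)/72 < 1/72$, so $\tilde s_1(t) \ge (71/72) s_1(t) \ge (71/72)\sigma > 0$, and the identity can be divided through by $\tilde s_1(t)$.

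Performing the division, using $s_1(t) \ge \sigma$ to replace $1/s_1(t)$ by $1/\sigma$, absorbing $\mu s_1(t)\Delta s_2(t)/\tilde s_1(t)$ to the left (so the coefficient of $\Delta s_2(t)$ becomes $1 - 72\mu/71$, which is positive since $\mu < 1/2$), and handling the quadratic term $\mu\Delta s_2(T_1)^2$ via $\Delta s_2(T_1)/\sigma < (1-\mu)/72$, one obtains an explicit estimate
\[
\Delta s_2(t) \le \frac{72(1+\mu) + \mu(1-\mu)}{71 - 72\mu}\,\Delta s_2(T_1),
\]
which gives the desired constant $C_3 = C_3(\mu)$. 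The only real obstacle is the bookkeeping required to keep $\tilde s_1(t)$ uniformly bounded away from zero in the denominator, and this is handled by the smallness hypothesis on $\Delta s_2(0)/s_2(0)$ together with Lemma~\ref{deltas2upp}; once that is in place, the rest of the argument reduces to elementary manipulations of the identity above.
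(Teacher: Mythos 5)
Your proof is correct and takes a genuinely different route from the paper's. Both proofs exploit the invariance of $s_1 s_2$ under the flow \eqref{batata2}, but they use it in different ways. The paper uses the conservation law indirectly: it derives the differential inequality $\frac{ds_2}{dt}\le -\log\lambda\, s_2(2\kappa_0)^\alpha$, integrates it over $[0,T_1]$ to control $s_1^{2\alpha}(T_1)$, and then couples this with the fifth inequality of Lemma~\ref{lem5.2} to bound $\left(1-2^\alpha C_1 s_1^{2\alpha}(T_1)(t-T_1)\right)^{-1/(2\alpha)}$ uniformly. You instead use the conservation law to derive the exact algebraic identity $\tilde s_1(t)\,\Delta s_2(t)+s_2(t)\,\Delta s_1(t)=\text{const}$, evaluate it at $t=T_1$ (where $s_1=s_2=\sigma$), and convert it into the desired bound by elementary manipulations. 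The only external inputs you need are the monotone decay of $\Delta s_2/s_2$ on $[0,T_1]$ and of $\Delta s_2/s_1$ on $[T_1,T]$ — consequences of Lemma~\ref{deltas2upp} that are significantly weaker than what the paper's argument invokes. What your approach buys is an explicit and transparent constant $C_3(\mu)=\bigl(72(1+\mu)+\mu(1-\mu)\bigr)/(71-72\mu)$, and it bypasses entirely the need to control $s_1^{2\alpha}(T_1)(t-T_1)$ uniformly along the trajectory, which is the delicate step in the paper's version (and arguably the place where the paper's estimate $1-2^\alpha C_1 s_1^{2\alpha}(T_1)(t-T_1)\ge 1-C_0$ deserves additional justification for trajectories with small $\kappa_0$). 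What you give up is that your argument is specific to this exact balance of terms, whereas the paper's differential-inequality route follows the same template used throughout Section 6.
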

\begin{proof}
Let $s(t)=(s_1(t), s_2(t))$ be the solution of \eqref{batata2} with $s_1(t)s_2(t)=\kappa_0,$ where $\kappa_0>0$ is a constant. Then we have
$$\frac{ds_2}{dt}=-\log\lambda s_2(s_1^2+s_2^2)^{\a}\le -\log\lambda s_2(2\kappa_0)^{\a}.$$
By integrating the above over $[0,T_1]$, we obtain
$$
\log\frac{s_2(T_1)}{s_2(0)}\le-\log\,\lambda(2\kappa_0)^{\a}T_1.
$$
It follows that
$$
\begin{aligned}
1-2^\alpha C_1 s_1^{2\alpha}(T_1)(t-T_1)&=1-2^\alpha C_1 s_2^{2\alpha}(T_1)(t-T_1)\\
&\ge 1-2^\alpha C_1 s_1^{2\alpha}(0)\exp(-2\a\log\lambda(2\kappa_0)^{\a}T_1)(t-T_1)\\
&\ge  1-2^\alpha C_1 s_1^{2\alpha}(0)\exp(-2\a\log\lambda(2\kappa_0)^{\a}T_1)T_1\\
&\ge 1-C_0>0
\end{aligned}
$$
for some $C_0>0$. Therefore, the second estimate in Lemma \ref{deltas2upp}, the fifth inequality in Lemma \ref{lem5.2}, and the assumption that $T_1<t<T$ imply
$$
\begin{aligned}
\frac{\Delta s_2(t)}{\Delta s_2(T_1)}&\le \left(1-2^\alpha C_1 s_1^{2\alpha}(T_1)(t-T_1)\right)^{-\frac{1}{2\a}}\left( \frac{1 + 2^\alpha C_1 s_1^{2\alpha}(b)(b - t)}{1 + 2^\alpha C_1 s_1^{2\alpha}(b)(b - T_1)} \right)^{\beta'}\\
&\le (1-C_0)^{-\frac{1}{2\a}},
\end{aligned}
$$
which proves the desired estimate.
\end{proof}
\begin{lem}\label{deltas2low1}Under the assumptions of Lemma \ref{deltas2upp} one has
\begin{align*}
\Delta s_2(t)&\ge\frac{\Delta s_2(0)}{s_2(0)}s_2(t)\left(1+C_1\, s_2^{2\alpha}(0)\,t\,\right)^{-\beta},& 0\le t\le T_1;\\
\Delta s_2(t)&\ge\frac{\Delta s_2(T_1)}{s_1(T_1)}s_1(t)\left(1+C_1\, s_1^{2\alpha}(T_1)\,(t-T_1)\right)^{-\beta_1}, & T_1 \le t \le T, 
\end{align*}
where 
$$
\beta=(1+\mu)2^{\a-1}+\frac{1-\mu}{6} \text{ and } \beta_1=2^{\alpha-1}(1+\mu)+\frac{2^\alpha}{\a}+\frac{1-\mu}{6}.
$$
\end{lem}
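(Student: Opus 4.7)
Plan: I would mirror the proof structure of Lemma \ref{deltas2upp} to produce the matching lower bounds. Writing $F(s_1,s_2) = -s_2\log\lambda\,\psi(s_1^2+s_2^2)$ so that $\dot s_2 = F(s)$, subtracting the equations for $\tilde s$ and $s$ gives the variational equation $\Delta\dot s_2 = F(\tilde s) - F(s) = \partial_{s_1}F\cdot\Delta s_1 + \partial_{s_2}F\cdot\Delta s_2 + R$, with $R$ the second-order Taylor remainder controlled by Lemma \ref{var-eq1}. In $D_{r_0/2}$ one has $\psi(u)=(u/r_0)^\alpha$, so direct differentiation yields
$$\partial_{s_2}F = -\log\lambda\,\psi\,\Bigl(1+\tfrac{2\alpha s_2^2}{s_1^2+s_2^2}\Bigr),\qquad \partial_{s_1}F = -\tfrac{2\alpha\log\lambda\,\psi\,s_1s_2}{s_1^2+s_2^2}.$$

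Using $\Delta s_2>0$, $|\Delta s_1|\le\mu\Delta s_2$, and $|s_1s_2|/(s_1^2+s_2^2)\le\tfrac12$, the next step is to combine these into the differential inequality
$$\frac{\Delta\dot s_2}{\Delta s_2}-\frac{\dot s_2}{s_2}\ge -\log\lambda\,\psi\,\Bigl(\tfrac{2\alpha s_2^2}{s_1^2+s_2^2}+\alpha\mu\Bigr) + \frac{R}{\Delta s_2}.$$
The remainder is absorbed by invoking Lemma \ref{var-eq1} to bound the second derivatives of $F$ and Lemma \ref{deltas2upp} to ensure $\Delta s_2 = O(s_2)$; together with the smallness hypothesis $|\Delta s_2(0)/s_2(0)|<(1-\mu)/72$ this forces $|R|/\Delta s_2$ to be a small multiple of $\log\lambda\,\psi$, which after integration produces exactly the $(1-\mu)/6$ contribution to the exponent.

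For $0\le t\le T_1$ one has $s_1\le s_2$, hence $s_2^2/(s_1^2+s_2^2)\le 1$. Integrating the differential inequality from $0$ to $t$ and using the upper bound on $|s_2(\tau)|$ from Lemma \ref{lem5.2} to estimate
$$\int_0^t\psi(s_1^2+s_2^2)\,d\tau \le \tfrac{2^{\alpha-1}}{\alpha\log\lambda}\log\bigl(1+C_1 s_2^{2\alpha}(0)\,t\bigr),$$
and exponentiating, one obtains the first estimate with exponent $\beta = 2^{\alpha-1}(1+\mu)+(1-\mu)/6$. For $T_1\le t\le T$, $s_1\ge s_2$ so $s_2^2/(s_1^2+s_2^2)\le 1/2$; running the same argument with $s_1$ in place of $s_2$ and using the fifth inequality of Lemma \ref{lem5.2} (upper bound on $|s_1(\tau)|$ in terms of $|s_1(T_1)|$), the integral of $\psi$ over $[T_1,t]$ becomes a logarithmic function of $1+C_1 s_1^{2\alpha}(T_1)(t-T_1)$. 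Pasting this estimate with the value at $T_1$ delivered by the first inequality yields the second bound; the extra $2^\alpha/\alpha$ in $\beta_1$ comes from the fact that $s_1$ is now \emph{growing} on this interval, so the a priori control of $\int\psi\,d\tau$ is less favorable than in the first half.

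The main obstacle is tracking the constants carefully to recover the precise exponents. Separating the linearized contribution (responsible for $(1+\mu)2^{\alpha-1}$) from the nonlinear remainder (responsible for $(1-\mu)/6$) depends on the smallness hypothesis on $|\Delta s_2(0)/s_2(0)|$ keeping $R$ strictly subdominant to the main term throughout $[0,T]$, which in turn requires first invoking Lemma \ref{deltas2upp} to control $\Delta s_2$ itself; and on the second interval one must handle the transition at $T_1$ so that the constants $(1+\mu)2^{\alpha-1}$ and $(1-\mu)/6$ survive, while honestly accounting for the additional $2^\alpha/\alpha$ contribution coming from the growing coordinate.
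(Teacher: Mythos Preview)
Your plan is essentially the paper's proof: form the ratio $\chi=\Delta s_2/s_2$ on $[0,T_1]$ (and $\tilde\chi=\Delta s_2/s_1$ on $[T_1,T]$), bound the quadratic Taylor remainder via Lemma~\ref{var-eq1} and the smallness hypothesis, apply Gronwall, and convert $\int s_2^{2\alpha}$ (resp.\ $\int s_1^{2\alpha}$) into a logarithm using Lemma~\ref{lem5.2}. Two small corrections: on $[T_1,T]$ the paper uses the \emph{fourth} estimate of Lemma~\ref{lem5.2} (with $b=t$), not the fifth, to obtain the $(1+C_1 s_1^{2\alpha}(\cdot)(t-T_1))$ factor; and the extra $2^\alpha/\alpha$ in $\beta_1$ does not come from the integral of $\psi$ being less favorable but from the \emph{coefficient} in the differential inequality---when you differentiate $\Delta s_2/s_1$ the term $-\dot s_1/s_1=-\psi\log\lambda$ adds to (rather than cancels, as it did for $\Delta s_2/s_2$) the $\psi$ coming from $\partial_{s_2}F$, producing an extra $2\psi$ in the coefficient and hence $2^\alpha/\alpha$ in the exponent after dividing by $2\alpha$.
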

\begin{proof}
Let $s_1=s_1(t)$, $s_2=s_2(t)$, $u:=s_1^2+s_2^2$, and 
$\tilde{u}=\tilde{s}_1^2+\tilde{s}_2^2$. Assume $s_1(t)$ and $s_2(t)$ are strictly positive (the proof in the case when $s_1(t)$ and $s_2(t)$ are strictly negative follows by symmetry). 
By Equation \eqref{batata2}, we have
\begin{equation}\label{deltas}
\begin{aligned}
\frac{d}{dt}\Delta s_2(t)&=\frac{d}{dt}\tilde{s}_2(t)-\frac{d}{dt}s_2(t)=-(\log\lambda)
\left(\tilde{s}_2\psi(\tilde u)-s_2\psi(u)\right) \\
&=-\log\lambda\left(\frac{\partial}{\partial s_1}\Big(s_2\psi(u)\Big)\Delta s_1+\frac{\partial}{\partial s_2}\Big(s_2\psi(u)\Big)\Delta s_2\right)\\
&\hspace{0.5cm}
-\frac{\log\lambda}{2} \sum_{i,j=1,2}d_{i,j}(\xi_1,\xi_2)(\Delta s_i)(\Delta s_j)
\end{aligned}
\end{equation}
for some $\xi=(\xi_1,\xi_2)$ for which $\xi_i$ lies between $s_i(t)$ and $\tilde{s}_i(t)$ for $i=1,2$ (see Lemma \ref{var-eq1} for the definition of $d_{i,j}(\xi_1,\xi_2)$). Note that 
$$
\frac{\partial}{\partial s_1}\Big(s_2\psi(u)\Big)=2s_1s_2\psi',
\quad\frac{\partial}{\partial s_2}\Big(s_2\psi(u)\Big)=2s_2^2\psi'+\psi
$$
and hence,
$$
\begin{aligned}
\frac{d}{dt}\left(\frac{\Delta s_2}{s_2}\right)
&=\frac{1}{s_2}\left(\frac{d}{dt}\Delta s_2\right)-\frac{\Delta s_2}{s_2^2}\left(\frac{ds_2}{dt}\right)\\
&=-\log\lambda\left(2\psi'(s_1 \Delta s_1 + s_2 \Delta s_2)
+\frac{\Delta s_2}{s_2}\psi\right)\\
&\hspace{0.5cm}+(\log\lambda)\frac{\Delta s_2}{s_2}\psi
-\frac{\log\lambda}{2} \sum_{i,j=1,2}d_{i,j}(\xi_1,\xi_2)\frac{\Delta s_i\Delta s_j}{s_2}\\
&=-\frac{2\alpha\log\lambda}{r_0^\alpha}\, (s_1^2+s_2^2)^{\alpha-1}(s_1 \Delta s_1 + s_2 \Delta s_2)\\
&\hspace{0.5cm}
-\frac{\log\lambda}{2} \sum_{i,j=1,2}d_{i,j}(\xi_1,\xi_2)\frac{\Delta s_i\Delta s_j}{s_2}. 
\end{aligned}
$$
Note that for $0\le t\le T_1$ we have $0<s_1(t)\le s_2(t)$. Since $|\Delta s_1|\le\mu\Delta s_2<\Delta s_2$, we have
$$
s_1\Delta s_1+s_2\Delta s_2\le (s_1\mu+s_2)\Delta s_2\le (1+\mu)s_2\Delta s_2
$$
and Lemma~\ref{var-eq1} yields
\begin{equation}\label{eq:dij}
\sum_{i,j=1,2}d_{i,j}(\xi_1,\xi_2)\Delta s_i\Delta s_j
\le \frac{24\,\alpha}{r_0^\alpha}\ (\xi_1^2 +\xi_2^2)^{\alpha-\frac12}(\Delta s_2)^2.
\end{equation}
Therefore, we have
$$
\begin{aligned}
\frac{d}{dt}(\frac{\Delta s_2}{s_2})
&\ge -(1+\mu)\frac{2\a \log\lambda}{r_0^{\a}}(s_1^2+s_2^2)^{\a-1}s_2^2\frac{\Delta s_2}{s_2}\\
&-\frac{12\a \log\lambda}{r_0^{\a}}s_2^{2\a}(\frac{\xi_1^2 +\xi_2^2}{s_2^2})^{\alpha-\frac12}\Big(\frac{\Delta s_2}{s_2}\Big)^2.
\end{aligned}
$$
Using again the fact that $0<s_1(t)\le s_2(t), \ 0<t<T_1$ we obtain
$$
\begin{aligned}
\frac{d}{dt}\Big(\frac{\Delta s_2}{s_2}\Big)
&\ge -(1+\mu)\frac{\a \log\lambda}{r_0^{\a}}2^{\a}(s_2)^{2\a}\frac{\Delta s_2}{s_2}\\
&-\frac{12\a \log\lambda}{r_0^{\a}}s_2^{2\a}(\frac{\xi_1^2 +\xi_2^2}{s_2^2})^{\alpha-\frac12}\Big(\frac{\Delta s_2}{s_2}\Big)^2.
\end{aligned}
$$
Let $\chi=\chi(t)=\frac{\Delta s_2}{s_2}(t).$ Then the above inequality can be written as
$$
\frac{d\chi}{dt}\ge -\frac{\alpha\log\lambda}{r_0^\alpha}s_2^{2\alpha}\chi\Big((1+\mu)2^{\a} + 12\left(\frac{\xi_1^2+\xi_2^2}{s_2^2}\right)^{\alpha-\frac12}\chi\Big).
$$
Following arguments in \cite{THK} (see page 17) one can derive from here that 
$\Bigl(\frac{\xi_1^2+\xi_2^2}{s_2^2}\Bigr)^{\alpha-\frac12}\le 2$ and that
$$
\frac{d\chi}{dt}\ge -A\frac{\alpha\log\lambda}{r_0^\alpha}s_2^{2\alpha}(t)\chi (t),
$$
where $A=(1+\mu)2^{\a}+\frac{1-\mu}{3}$. By Gronwall's inequality (applied to $-\chi(t)$) and the second inequality in Lemma \ref{lem5.2}, we obtain that
\[  	
\begin{aligned}
\chi(t)&\ge\chi(0)\exp\left(-A\frac{\alpha\log\lambda}{r_0^\alpha} 
\int_0^t s_2^{2\alpha}(\tau)\,d\tau\right) \\
&\ge\chi(0)\exp\left(-A\frac{\alpha\log\lambda}{r_0^\alpha} 
\int_0^t s_2^{2\alpha}(0) (1+C_1 s_2^{2\alpha}(0)\,\tau)^{-1}\,d\tau\right) \\
&=\chi(0)\exp\left(-A\frac{\alpha\log\lambda}{r_0^\alpha} 
\frac{1}{C_1}\log(1+C_1 s_2^{2\alpha}(0)\,t\ )\right)\\ 
&=\chi(0)\exp\left(-\frac{A}{2}\log(1+C_1s_2^{2\alpha}(0)\,t\ )\right)\\
&=\chi(0)\left(1+C_1s_2^{2\alpha}(0)\,t\ \right)^{-\beta},
\end{aligned}
\]
where $\beta=\frac{A}{2}=(1+\mu)2^{\a-1}+\frac{1-\mu}{6}$.

In the case $s_2<0$ one can show using argument similar to the above that the same estimate for $\chi(t)$ holds but with exponent $(1+\mu)2^{\a-1}-\frac{1-\mu}{6}<\beta$. This completes the proof of the first estimate.

To prove the second estimate, using \eqref{deltas}, we obtain that
$$
\begin{aligned}
\frac{d}{dt}\left( \frac{\Delta s_2}{s_1}\right)
&=-\log\lambda \left( 2s_2\psi' \Delta s_1+(2s_2^2\psi' +\psi)\frac{\Delta s_2}{s_1}\right)
-\log\lambda \psi \frac{\Delta s_2}{s_1} \\
&\hspace{0.5cm}-\frac{\log \lambda}{2}\sum_{i,j=1,2}d_{i,j}(\xi_1,\xi_2)
\frac{\Delta s_i\Delta s_j}{s_1}.
\end{aligned}
$$
By the assumption $|\Delta s_1|\le \mu\Delta s_2$ and positivity of 
$s_1$, $s_2$, $\psi'$, and $\Delta s_2$, we obtain
$$
\begin{aligned}
\frac{d}{dt}\left( \frac{\Delta s_2}{s_1}\right)
&\ge-\log\lambda ( 2\mu s_1s_2\psi'+2s_2^2\psi'+2\psi )\frac{\Delta s_2}{s_1}\\
&\hspace{0.5cm}-\frac{\log \lambda}{2}\sum_{i,j=1,2}d_{i,j}(\xi_1,\xi_2)
\frac{\Delta s_i\Delta s_j}{s_1}.
\end{aligned}
$$
Since $s_2(t)\le s_1(t)$ on $[T_1,T]$, we have
$$ 
2\mu s_1s_2\psi'+2s_2^2\psi'+2\psi=2\psi'(\mu s_1^2+s_1^2+\frac 1{\a}2s_1^2)\le 2^{\a}\frac{\a\mu+\a+2}{r_0^{\a}}s_1^{2\a}.
$$
Using this fact along with the estimate \eqref{eq:dij}, we find that
$$
\begin{aligned}
\frac{d}{dt}\left( \frac{\Delta s_2}{s_1}\right)
&\ge-\log\lambda 2^{\a}\frac{\a\mu+\a+2}{r_0^{\a}}s_1^{2\a}\frac{\Delta s_2}{s_1}\\
&\hspace{0.5cm}-\frac{12\a \log\lambda}{r_0^{\a}}s_1^{2\a}(\frac{\xi_1^2 +\xi_2^2}{s_1^2})^{\alpha-\frac12}\Big(\frac{\Delta s_2}{s_1}\Big)^2.
\end{aligned}
$$ 
Let $\tilde\chi=\tilde\chi(t)=\frac{\Delta s_2}{s_1}(t)$. Then the above inequality can be written as
\begin{equation}\label{newchi1}
\frac{d\tilde\chi}{dt}\ge -\frac{\log\lambda}{r_0^\alpha}s_1^{2\alpha}\tilde\chi\Bigl(2^{\a}(\a\mu+\a+2)+12\alpha\left(\frac{\xi_1^2+\xi_2^2}{s_1^2}\Bigr)^{\alpha-\frac12}\tilde\chi\right).
\end{equation}
It is shown in \cite{THK} (see page 19) that
\begin{equation}\label{ineq1}
\Bigl(\frac{\xi_1^2+\xi_2^2}{s_1^2}\Bigr)^{\alpha-\frac12}\le 
\begin{cases}
(1-\tilde\chi)^{2\alpha-1} & 0<\alpha\le\frac12, \\
2^{\alpha-\frac12}(1+\tilde\chi)^{{2\alpha-1}} & \frac12\le\alpha<1 
\end{cases}
\end{equation}
and that $\tilde\chi=\frac{\Delta s_2}{s_2}$ is positive and decreasing (or negative and increasing). Observing that $s_1(T_1) =s_2(T_1)$ and using Assumption (2) of the lemma,  we obtain
$$ 
0\le\tilde\chi(T_1)=\frac{\Delta s_2(T_1)}{s_1(T_1)} = \frac{\Delta s_2(T_1)}{s_2(T_1)} 
\le\frac{\Delta s_2(0)}{s_2(0)} < \frac{1-\mu}{72}.
$$
\begin{equation}\label{ineq2}
\tilde{\chi}(t)=\frac{\Delta s_2(t)}{s_1(t)} \le  \frac{\Delta s_2(T_1)}{s_1(T_1)}
(1+2^{\a}C_1s_1^{2\a}(t)(t-T_1))^{-\beta}\le  \frac{\Delta s_2(T_1)}{s_1(T_1)}.
\end{equation}
Setting $B=2^{\a}(\a\mu+\a+2)+\frac{1-\mu}{3}\a$ and combining \eqref{newchi1}, \eqref{ineq1}, and \eqref{ineq2}, we obtain that
\begin{equation}\label{eq:chi-estimate1}
\frac{d\tilde\chi}{dt}\Big|_{t=T_1}\ge-\frac{B\log\lambda}{r_0^\alpha}s_1^{2\alpha}(T_1)\tilde\chi(T_1).
\end{equation}
Therefore, Gronwall's inequality and the fourth inequality in Lemma~\ref{lem5.2} now yield
\[  	
\begin{aligned}
\tilde\chi(t)&\ge\tilde\chi(T_1)\exp\left(-B\frac{\log\lambda}{r_0^\alpha} 
\int_{T_1}^t s_1^{2\alpha}(\tau)\,d\tau\right) \\
&\ge\tilde\chi(T_1)\exp\left(-B\frac{\log\lambda}{r_0^\alpha} 
\int_{T_1}^t s_1^{2\alpha}(t) (1+C_1 s_1^{2\alpha}(0)\,(t-\tau))^{-1}\,d\tau\right) \\
&=\tilde\chi(T_1)\exp\left(-B\frac{\log\lambda}{r_0^\alpha} 
\frac{1}{C_1}\log(1+C_1 s_1^{2\alpha}(t)\,(t-T_1)\ )\right)\\ 
&=\tilde\chi(T_1)\exp\left(-B\log(1+C_1s_1^{2\alpha}(t)\,(t-T_1)\ )\right)\\
&=\tilde\chi(T_1)\left(1+C_1s_1^{2\alpha}(t)\,(t-T_1)\ \right)^{-\beta_1},
\end{aligned}
\]
where $\beta_1=\frac{B}{2\alpha}$. It follows that 
$$
\Delta s_2(t)\ge\frac{\Delta s_2(T_1)}{s_1(T_1)}s_1(t)\left(1+C_1\, s_1^{2\alpha}(T_1)\,(t-T_1)\right)^{-\beta_1}, \,\,T_1 \le t \le T.
$$
In the case $s_1<0$ one can show using argument similar to the above that the same estimate for $\tilde\chi(t)$ holds but with exponent $2^{\alpha-1}(1+\mu)+\frac{2^\alpha}{\a}-\frac{1-\mu}{6}<\beta_1$. This completes the proof of the second estimate.
\end{proof}	
\begin{lem}\label{deltas2low2} 
Under the assumptions of Lemma \ref{deltas2upp} one has for some $C_4>0$ and all $0<t<T_1$ that	$\Delta s_2(t)\ge C_4\Delta s_2(0)t^{-\gamma}$.
\end{lem}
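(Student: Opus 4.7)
The plan is to combine the lower bound for $\Delta s_2(t)$ from Lemma~\ref{deltas2low1} with the lower bound for $s_2(t)$ from Lemma~\ref{lem5.2}, in direct analogy with the proof of the companion upper bound Lemma~\ref{deltas2upp1}. First, I would apply the first estimate in Lemma~\ref{deltas2low1} to obtain
$$
\Delta s_2(t) \ge \frac{\Delta s_2(0)}{s_2(0)}\, s_2(t)\,\bigl(1 + C_1\, s_2^{2\alpha}(0)\, t\bigr)^{-\beta}, \quad 0 \le t \le T_1,
$$
where $\beta = (1+\mu)2^{\alpha-1} + \frac{1-\mu}{6}$. Next, I would use the first estimate of Lemma~\ref{lem5.2} (with $a = 0$) to lower bound $s_2(t)$ itself by $s_2(0)\bigl(1 + 2^\alpha C_1 s_2^{2\alpha}(0) t\bigr)^{-\frac{1}{2\alpha}}$. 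Substituting yields
$$
\Delta s_2(t) \ge \Delta s_2(0)\bigl(1 + 2^\alpha C_1 s_2^{2\alpha}(0) t\bigr)^{-\frac{1}{2\alpha}}\bigl(1 + C_1 s_2^{2\alpha}(0) t\bigr)^{-\beta}.
$$

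Having reduced to this product of elementary factors, the next step is to bound each factor from below by a multiple of a pure power of $t$. Using the elementary inequality $1 + Ct \le 2Ct$ whenever $Ct \ge 1$, we get
$$
\bigl(1 + 2^\alpha C_1 s_2^{2\alpha}(0) t\bigr)^{-\frac{1}{2\alpha}} \bigl(1 + C_1 s_2^{2\alpha}(0) t\bigr)^{-\beta} \ge c\,\bigl(C_1 s_2^{2\alpha}(0)\bigr)^{-\gamma}\, t^{-\gamma},
$$
where $\gamma = \frac{1}{2\alpha} + \beta$ matches the definition in \eqref{gamma-gamma'}. Since $s_2(0)$ is of order $r_0$ (a fixed positive number), the factor $(C_1 s_2^{2\alpha}(0))^{-\gamma}$ is an absolute constant that can be absorbed into $C_4$, yielding the desired bound $\Delta s_2(t) \ge C_4\, \Delta s_2(0)\, t^{-\gamma}$.

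The only subtlety is the regime of very small $t$, where the two bracket factors are close to $1$ while $t^{-\gamma}$ blows up. As in the companion Lemma~\ref{deltas2upp1}, whose right-hand side is likewise trivial as $t \to 0$, the bound is nontrivial only once $C_1 s_2^{2\alpha}(0) t$ is bounded below; for the complementary small-$t$ range one uses the crude estimate $\Delta s_2(t) \ge c_0\, \Delta s_2(0)$ obtained from the same product with both factors treated as $O(1)$, and then chooses $C_4$ small enough that $C_4\, t^{-\gamma}\le c_0$ on that range. The symmetric case $s_2 < 0$ is handled exactly as in Lemma~\ref{deltas2low1} and produces only a slightly smaller constant exponent that does not affect $\gamma$. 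The main obstacle is simply the bookkeeping of constants to check that the exponent obtained from the product of the two Gronwall-type bounds is precisely $\gamma$; once one verifies the arithmetic $\frac{1}{2\alpha} + \beta = \gamma$, the rest is routine substitution.
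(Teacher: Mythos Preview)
Your proposal is correct and follows essentially the same route as the paper: combine the first estimate of Lemma~\ref{deltas2low1} with the first inequality of Lemma~\ref{lem5.2}, observe that the resulting exponent is $\frac{1}{2\alpha}+\beta=\gamma$, convert the bracket $(1+C\,t)$ into a pure power of $t$ for $t$ bounded below, and treat the small-$t$ range separately by a trivial constant bound. The only cosmetic differences are that the paper collapses both factors into a single $(1+C_1 s_2^{2\alpha}(0)t)^{-\gamma}$ before passing to $t^{-\gamma}$, and phrases the small-$t$ case as ``the orbit stays bounded away from the region of perturbation'' rather than your direct $\Delta s_2(t)\ge c_0\,\Delta s_2(0)$ argument.
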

\begin{proof}
By assumption, Lemma \ref{deltas2low1} holds and the first estimate together with the first inequality in Lemma \ref{lem5.2} yield
$$
\begin{aligned}
\Delta s_2(t)&\ge \frac{\Delta s_2(0)}{s_2(0)}s_2(t)(1+C_1s_2^{2\a}(0)t)^{-\beta}\\
&\ge \Delta s_2(0)(1+C_1s_2^{2\a}(0)t)^{-(\frac{1}{2\a}+\beta)}.
\end{aligned}
$$
Since $1+C_1s_2^{2\a}(0)t\le(1+C_1s_2^{2\a}(0))t$ for $t\ge1$, the above implies
$$
\Delta s_2(t)> \Delta s_2(0)(1+C_1s_2^{2\a}(0))^{-(\frac{1}{2\a}+\beta)}t^{-(\frac{1}{2\a}+\beta)}.
$$
It remains to observe that $s_2(0)$ is of order $r_0$ and so 
$(1+C_1s_2^{2\a}(0))^{-(\frac{1}{2\a}+\beta)}$ is a constant and 
$\gamma=\frac{1}{2\a}+\beta$.

For $0<t\le 1$ the orbit stays bounded away from the region of perturbation and so the inequality holds for some constant.
\end{proof}	

\begin{lem}\label{deltas2low3}
Under the assumptions of Lemma \ref{deltas2upp} one has for some $C_5>0$ and all $T_1<t<T$ that $\Delta s_2(t)\ge C_5 \Delta s_2(T_1)$.
\end{lem}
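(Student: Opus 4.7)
The plan is to treat Lemma \ref{deltas2low3} as the ``lower-bound twin'' of Lemma \ref{deltas2upp2}, using the second (lower-bound) estimate of Lemma \ref{deltas2low1} in place of the upper bounds that were used there.

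First I would invoke Lemma \ref{deltas2low1} to get, for $T_1 \le t \le T$,
\[
\Delta s_2(t) \ge \frac{\Delta s_2(T_1)}{s_1(T_1)} \, s_1(t) \, \bigl(1 + C_1 s_1^{2\alpha}(T_1)(t - T_1)\bigr)^{-\beta_1}.
\]
This reduces the lemma to showing that the factor $\dfrac{s_1(t)}{s_1(T_1)} \bigl(1 + C_1 s_1^{2\alpha}(T_1)(t - T_1)\bigr)^{-\beta_1}$ is bounded below by a positive constant depending only on the constants fixed throughout the section.

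For the first factor, I would observe from the $s_1$-equation in \eqref{batata2} that $|s_1(t)|$ is non-decreasing along any trajectory, since $\psi > 0$ and $\log\lambda > 0$ force $s_1$ to move monotonically away from the origin. In particular, on $[T_1, T]$ this gives $s_1(t)/s_1(T_1) \ge 1$ under the positive-sign convention of Lemma \ref{deltas2low1}, with the negative case following by the symmetry argument already invoked there. For the second factor, I would reuse the estimate that was established inside the proof of Lemma \ref{deltas2upp2}, namely
\[
1 - 2^\alpha C_1 s_1^{2\alpha}(T_1)(t - T_1) \ge 1 - C_0 > 0,
\]
which rests on the conservation law $s_1 s_2 = \kappa_0$ for \eqref{batata2}, the integral bound $\log(s_2(T_1)/s_2(0)) \le -\log\lambda (2\kappa_0)^\alpha T_1$, the fact that $t - T_1 \le T_1$, and the elementary inequality $T_1 e^{-c T_1} \le 1/(ec)$ for $c>0$.

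This is equivalent to $C_1 s_1^{2\alpha}(T_1)(t - T_1) \le 2^{-\alpha} C_0$, so
\[
\bigl(1 + C_1 s_1^{2\alpha}(T_1)(t - T_1)\bigr)^{-\beta_1} \ge (1 + 2^{-\alpha} C_0)^{-\beta_1},
\]
and setting $C_5 := (1 + 2^{-\alpha} C_0)^{-\beta_1} > 0$ would give $\Delta s_2(t) \ge C_5 \Delta s_2(T_1)$, as required. I do not expect any genuine obstacle here: both auxiliary inputs — monotonicity of $|s_1|$ on $[T_1,T]$ and the boundedness of $C_1 s_1^{2\alpha}(T_1)(t - T_1)$ — are essentially already in hand from the previous lemmas in this section, and the argument is the exact mirror of Lemma \ref{deltas2upp2} with the direction of the relevant inequalities reversed.
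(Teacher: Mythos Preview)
Your proposal is correct and follows essentially the same route as the paper: invoke the second estimate of Lemma \ref{deltas2low1}, use monotonicity of $|s_1|$ on $[T_1,T]$ to dispose of the factor $s_1(t)/s_1(T_1)$, and then bound $C_1 s_1^{2\alpha}(T_1)(t-T_1)$ above by a universal constant. The only cosmetic difference is the source of that last bound: the paper reads it off directly from the fifth inequality of Lemma \ref{lem5.2} (the base $1-2^\alpha C_1 s_1^{2\alpha}(T_1)(t-T_1)$ must stay positive for all $t\le T$, forcing $C_1 s_1^{2\alpha}(T_1)(t-T_1)<2^{-\alpha}$), whereas you recycle the more explicit computation from the proof of Lemma \ref{deltas2upp2}; either way one lands on the same constant and the same conclusion.
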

\begin{proof}
The fifth inequality in Lemma \ref{lem5.2} implies that 
$2^{\a}C_1\, s_1^{2\alpha}(T_1)\,(t-T_1)<1$ and hence, 
$$
1-C_1\, s_1^{2\alpha}(T_1)\,(t-T_1)>1-\frac{1}{2^{\a}}.
$$	
In addition, since $s_1$ is increasing with respect to time $t$ we have $s_1(t)>s_1(T_1)$
By assumption, Lemma \ref{deltas2low1} holds and the second estimate of this lemma yields 
$$
\begin{aligned}
\Delta s_2(t)&\ge\frac{\Delta s_2(T_1)}{s_1(T_1)}s_1(t)\left(1-C_1\, s_1^{2\alpha}(T_1)\,(t-T_1)\right)^{\beta_1}\\
&> \Delta s_2(T_1)(1-\frac{1}{2^\a})^{\beta_1}.
\end{aligned}
$$ 	
This completes the proof of the lemma. 
\end{proof}
Consider a set $\Lambda^s_i$ and note that it consists of full length $s$-curves. Let us fix one of these curves, say $\sigma$.
\begin{lem}\label{ratioest}
Assume that $\sigma$ enters the slow down disk $D_{\frac{r_0}{2}}$ around the origin at time $n$, so that the intersection $f^n(\sigma)\cap D_{\frac{r_0}{2}}$ is not empty. Assume that $\sigma$ then exits $D_{\frac{r_0}{2}}$ at time $m$, $m>n>1$. Then
$$
C_6(m-n)^{-\gamma}\le\frac{L(f^m(\sigma))}{L(f^n(\sigma))}\le C_7(m-n)^{-\gamma'},
$$
where $C_6>0$, $C_7>0$, $\gamma$, $\gamma'$ are as in \eqref{gamma-gamma'}, and $L$ denotes the curve length. 
\end{lem}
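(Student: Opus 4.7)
The plan is to reduce the estimate to bounds on $\Delta s_2(t)$ already established in the preceding technical lemmas. First I would pass to the $(s_1,s_2)$-plane via $\phi^{-1}$ and work with the flow $\Phi(t)$ generated by \eqref{batata2}. Setting $\bar\sigma := \phi^{-1}(f^n(\sigma))$ and $N := m-n$, we have $\phi^{-1}(f^m(\sigma)) = \Phi(N)(\bar\sigma)$. Both $f^n(\sigma)$ and $f^m(\sigma)$ meet a neighborhood of $\partial D_{r_0/2}$, which is uniformly separated from the origin; since the singular behavior of $\phi$ given by \eqref{mapshi} is confined to a neighborhood of the fixed point, $\phi$ is bi-Lipschitz on an annular region containing these curves. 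It therefore suffices to prove the required bounds on $L(\Phi(N)\bar\sigma)/L(\bar\sigma)$ computed in the Euclidean metric of the $(s_1,s_2)$-plane, with the cost of absorbing universal constants into $C_6,C_7$.

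Next, using that $\sigma$ is stable and that the stable cone around the $s_2$-axis is forward-invariant under $\Phi$ (the $s_2$-axis being the stable manifold of the origin for the flow), I would fix $\mu\in(0,1)$ and, after shrinking $r_0$ if necessary and subdividing $\bar\sigma$ into short enough subarcs, ensure that for any pair of nearby points $s(0),\tilde s(0)$ on $\bar\sigma$ the coordinate differences $\Delta s_i=\tilde s_i-s_i$ satisfy the hypotheses of Lemma~\ref{deltas2upp}: namely $|\Delta s_1(t)|\le \mu\Delta s_2(t)$ on $[0,N]$ and $|\Delta s_2(0)/s_2(0)|<(1-\mu)/72$. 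The first is cone invariance; the second is automatic once the subarc is short compared with $|s_2(0)|\sim r_0$.

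The core estimate then follows by splitting the passage at $T_1=N/2$. The upper bounds in Lemmas~\ref{deltas2upp1} and~\ref{deltas2upp2} combine into
\[
\Delta s_2(N)\;\le\;C_3\,\Delta s_2(T_1)\;\le\;C_2 C_3\,\Delta s_2(0)\,T_1^{-\gamma'}\;\le\;C\,\Delta s_2(0)\,N^{-\gamma'},
\]
while the lower bounds in Lemmas~\ref{deltas2low2} and~\ref{deltas2low3} combine into $\Delta s_2(N)\ge C'\,\Delta s_2(0)\,N^{-\gamma}$. Passing these estimates to the infinitesimal limit (pairs of points merging) yields pointwise two-sided bounds on $\|D\Phi(N)|_{T\bar\sigma}\|$, which I would then integrate along $\bar\sigma$ to produce the length ratios $C_6 N^{-\gamma}\le L(\Phi(N)\bar\sigma)/L(\bar\sigma)\le C_7 N^{-\gamma'}$. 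Bi-Lipschitz control of $\phi$ at the entry and exit circles then returns the bound to $L(f^m(\sigma))/L(f^n(\sigma))$.

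The main obstacle I anticipate is verifying that the cone condition $|\Delta s_1(t)|\le\mu\Delta s_2(t)$ actually persists throughout the entire transit $[0,N]$, not merely infinitesimally, since the linearization of \eqref{batata2} has an off-diagonal coupling of order $2 s_1 s_2\psi'$ that rotates the tangent direction. One has to argue that when the cone is narrow and the arc is stable, this rotation is dominated by the hyperbolic splitting (which is still governed by the product $s_1 s_2$, a constant of motion for the flow), so that the cone is forward-invariant uniformly in $N$.
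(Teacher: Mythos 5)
Your proposal rests on the same technical ingredients as the paper (Lemmas~\ref{deltas2upp}--\ref{deltas2low3} applied to the transit through $D_{r_0/2}$, with the split at $T_1=N/2$), and the combination of the Delta estimates to produce $N^{-\gamma}$ and $N^{-\gamma'}$ bounds is exactly right. The one genuine methodological difference is the step from the Delta estimates to the length ratio. You take the infinitesimal limit of $\Delta s_2(N)/\Delta s_2(0)$ to obtain pointwise two-sided bounds on $\|D\Phi(N)|_{T\bar\sigma}\|$ and then integrate along the arc. The paper avoids this differentiation/integration cycle altogether: since $\sigma$ lies in a stable cone, $L(f^k(\sigma))$ is uniformly comparable (up to the constant $K_0$ of inequality~\eqref{ratioest1}) to $d(x_k,y_k)$ for the two \emph{endpoints} $x,y$ of $\sigma$, and the Delta lemmas are then applied directly to the finite-separation pair $(x_n,y_n)$. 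This endpoint-to-chord comparison is what makes the paper's argument shorter; it sidesteps the need to justify passing the finite-separation bound to the derivative and the subsequent integration over subarcs, though your route is viable with the standard uniformity argument. Your treatment of the conjugacy $\phi^{-1}$ (bi-Lipschitz away from the fixed point) is made explicit where the paper folds it into the constant $K_1$ of~\eqref{ratioest2}.

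Your closing concern about the cone condition $|\Delta s_1(t)|\le\mu\Delta s_2(t)$ persisting for the full transit is well placed but is not a gap in the present lemma: the paper dispatches it with the one-line remark that $y$ lies in the stable cone at $x$, and the underlying cone invariance is part of the hyperbolic structure of the slow-down map established in \cite{THK} and Proposition~\ref{smoothH}. Similarly, the smallness of $\Delta s_2(0)/s_2(0)$ required by Assumption~(2) of Lemma~\ref{deltas2upp} is arranged in the paper via Proposition~\ref{partition} by choosing $r_0$ small (equivalently $Q$ large), rather than by subdividing $\sigma$; both devices achieve the same effect, but the paper's choice avoids having to chain estimates across subarcs.
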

\begin{proof}
Let $x$ and $y$ be the endpoints of the curve $\sigma$. For $k\ge 0$ set $x_k=f^k(x)$ and $y_k=f^k(y)$. It is easy to see that there is $K_0>0$ such that for all $k\ge 1$, 
\begin{equation}\label{ratioest1}
K_0^{-1}d(x_k, y_k)\le L(f^k(\sigma))\le K_0d(x_k, y_k),
\end{equation}
where $d$ denotes the usual distance.
 
Let $s,\tilde{s}: [0, N]\to R^2$ be the solutions of Equation \eqref{batata10} with initial conditions $s(0)=x_n$ and $\tilde{s}(0)=y_n$ respectively. Also, define 
$\Delta s_i(t)=\tilde{s_i}(t)-s_i(t)$, $i=1,2$ and $\Delta s=(\Delta s_1, \Delta s_2)$. Note that
there is $K_1>0$ such that for all $n,m>0$ and $n\le j\le m$
\begin{equation}\label{ratioest2}
K_1^{-1}||\Delta s(j)||\le d(x_j, y_j)\le K_1||\Delta s(j)||.
\end{equation}
We wish to apply Lemmas \ref{deltas2upp}--\ref{deltas2low3} to the orbits of $x_n$ and $y_n$ and we need to check the assumptions of Lemma \ref*{deltas2upp}. Assumption (1) is satisfied, since $y$ is contained in the stable cone at $x$. Assumption (2) requires 
$d(x_i, y_i), i=n_0,m_0$ to be sufficiently small. In view of Proposition 6.1 and \eqref{ratioest2}, this can be ensured, if we choose the number $r_0$ in the construction of the slow down map sufficiently small to guarantee that $Q$ is sufficiently large.
 
We have that $f^j(\sigma)\subset D_{\frac{r_0}2}\cap\{(s_1,s_2): s_2\ge s_1\}$ for 
$n\le j\le\frac{n+m}{2}$ and $f^j(\sigma)\subset D_{\frac{r_0}2}\cap\{(s_1,s_2): s_2< s_1\}$ for $\frac{n+m}{2}< j<m$. Applying Lemmas \ref{deltas2low2} and \ref{deltas2low3} with 
$0\le t\le\frac{n+m}{2}$ and $\frac{n+m}{2}< t\le m$ and using \eqref{ratioest1}, we obtain that
$$
\begin{aligned}
L(f^m(\sigma))&\ge K_0^{-1}d(x_m,y_m)\ge K_0^{-1}K_1^{-1}||\Delta s(m-n)||\\
&\ge K_0^{-1}K_1^{-1}\Delta s_2(m-n)>K_0^{-1}K_1^{-1}C_5\Delta s_2\big(\frac{m-n}{2} \big)\\
&\ge K_0^{-1}K_1^{-1}C_5C_4\Delta s_2(0)\bigl(\frac{m-n}{2}\bigr)^{-\gamma}\\
&\ge K_0^{-1}K_1^{-1}C_5C_42^{\gamma}(m-n)^{-\gamma}\frac{1}{\sqrt{1+\mu^2}}||\Delta s(0)||\\
&\ge K_0^{-2}K_1^{-2}C_5C_42^{\gamma}(m-n)^{-\gamma}\frac{1}{\sqrt{1+\mu^2}}L(f^n(\sigma)).
\end{aligned}
$$
Therefore, for some $C_6>0$,
$$
\frac{L(f^m(\sigma))}{L(f^n(\sigma))}\ge C_6(m-n)^{-\gamma}.
$$
Using again Lemmas \ref{deltas2upp1} and \ref{deltas2upp2} and inequalities \eqref{ratioest1} and \eqref{ratioest1}, similar arguments yield that for some $C_7>0$,
$$
\frac{L(f^m(\sigma))}{L(f^n(\sigma))}\le C_7(m-n)^{-\gamma'}.
$$
This completes the proof of the lemma.
\end{proof}

\section{Proof of Theorem \ref{mainthm1}: A lower bound for the tail of the return time}

In this section we establish a polynomial lower bound on the decay of the tail of the return time that is $m(\{x\in\Lambda: \tau(x)>n\})$. Consider the Markov partitions 
$\tilde{\mathcal{P}}$ and $\mathcal{P}$ for the automorphism $A$ and the map $f_{\T^2}$ respectively and let $\tilde{P}\in\tilde{\mathcal{P}}$ and $P\in\mathcal{P}$ be the elements of the partitions as in Section \ref{torustower}. Fix the number $Q$ as in Proposition \ref{f is Young}. We assume that the partition $\mathcal{P}$ and the number $r_0$ are chosen such that Proposition \ref{partition} holds and we set again $f:=f_{\T^2}$. Finally, we denote by 
$$
\mathcal{N}=\{n\in\mathbb{N}: \text{ there is } x\in P \text{ such that } n=\tau(x)\}.
$$ 
\begin{lem}\label{specialLambda_i^s}
There exists an integer $Q_1>0$ such that for any $N>0$ one can find $n>N$ with  $n\in\mathcal{N}$, an $s$-subset $\Lambda_\ell^s$ with $\tau(\Lambda_\ell^s)=n$ and numbers $0<m_1<m_2$ satisfying $m_1<Q_1,\ n-m_2<Q_1$ such that 
$f^k(\Lambda_\ell^s)\cap D^1_{r_0}=\emptyset$ for all $0\le k< m_1$ or $m_2< k\le n$ and $f^k(\Lambda_\ell^s)\cap D^1_{r_0}\ne\emptyset$ for all 
$m_1\le k\le m_2$.
\end{lem}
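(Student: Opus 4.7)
The plan is to construct the desired $s$-subset $\Lambda_\ell^s$ by prescribing an itinerary through the Markov partition $\mathcal{P}$ that forces orbits to transit briefly from $P$ into $P_1$, linger near $x_1$ for many steps (and hence inside $D^1_{r_0}$), then transit briefly back to $P$. Since $A$ is topologically mixing and the Markov partition $\tilde{\mathcal{P}}$ is finite, there is a uniform constant $M_0$ (depending only on $\tilde{\mathcal{P}}$) bounding the length of admissible transitions between any two partition elements. Using Proposition~\ref{partition}, which places each $\tilde D^i_{r_0}$ in the interior of $\tilde P_i$ and keeps $\tilde P$ away from all these disks, I fix an entry word $w_{\text{in}}$ of length $m_1 \le M_0$ from $\tilde P$ to $\tilde P_1$ and an exit word $w_{\text{out}}$ of length $m_1'' \le M_0$ from $\tilde P_1$ to $\tilde P$, chosen so that all intermediate symbols avoid $\tilde P$ and all the disks $\tilde D^i_{r_0}$, $i=1,2,3,4$. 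For each $k\in\N$, form the concatenated word $w_k = w_{\text{in}} \cdot (\tilde P_1)^k \cdot w_{\text{out}}$ of length $n_k := m_1 + k + m_1''$.

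By the Markov property, the cylinder $\tilde C_k \subset \tilde P$ of points whose $A$-itinerary begins with $w_k$ is non-empty and has a full-length stable-unstable product structure. Its image $C_k := H(\tilde C_k) \subset P$ inherits this product structure, and its points have $f$-orbits following the corresponding itinerary through $\mathcal{P}$ (since $H$ conjugates $A$ with $f$ and sends $\tilde{\mathcal{P}}$ to $\mathcal{P}$). The first return time of $f$ to $P$ on $C_k$ is exactly $n_k$, so $n_k \in \mathcal{N}$, and the construction of Section~\ref{torustower} produces an $s$-subset $\Lambda_\ell^s \subset C_k$ of the Young tower with $\tau(\Lambda_\ell^s) = n_k$. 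For $0 \le j < m_1$ or $m_1 + k < j \le n_k$, the orbit $f^j(\Lambda_\ell^s)$ lies entirely in partition elements chosen to be disjoint from $D^1_{r_0}$, so the intersection with $D^1_{r_0}$ is empty.

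It remains to analyze the middle range $m_1 \le j \le m_1 + k$, where the orbit lies in $P_1$. In the linear coordinates around $x_1$, an $A$-orbit staying in $\tilde P_1$ for $k+1$ consecutive steps must have unstable coordinate of order $\lambda^{-k}$ at the entry and hence passes within distance $O(\lambda^{-k/2})$ of $x_1$ near the midpoint; transporting by the conjugacy $H$ puts the corresponding $f$-orbit inside $D^1_{r_0}$ for $k$ large. Crucially, on the annular region $P_1 \setminus D^1_{r_0}$ the map $f$ coincides with $A$ (the slow-down is supported inside $D_{r_0}$), and linear hyperbolicity of $A$ shows that no orbit can remain in this annulus for more than a uniformly bounded number of steps before either exiting $P_1$ or entering $D^1_{r_0}$. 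Consequently there exist $m_1 \le m_1^\star < m_2^\star \le m_1 + k$ with $m_1^\star - m_1$ and $(m_1+k) - m_2^\star$ bounded by some $Q_1'$ depending only on $\mathcal{P}$ and $r_0$, such that $f^j(\Lambda_\ell^s) \cap D^1_{r_0} \ne \emptyset$ precisely for $m_1^\star \le j \le m_2^\star$. Relabeling $m_1 := m_1^\star$, $m_2 := m_2^\star$, $n := n_k$, $Q_1 := M_0 + Q_1'$, and taking $k$ large enough to make $n > N$ yields the lemma.

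The main obstacle is the final geometric step: converting the symbolic fact ``orbit is in $P_1$'' into the assertion that $D^1_{r_0}$ is visited along a \emph{single contiguous} sub-interval of time, rather than along a disjoint union of sojourns. This reduces to monotonicity along the orbit of some radial quantity near $x_1$. For $A$ this follows from the product structure: in the stable-unstable coordinates based at $x_1$, the unstable coordinate is monotonically increasing and the stable one monotonically decreasing, so the distance from $x_1$ has a single minimum on the segment of time spent in $\tilde P_1$. Via $H$ and the identity $f = A$ off $D_{r_0}$, combined with the analogous monotonicity inside $D_{r_0}$ obtained from Lemma~\ref{lem5.2}, the same single-minimum property carries over to $f$ up to the bounded transit time absorbed into $Q_1'$.
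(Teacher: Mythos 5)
Your proposal follows the same two-step plan as the paper: (i) prescribe a symbolic itinerary of the form ``$P$, short transit, $P_1^k$, short transit, $P$'' and (ii) show that along the long block in $P_1$ the images of the corresponding $s$-set meet $D^1_{r_0}$ in a single contiguous interval of times with bounded entry/exit delay. The two proofs differ in how they implement both steps, and the comparison is instructive. For step (i), the paper uses the stable and unstable separatrices of $x_1$ restricted to $P$: iterating the stable separatrix segment forward under $G$ and the unstable segment backward gives finite entry and exit words, and these are then glued through a long block of $P_1$'s. You instead invoke the uniform transition-length bound for the topologically mixing subshift to produce entry/exit words $w_{\text{in}}, w_{\text{out}}$ of length at most $M_0$. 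Both produce the required word, but the separatrix choice has a payoff in step (ii): the paper's $s$-set ``completely enters $D^1_{r_0}$ at the same time as $\gamma^s$ and $\gamma^u$,'' so the entry/exit times for the set are literally those of the separatrices, whereas with a generic mixing word you need an independent argument (your bounded-transit-time-in-the-annulus argument) to convert ``in $P_1$'' into ``meets $D^1_{r_0}$''. Your step (ii) is actually more explicit than the paper's, and the key observation --- a single hyperbolic orbit has a unique radial minimum, so the individual sojourn intervals $[a_x,b_x]$ in $D^1_{r_0}$ all contain the common core $[m_1+Q_1', m_1+k-Q_1']$ and therefore union to an interval --- is exactly what is needed and is not spelled out in the paper.

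Two points need repair, though they do not change the overall structure. First, you transport the geometry via the topological conjugacy $H$ to $A$ and reason about $H^{-1}(D^1_{r_0})$, but $H$ does not respect the radial coordinates near $x_1$, so $H^{-1}(D^1_{r_0})$ is merely an open neighborhood of the fixed point (not a round disk, not obviously convex), and the ``hyperbola meets a convex disk in a single arc'' argument does not transfer automatically. The natural fix, used implicitly in the paper (see the first sentence of the proof of Lemma~\ref{lowerbound}), is to conjugate $f_{\T^2}$ to $G$ via $\phi$ instead: $\phi$ is radial, so $\phi^{-1}(D^1_{r_0})$ is again a disk $D^1_{r_0'}$, $G=A$ on $P_1\setminus D_{r_0}$, and inside $D^1_{r_0'}$ the slow-down flow \eqref{batata2} still conserves $s_1s_2$ and is monotone, so the single-minimum argument goes through verbatim. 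Second, the assertion ``$f$ coincides with $A$ on $P_1\setminus D^1_{r_0}$'' is only approximately true --- $f_{\T^2}(x)=\phi(A(x))\neq A(x)$ whenever $x\notin D_{r_0}$ but $A(x)\in D_{r_0}$ --- which is another reason to phrase the annulus argument in terms of $G$ rather than $f_{\T^2}$. With these substitutions the argument is sound and gives a somewhat more self-contained proof of the contiguity claim than the paper's sketch.
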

\begin{proof}
It suffices to show that there is $Q_1>0$ such that for any $N>0$ there is an admissible word of length $n>N$ with $n\in\mathcal{N}$ of the form
\begin{equation}\label{word}
P\bar{W}_1\bar{P_i}\bar{W}_2P,
\end{equation}
where the words $\bar{W}_1$ and $\bar{W}_2$ are of length $l(\bar{W}_j)< Q_1$ for $j=1,2$ and do not contain any of the symbols $P$ or $P_k$ (the element of the Markov partition containing $x_k$ for $k=1,2,3,4$), and the word $\bar{P_i}$ consists of the symbol $P_i$ which is repeated $n-2-l(\bar{W}_1)-l(\bar{W}_2)$ times. Since the map $f$ is topologically conjugate to $A$, it is enough to find an admissible word of the form \eqref{word} which consists of the corresponding elements of the partition $\tilde{\mathcal{P}}$. 

Note that $A=B^3$ where $B$ is an automorphism of the torus given by the matrix $B=\Bigl(\begin{matrix} 1&1\\1&2\end{matrix}\Bigr)$. Therefore the result would follow if we find an admissible word of the type of \eqref{word} for the automorphism $B$. To this end consider the stable and unstable separatrices through the origin and denote the ``first'' connected component of their intersection with $P$ by $\gamma^s$ and $\gamma^u$ respectively. It takes finitely many iterates of $G$ and $G^{-1}$ for each of these curves to completely enter the disk $D^1_{r_0}$. Now for each sufficiently large $n>0$ with $n\in\mathcal{N}$ there is an $s$-set $\Lambda^s_\ell$ with $\tau(\Lambda^s_\ell)=n$ which completely enters $D^1_{r_0}$ (under iterates of $G$ and $G^{-1}$) at the same time as 
$\gamma^s$ and $\gamma^u$ respectively. This completes the proof of the lemma.
\end{proof}

\begin{lem}\label{lowerbound}
There exists a constant $C_8>0$ such that
$$
m(\{x\in\Lambda: \tau(x)>n\})>C_8n^{-(\gamma-1)},
$$
where $\gamma$ is defined in \eqref{gamma-gamma'}. 
\end{lem}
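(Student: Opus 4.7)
The plan is to produce, for every sufficiently large $n$, a single $s$-subset $\Lambda_\ell^s$ of return time exactly $n$ whose area satisfies $m(\Lambda_\ell^s) \ge C\, n^{-\gamma}$, and then sum: since the sets $\{\tau = k\}$ are disjoint,
\[
m(\{\tau > n\}) = \sum_{k > n} m(\{\tau = k\}) \ge C \sum_{k > n} k^{-\gamma} \ge C_8\, n^{-(\gamma-1)},
\]
where the tail estimate uses $\gamma > 2 > 1$.

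For the construction of $\Lambda_\ell^s$, I would invoke Lemma~\ref{specialLambda_i^s}, which supplies an $s$-subset with $\tau(\Lambda_\ell^s) = n$ whose forward iterates remain inside $D^1_{r_0}$ on the long central window $[m_1, m_2]$ with $m_1 < Q_1$ and $n - m_2 < Q_1$. The construction in that lemma varies only the number of copies of the symbol $P_1$ inside the admissible word $P\bar{W}_1 \bar{P}_i \bar{W}_2 P$, so every sufficiently large $n$ is admissible. Fixing a full-length unstable curve $U^u$ inside $\Lambda_\ell^s$, bounded distortion of the stable holonomy gives
\[
m(\Lambda_\ell^s) \ge C_0\, L_s \cdot L(U^u),
\]
where $L_s > 0$ is a uniform lower bound on the length of the full-length stable curves making up $\Lambda_\ell^s$.

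The heart of the argument is the lower bound $L(U^u) \ge C'\, n^{-\gamma}$. Since $f^n(U^u)$ is a full-length unstable curve in the $u$-subset $\Lambda_\ell^u$, its length is bounded below by a universal constant $L_0 > 0$. I would decompose $[0, n]$ into the three blocks $[0, m_1]$, $[m_1, m_2]$, $[m_2, n]$ and control $L(f^k(U^u))$ across each block separately. On the outer blocks, which have length at most $Q_1$, the orbit stays away from the slow-down region where $f$ is a fixed smooth diffeomorphism, so the unstable length changes by only a bounded multiplicative factor. On the long middle block the relevant estimate is the unstable-curve analog of Lemma~\ref{ratioest}: because system \eqref{batata2} is invariant under the involution $(s_1, s_2, t) \mapsto (s_2, s_1, -t)$, unstable curves for $f$ traverse $D^1_{r_0/2}$ exactly like stable curves for $f^{-1}$ do, and Lemma~\ref{ratioest} yields
\[
\frac{L(f^{m_2}(U^u))}{L(f^{m_1}(U^u))} \le C\,(m_2 - m_1)^{\gamma} \le C\, n^{\gamma}.
\]
Multiplying the three block estimates gives $L(f^n(U^u)) \le C''\, n^{\gamma}\, L(U^u)$, whence $L(U^u) \ge L_0 /(C''\, n^{\gamma})$ and the required bound $m(\Lambda_\ell^s) \ge C\, n^{-\gamma}$ follows.

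The principal obstacle is the transfer of Lemma~\ref{ratioest} from stable to unstable curves via the time-reversal symmetry of the slow-down flow; one must be careful that under the coordinate swap and time reversal the roles of the ``entering'' and ``exiting'' phases are correctly interchanged, so that the upper-bound exponent $\gamma$ (and not $\gamma'$) is the one that appears in the estimate above. A secondary point is to confirm that Lemma~\ref{specialLambda_i^s} can be upgraded to deliver such an $s$-subset for every sufficiently large $n$, and not merely along a subsequence.
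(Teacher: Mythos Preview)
Your overall plan is correct and matches the paper's: pick one $s$-subset $\Lambda_\ell^s$ per return time $N$ from Lemma~\ref{specialLambda_i^s}, show $m(\Lambda_\ell^s)\ge C N^{-\gamma}$, then sum the tail. The one genuine difference is in how you obtain the measure lower bound. You work with the \emph{unstable} width of $\Lambda_\ell^s$ and need to transfer Lemma~\ref{ratioest} to unstable curves via the time-reversal symmetry $(s_1,s_2,t)\mapsto(s_2,s_1,-t)$ of \eqref{batata2}; this is legitimate and does give the upper expansion exponent $\gamma$ as you say, but it requires re-running the whole chain Lemmas~\ref{deltas2upp}--\ref{deltas2low3} for $\Delta s_1$ instead of $\Delta s_2$. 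The paper avoids this entirely by using area preservation: since $G$ preserves $m$, one has $m(\Lambda_\ell^s)=m(G^N(\Lambda_\ell^s))$, and $G^N(\Lambda_\ell^s)$ is a $u$-subset whose area is comparable to the length $L(G^N(\gamma_\ell^s(x)))$ of the image of a full-length \emph{stable} curve. Lemma~\ref{ratioest} then applies verbatim (lower bound, exponent $\gamma$) to give $L(G^N(\gamma_\ell^s(x)))\ge C N^{-\gamma}$. So the paper trades your symmetry argument for a single invocation of area preservation, which is cleaner since Lemma~\ref{ratioest} is stated only for stable curves.

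Your secondary concern is real but shared with the paper: Lemma~\ref{specialLambda_i^s} as stated only guarantees \emph{some} $n>N$, yet both you and the paper sum over all $N>n$. The fix (implicit in the proof of Lemma~\ref{specialLambda_i^s}) is that the admissible word $P\bar W_1\bar P_i\bar W_2 P$ can have $\bar P_i$ of arbitrary length once $\bar W_1,\bar W_2$ are fixed, so every sufficiently large $n$ is realized.
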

\begin{proof}
Using the conjugacy \eqref{mapshi}, it suffices to prove the lemma for the map $G$. Write
$$
\begin{aligned}
m(\{x\in\Lambda: \tau(x)> n\})&=\sum\limits_{N=n+1}^{\infty}m(\{x\in\Lambda: \tau(x)=N\})\\
&=\sum\limits_{N=n+1}^{\infty}\sum\limits_{\Lambda_p^s: \tau(\Lambda_p^s)=N}m(\Lambda_p^s)>\sum\limits_{N=n+1}^{\infty}m(\Lambda_\ell^s),
\end{aligned}
$$
where $\Lambda_\ell^s$ is the set constructed in Lemma
\ref{specialLambda_i^s}. We wish to obtain a polynomial bound for the measure of the set 
$\Lambda_\ell^s$. 

Given $x\in\Lambda_\ell^s$, denote by $\gamma^s_\ell(x):=\gamma^s(x)\cap\Lambda_\ell^s$ (recall that $\gamma^s(x)$ is the full length stable curve through $x$ in the element $P$ of the Markov partition). There is $K_1>0$ such that 
\begin{equation}\label{est1}
m(\Lambda_\ell^s)=m(G^N(\Lambda_\ell^s)) =K_1L(G^N(\gamma^s_\ell(x))),
\end{equation}
where $L$ stands for the length of the curve.

Let $x_j=G^j(x)$ for $j=0,\dots, n$. Assume that $x$ enters the region $D^1_{r_0}$ at time $k_1$ and exits at time $k_2$, i.e.,
\begin{enumerate}
\item $G^j(x)\notin D^1_{r_0}$ if $0\le j<k_1,$ or $k_2<j\le N$;
\item $G^j(x)\in D^1_{r_0}$ if $k_1\le j\le k_2$.
\end{enumerate}
Note that for $0\le j<k_1$ and $k_2<j\le N$ the curve $G^j(\gamma^s_\ell(x))$ lies in the stable cone for the automorphism $A$ at $x_j$ and indeed, is an admissible manifold for $A$ (i.e., for any $y\in\gamma^s_\ell(x)$ the line $T_y\gamma^s_\ell(x)$ lies in the stable cone at $y$). So the length of the curve $\gamma^u_\ell(x)$ expands exponentially outside of the region $D_{r_0}.$ Since by Lemma \ref{specialLambda_i^s}, $k_1<Q_1$ and 
$N-k_2<Q_1$, we have that 
\begin{equation}\label{linest1}
L(\gamma^s_\ell(x))= \lambda^{k_1}L(G^{k_1}(\gamma^s_\ell(x)))\le \lambda^{Q_1}L(G^{k_1}(\gamma^s_\ell(x)))
\end{equation}
and
\begin{equation}\label{linest2}
L(G^{N}(\gamma^s_\ell(x)))=\lambda^{-(N-k_2)}L(G^{k_2}(\gamma^s_\ell(x)))\ge \lambda^{-Q_1}L(G^{k_2}(\gamma^s_\ell(x))),
\end{equation}
where  $\lambda$ is the largest eigenvalue of the matrix $A$. 

By Lemma 5.6 in \cite{THK}, the time the trajectory spends in $
D^1_{r_0}\setminus D^1_{\frac{r_0}{2}}$ is uniformly bounded. Thus, by Lemma \ref{ratioest},
\begin{equation}\label{lowratioest}
L(G^{k_2}(\gamma^s_\ell(x)))>C_6(k_2-k_1)^{-\gamma}L(G^{k_1}(\gamma^s_\ell(x))).
\end{equation}
Since $k_2-k_1<N$, combining Equations \eqref{est1}-\eqref{lowratioest} yields
$$
\begin{aligned}
m(\Lambda_\ell^s)
&\ge K_2L(G^N(\gamma^s_\ell(x)))=K_2\lambda^{-Q_1}L(G^{k_2}(\gamma^s_\ell(x)))\\
&\ge K_2C_6\lambda^{-Q_1}(k_2-k_1)^{-\gamma}L(G^{k_1}(\gamma^s_\ell(x)))\\
&\ge K_2C_6\lambda^{-2Q_1}(k_2-k_1)^{-\gamma}L(\gamma^s_\ell(x))
\ge K_3N^{-\gamma},
\end{aligned}
$$
where $K_2>0$ is a constant and $K_3= K_2C_6\lambda^{2Q_1}L(\gamma^u_\ell(x))$.

Note that $\gamma^s_\ell(x)$ is a full length stable curve in $P$ and hence, has length which is independent of $N$. It follows that 
$$
m(\{x\in\Lambda: \tau(x)>n\})>\sum\limits_{N=n+1}^{\infty}m(\Lambda_\ell^s)>C_8\frac{1}{n^{\gamma-1}},
$$
where $C_8>0$ is a constant. The desired lower bound follows.
\end{proof}

\section{Proof of Theorem \ref{mainthm1}: An upper bound for the tail of the return time}

In this section we obtain an upper polynomial bound for the decay of the tail of the return time. As before we assume that the Markov partition and the number $r_0$ are chosen such that Proposition \ref{partition} holds. Recall that $D_r$ is the union of the disks $D^i_r$ around the points $x_i$ and $P_i$ is the element of the partition containing $x_i$, $i=1,2,3,4$. We have that $D^i_{r_0}\subset P_i$. Using the conjugacy \eqref{mapshi}, it suffices to establish that upper bound for the map $G$. 

Given an $s$-set $\Lambda_i^s\subset P$ with $\tau(\Lambda_i^s)=n$, choose any numbers $k=k(\Lambda_i^s)$, $p=p(\Lambda_i^s)$, and two finite collections of numbers $\{k_m\ge 0\}_{m=1,\dots, p}$ and $\{l_m\ge 0\}_{l=0,\dots, p}$ such that 
\begin{enumerate}
\item $k_1+k_2+\cdots +k_p=k$ and $l_1+l_2+\cdots+l_{p+1}=n-k$;
\item the trajectory of the set $\Lambda_i^s$ under $G^j$, $0\le j\le n$, consecutively spends $l_m$-times outside $D_{r_0}$ and $k_m$-times inside $D_{r_0}$.
\end{enumerate}
Given $0<p<k<n$, consider the collections
$$
\mathcal{S}_{k,n,p}=\{\Lambda_i^s\subset P: \tau(\Lambda_i^s)=n, \, k=k(\Lambda_i^s), \, p=p(\Lambda_i^s)\}. 
$$
\begin{lem}\label{countingLambdas}
There are $0<h<h_{\text{top}}(f)$, $\varepsilon_0>0$, and $C_9>0$ such that 
$\varepsilon_0<h_{\text{top}}(f)-h$ and  
$$
\text{Card}\ \mathcal{S}_{k,n,p}\le C_9\frac{1}{p^2}e^{(h+\varepsilon_0)(n-k)}.
$$ 
\end{lem}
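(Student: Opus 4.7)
The plan is to identify each $\Lambda^s_\ell\in\mathcal{S}_{k,n,p}$ with its symbolic itinerary of length $n$ in the Markov partition $\mathcal{P}$. Since $\Lambda^s_\ell$ is a union of full-length stable leaves in $P$, the forward itinerary is constant on $\Lambda^s_\ell$ and uniquely determines it, so the problem reduces to enumerating admissible $n$-words of the prescribed combinatorial shape.

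I would decompose each such word as $W_0\,V_1\,W_1\,V_2\cdots V_p\,W_p$, where $V_m$ is the $m$-th maximal run inside $D_{r_0}$ and $W_j$ is the $j$-th maximal outside run. By Proposition~\ref{partition}(2), $D^i_{r_0}\subset \text{Int}\,P_i$, so $V_m$ is forced to be the constant word $(P_{i_m})^{k_m}$ for a single index $i_m\in\{1,2,3,4\}$ and $k_m\ge 1$. Thus the internal combinatorial data contributes at most $4^p\,\binom{k-1}{p-1}\,\binom{n-k-1}{p}$ choices (fixed-point sequence, composition of $k$ into $p$ positive parts, composition of $n-k$ into $p+1$ positive parts).

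The dynamical input enters in counting the outside blocks. Let $\Sigma_*$ be the subshift of finite type obtained from the Markov shift on $\mathcal{P}$ by forbidding the four states $P_1,\dots,P_4$; set $h:=h_{\text{top}}(\Sigma_*)$. Standard Perron--Frobenius theory for non-negative matrices gives $h<h_{\text{top}}(f)$: the transition matrix of the original mixing SFT associated with $A$ has spectral radius $e^{h_{\text{top}}(f)}$, and deleting the rows and columns of the $P_i$ (each of which supports a fixed loop) strictly decreases that radius. Consequently the number of admissible words of length $l$ in $\Sigma_*$ with prescribed endpoints is at most $c_0\,e^{hl}$, so the $p+1$ outside runs contribute at most $c_0^{p+1}e^{h(n-k)}$ in total, and
$$
\text{Card}\,\mathcal{S}_{k,n,p}\le 4^p\,c_0^{p+1}\,\binom{k-1}{p-1}\binom{n-k-1}{p}\,e^{h(n-k)}.
$$

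The main obstacle, which I expect to be the core technical step, is absorbing the combinatorial prefactor into $\frac{C_9}{p^2}\,e^{\varepsilon_0(n-k)}$ for $\varepsilon_0>0$ with $h+\varepsilon_0<h_{\text{top}}(f)$. I would use $\binom{N}{p}\le (eN/p)^p$ together with $k(n-k)\le n^2/4$ to get $\binom{k-1}{p-1}\binom{n-k-1}{p}\le e^{2p}(k(n-k)/p^2)^p$, and then verify the inequality $p\log\bigl(4c_0 e^{2}\cdot k(n-k)/p^2\bigr)+2\log p\le \varepsilon_0(n-k)+\log C_9$ throughout the admissible range $1\le p\le n-k-1$; the extra factor $1/p^2$ is recovered from the $p!$ hidden in $\binom{n-k-1}{p}$ via Stirling's formula. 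The delicate point is uniformity of this estimate in $(k,n,p)$ when both $k$ and $n-k$ can be large, but the strict entropy gap $h<h_{\text{top}}(f)$ supplied by Perron--Frobenius provides exactly the room required to complete the bookkeeping.
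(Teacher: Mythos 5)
Your proposal mirrors the paper's proof: identify each $\Lambda^s_\ell$ with an admissible $n$-word, decompose into inside blocks $(P_{i_m})^{k_m}$ and outside blocks, count the $4^p\binom{k-1}{p-1}\binom{n-k-1}{p}$ combinatorial choices, and bound the outside fillings by $e^{h(n-k)}$ with $h<h_{\text{top}}$. Your Perron--Frobenius treatment of the restricted subshift $\Sigma_*$ is a cleaner way to obtain that entropy gap than the paper's citation of \cite{KH}, but it is the same input. One ingredient you omit, which the paper uses crucially, is the lower bound $l_m\ge Q$ coming from Propositions \ref{partition} and \ref{f is Young}; it forces $p+1<(n-k)/Q$ and is precisely how the factors $p^2$, $4^p$, and $\binom{n-k-1}{p}$ get converted into something of order $e^{\varepsilon_0(n-k)}$ once $Q$ is taken large.

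The step you flag as ``the delicate point'' --- absorbing $\binom{k-1}{p-1}$ --- is a genuine gap, and your announced resolution does not close it. Your inequality
$p\log\bigl(4c_0e^2\,k(n-k)/p^2\bigr)+2\log p\le \varepsilon_0(n-k)+\log C_9$
cannot hold uniformly: take $p=2$ and hold $n-k$ fixed while $k\to\infty$; the left side grows like $2\log k$ while the right side is bounded, and the entropy gap only buys room of size $e^{\varepsilon_0(n-k)}$, which is useless once $n-k$ is bounded and $k$ is large. The constraint $p<(n-k)/Q$ bounds $p$, not $k$, so it does not save you either. For what it is worth, the paper's own treatment of the same step is equally fragile: it asserts $\binom{k-1}{p-1}<4^{p-1}$ after ``we may assume $p-1=\tfrac{k-1}{2}$,'' but $p$ is an independent parameter of $\mathcal{S}_{k,n,p}$, and for $p<(k+1)/2$ the displayed inequality is simply false (for instance $\binom{299}{1}=299>4$). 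So both your write-up and the paper leave the regime ``$p$ small, $k$ large, $n-k$ bounded'' unaddressed; closing the lemma as stated requires either a dynamical argument preventing $k$ from being large while $n-k$ stays bounded, or a weaker bound carrying an explicit polynomial factor in $k$ which one then checks is dominated by the $k^{-\gamma'}$ decay supplied by Lemma \ref{upper-est}. You correctly located the critical issue, but the claim that the Perron--Frobenius gap ``provides exactly the room required'' is not substantiated.
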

\begin{proof}
Note that the cardinality of $\mathcal{S}_{k,n,p}$ does not exceed the number of symbolic words of length $n$ that start and end at $P$ and contain exactly $k$ symbols $P_j$, $j=1,2,3,4$. Since $A$ is topologically mixing, the latter is exactly the number of words of length $n-k$ that start and end at $P$. By Corollary 1.9.12 and Proposition 3.2.5 in \cite{KH}, the number of such words grows exponentially with an exponent that does not exceed $(n-k)h$ where $0<h<h_{\text{top}}(A)$.

The number of different ways the iterates of $\Lambda_i^s$ can enter $D_{r_0}$ exactly $p$ times and stay in this set exactly $k$ times does not exceed the number of ways in which the number $k$ can be written as a sum of $p$ positive integers (where order matters) which is equal to $\binom{k-1}{p-1}$. The number of different ways the iterates of 
$\Lambda_i^s$ can spend outside $D_{r_0}$ exactly $p+1$ times is equal to the number of ways in which the number $n-k$ can be written as a sum of $p+1$ positive integers which is $\binom{n-k-1}{p}$. Since iterates of $\Lambda_i^s$ may enter any of the disks 
$D^i_{r_0},\ i=1,2,3,4$, we obtain 
$$
\text{Card }\mathcal{S}_{k,n,p}\le C_94^p\binom{k-1}{p-1}\binom{n-k-1}{p}e^{h(n-k)}.
$$ 
Write
$$
\text{Card }\mathcal{S}_{k,n,p}\le \frac{C_9}{p^2}p^24^p\binom{k-1}{p-1}\binom{n-k-1}{p}e^{h(n-k)}.
$$ 
To prove the lemma we wish to estimate $p^24^p\binom{k-1}{p-1}\binom{n-k-1}{p}$ and we claim that there is $\varepsilon_0>0$ such that $\binom{k-1}{p-1} <e^{\varepsilon_0(n-k)}$.

To this end note that by Propositions \ref{partition} and \ref{f is Young}, it takes 
$\Lambda_i^s$ at least $Q$ iterates before it enters $D_{r_0}$ again. This implies that 
$n=k+l_1+\cdots+l_{p+1}>k+(p+1)Q$ that is $p+1<\frac{n-k}{Q}$.

For a fixed $k$ note that $\binom{k-1}{p-1}$ achieves its maximum when $p-1=[\frac{k-1}{2}]$ or  $p-1=[\frac{k-1}{2}]+1$. We may assume $p-1=\frac{k-1}{2}$. Then using the asymptotic formula $\binom{m}{l}\sim(\frac{me}{l})^l$, we obtain that
$$
\binom{k-1}{p-1}<\binom{2p-2}{p-1}<4^{p-1}=e^{(p-1)\ln 4}<e^{\frac{n-k}{Q}\ln 4}.
$$
To estimate $\binom{n-k-1}{p}$ observe that $p$ does not exceed $\frac{n-k}{Q}$. Hence, using the above asymptotic formula, we find that
$$
\begin{aligned}
\binom{n-k-1}{p}&<\binom{n-k}{\frac{n-k}{Q}}<
\Big(\frac{(n-k)e}{\frac{n-k}{Q}}\Big)^{\frac{n-k}{Q}}\\
&<e^{\frac{n-k}{Q}\ln\frac{(n-k)e}{\frac{n-k}{Q}}}<e^{\frac{n-k}{Q}\ln (Qe)}.
\end{aligned}
$$
Finally, note that 
$$
p^24^p<e^{2\ln p+p\ln4}<e^{2p+p\ln4}<e^{\frac{n-k}{Q}(\ln4+2)}.
$$
Now, given any sufficiently small $\varepsilon_0>0$, one can choose $Q$ large enough so that $\frac{\ln4+2}{Q}+\frac{\ln4}{Q}+\frac{\ln (Qe)}{Q}<\varepsilon_0$. Combining the above estimates we obtain
$$
4^p\binom{k-1}{p-1}\binom{n-k-1}{p}e^{h(n-k)}<e^{(n-k)\varepsilon_0}e^{h(n-k)}=e^{(n-k)\varepsilon_0+h}
$$
and hence,
$$
\text{Card}\ \mathcal{S}_{k,n,p}\le C_9\frac{1}{p^2}e^{(h+\varepsilon_0)(n-k)}.
$$ 
This completes the proof of the lemma.
\end{proof}

\begin{lem}\label{upper-est}
There exists $\varepsilon_0>0$ such that for any $\Lambda_i^s\in\mathcal{S}_{k,n,p}$, 
$$
m(\Lambda_i^s)\le C_{10}k^{-\gamma'}e^{(-\log\lambda+\varepsilon_0)(n-k)},
$$
where $C_{10}>0$ is a constant and $\gamma'$ is given by \eqref{gamma-gamma'}. 
\end{lem}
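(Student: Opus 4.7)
The plan is to reduce bounding $m(\Lambda_i^s)$ to controlling the length of the image under $G^n$ of a full-length stable curve, and then to chain Lemma~\ref{ratioest} along the alternating excursions outside $D_{r_0}$ and sojourns inside. To this end I would fix $x\in\Lambda_i^s$ and let $\gamma^s:=\gamma^s(x)$ be the full-length stable curve through $x$ in the partition element $P$. Using the product structure $\Lambda_i^s=\bigcup_{y\in U^u}\gamma^s(y)$, area preservation of $G^n$, and the fact that all full-length stable curves in $P$ have comparable lengths while $G^n(U^u)$ is a full-length unstable curve in the element containing $\Lambda_i^u$, I would derive
$$
m(\Lambda_i^s)\le K_1\,L\bigl(G^n(\gamma^s)\bigr)
$$
for an absolute constant $K_1>0$, in parallel with the reduction used in the proof of Lemma~\ref{lowerbound}.

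Next, I would decompose $\gamma^s,G(\gamma^s),\dots,G^n(\gamma^s)$ following the block structure $l_1,k_1,l_2,k_2,\dots,k_p,l_{p+1}$. Outside $D_{r_0}$ one has $G=A$, so each excursion contributes exactly the factor $\lambda^{-l_m}$, producing the aggregate $\lambda^{-(n-k)}$. On each sojourn inside $D_{r_0}$, Lemma~\ref{ratioest}, combined with the uniform bound on the transit time through the buffer $D_{r_0}\setminus D_{r_0/2}$ (Lemma~5.6 of \cite{THK}) to pass from entry/exit times of $D_{r_0/2}$ to those of $D_{r_0}$ at the cost of a bounded factor per sojourn, yields a contraction at most $C_7\,k_m^{-\gamma'}$. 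Telescoping gives
$$
L\bigl(G^n(\gamma^s)\bigr)\le L(\gamma^s)\,\lambda^{-(n-k)}\,C_7^{p}\prod_{m=1}^p k_m^{-\gamma'}.
$$

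Finally, I would absorb the residual $p$-dependent factors into $e^{\varepsilon_0(n-k)}$. Since $k_m\ge 1$ and $\sum k_m=k$, the product $\prod k_m$ is minimized at $(k-p+1,1,\dots,1)$, so splitting into the cases $p\le k/2$ and $p>k/2$ yields
$$
\prod_{m=1}^p k_m^{-\gamma'}\le (k-p+1)^{-\gamma'}\le K_2\,(1+p)^{\gamma'}k^{-\gamma'}.
$$
By Proposition~\ref{partition}, $p+1\le (n-k)/Q$, so $(1+p)^{\gamma'}$ grows only polynomially in $n-k$ while $C_7^p\le e^{(n-k)(\log C_7)/Q}$; choosing $Q$ large enough in terms of $\varepsilon_0$ makes the product of both factors at most $e^{\varepsilon_0(n-k)}$, and the desired estimate follows.

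The main technical obstacle I foresee is the telescoping step: chaining Lemma~\ref{ratioest} across $p$ successive sojourns requires that its hypotheses, together with the uniform bound on transit time through $D_{r_0}\setminus D_{r_0/2}$, hold at every re-entry with a constant that is independent of $m$ and $p$. Verifying that $G^j(\gamma^s)$ remains an admissible stable curve with the needed cone estimates and smallness of the initial relative discrepancy at each re-entry is the key technical point.
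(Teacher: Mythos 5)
Your proof follows the same strategy as the paper: reduce $m(\Lambda_i^s)$ to $K_1\,L\bigl(G^n(\gamma^s)\bigr)$, telescope Lemma~\ref{ratioest} and the linear stretching by $\lambda$ of $A$ across the block structure $l_1,k_1,\dots,k_p,l_{p+1}$ to get $L(G^n(\gamma^s))\le L(\gamma^s)\,\lambda^{-(n-k)}\,C_7^{p}\prod_{m}k_m^{-\gamma'}$, and then absorb the $p$-dependent factors into $e^{\varepsilon_0(n-k)}$ using $p+1\le(n-k)/Q$ with $Q$ chosen large. The one place you diverge is in passing from $\prod_m k_m^{-\gamma'}$ to $k^{-\gamma'}$: the paper arranges $k_m\ge 2$ by shrinking $r_0$ and then uses the chain $k_1\cdots k_p\ge k_{\max}\,2^{p-1}\ge k_{\max}\,p\ge k$ to obtain $\prod k_m^{-\gamma'}\le k^{-\gamma'}$ with no residual factor, whereas you keep $k_m\ge 1$, take the extremal configuration $(k-p+1,1,\dots,1)$ to get $\prod k_m^{-\gamma'}\le (k-p+1)^{-\gamma'}\le K_2(1+p)^{\gamma'}k^{-\gamma'}$, and fold the extra $(1+p)^{\gamma'}$ into $e^{\varepsilon_0(n-k)}$ together with $C_7^p$. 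Both arguments are correct; your variant avoids the extra hypothesis $k_m\ge 2$ at the cost of one more (harmless, polynomial) factor to absorb. The concern you flag about applying Lemma~\ref{ratioest} uniformly across the $p$ sojourns is handled in the paper exactly as you propose, via the uniform bound on the transit time through $D_{r_0}\setminus D_{r_0/2}$ from Lemma~5.6 of~\cite{THK}.
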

\begin{proof}
Note that by \eqref{est1}), 
$m(\Lambda_i^s)=m(G^n(\Lambda_i^s))= K_1L(G^n(\gamma^s_i(x)))$ and that the length of the backward iterates of $\gamma^s_i$ lying outside the region $D_{r_0}$ are stretched by the largest eigenvalue $\lambda$ of the matrix $A$. Note also that every time the iterates of $\gamma^s_i$ enter the region we have an upper estimate for its length according to Lemma \ref{ratioest} (note that we can apply this lemma in the region $D_{r_0}$ since by Lemma 5.6 in \cite{THK} the time spent in $D_{r_0}\setminus D_{\frac{r_0}{2}}$ is uniformly bounded). Thus,
$$
\begin{aligned}
m(\Lambda_i^s)&=m(G^n(\Lambda_i^s))= K_1L(G^n(\gamma^s_i(x)))\\
&=K_1\lambda^{-l_{p+1}}L(G^{n-l_{p+1}}(\gamma^s_i(x)))\\
&\le K_1C_7\lambda^{-l_{p+1}}k_p^{-\gamma'}L(G^{n-(l_1+k_1)}(\gamma^s_i(x)))\le\cdots\\
&\le K_1C_7^p\lambda^{-(l_{p+1}+\cdots+l_1)}k_p^{-\gamma'}k_{p-1}^{-\gamma'}\cdots k_1^{-\gamma'}L(\gamma^s_i(x)).
\end{aligned}
$$
Note that $\gamma^s_i(x)$ is a full length stable curve in $P$ and hence, has length independent of $n$. One can also assume that $k_i\ge 2$ by making $r_0$ smaller if necessary. This implies 
$$
k_1k_2\cdots k_p\ge k_{\max}2^{p-1}\ge k_{\max}p\ge\sum_{i=1}^{p}k_i=k,
$$
where $k_{\max}$ denotes the largest of $k_i's$.

In addition, $C_6^p=e^{p\ln C_6}<e^{\frac{n-k}{Q}\ln C_6}<e^{\varepsilon_0(n-k)}$ for sufficiently small $\varepsilon_0>0$ if one chooses $Q$ large. Therefore,
$$
m(\Lambda_i^s)<K_1e^{\varepsilon_0(n-k)}\lambda^{-(n-k)}k^{-\gamma'}<C_{10}k^{-\gamma'}e^{(-\log\lambda+\varepsilon_0)(n-k)}.
$$
This completes the proof of the lemma.
\end{proof}

\begin{lem}\label{upperbound}
There exists $C_{11}>0$ such that
$$
m(\{x\in \Lambda: \tau(x)>n\})<C_{11}n^{-(\gamma'-1)},
$$
see \eqref{gamma-gamma'} for the definition of $\gamma'$.
\end{lem}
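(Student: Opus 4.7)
The plan is to stratify the tail set as
$$\{x\in\Lambda:\tau(x)>n\}=\bigsqcup_{N>n}\bigsqcup_{k,p}\bigsqcup_{\Lambda_i^s\in\mathcal{S}_{k,N,p}}\Lambda_i^s,$$
and then combine the counting estimate of Lemma \ref{countingLambdas} with the per-element measure estimate of Lemma \ref{upper-est}. This yields
$$m(\{\tau>n\})\le\sum_{N>n}\sum_{k,p}\,\text{Card}(\mathcal{S}_{k,N,p})\cdot\max_{\Lambda_i^s\in\mathcal{S}_{k,N,p}}m(\Lambda_i^s)\le C\sum_{N>n}\sum_{k,p}p^{-2}k^{-\gamma'}e^{-\delta_0(N-k)},$$
where $\delta_0:=\log\lambda-h-2\varepsilon_0$. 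The crucial point is to ensure $\delta_0>0$. This uses two things: first, since $f_{\T^2}$ is topologically conjugate to $A$, we have $h_{\text{top}}(f)=\log\lambda$, and Lemma \ref{countingLambdas} already guarantees $h<h_{\text{top}}(f)$; second, one can shrink $\varepsilon_0$ as necessary by taking $Q$ large in Lemma \ref{countingLambdas} (and enlarging $r_0$ appropriately), so that $h+2\varepsilon_0<\log\lambda$.

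Next I would estimate the inner double sum for a fixed $N$. The $p$-summation contributes a harmless constant because $\sum_{p\ge 1}p^{-2}<\infty$, leaving the one-variable sum
$$\Sigma(N):=\sum_{1\le k\le N}k^{-\gamma'}e^{-\delta_0(N-k)}.$$
I would split $\Sigma(N)$ at $k=N/2$. On the range $k>N/2$ use $k^{-\gamma'}\le 2^{\gamma'}N^{-\gamma'}$ combined with $\sum_{k\le N}e^{-\delta_0(N-k)}=O(1)$, which contributes $O(N^{-\gamma'})$. On the range $1\le k\le N/2$ use $e^{-\delta_0(N-k)}\le e^{-\delta_0 N/2}$ and $\sum_{k\ge 1}k^{-\gamma'}<\infty$ (valid since $\gamma'>2>1$), which contributes an exponentially small term $O(e^{-\delta_0 N/2})$ and is thus absorbed into $O(N^{-\gamma'})$. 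The trajectories with $k=0$ (never visiting $D_{r_0}$) must be treated separately: there are at most $e^{hN}$ of them, each of measure at most $\lambda^{-N}$, contributing another $O(e^{-\delta_0 N})$, which is again dominated. Therefore
$$\sum_{\Lambda_i^s:\,\tau(\Lambda_i^s)=N}m(\Lambda_i^s)\le C'N^{-\gamma'}.$$

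Finally, one sums over $N>n$ and compares with the tail of the integral:
$$m(\{\tau>n\})\le C'\sum_{N>n}N^{-\gamma'}\le C'\int_n^{\infty}x^{-\gamma'}\,dx=\frac{C'}{\gamma'-1}\,n^{-(\gamma'-1)},$$
which is finite because $\gamma'>2$, in particular $\gamma'-1>1$. This yields the claimed bound with $C_{11}=C'/(\gamma'-1)$.

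The main obstacle is the interplay between three exponential rates: the expansion rate $\log\lambda$ coming from Lemma \ref{upper-est}, the entropy rate $h$ coming from the symbolic counting in Lemma \ref{countingLambdas}, and the small $\varepsilon_0$ slack which enters both from enlarging the word-counting exponent and from the factor $C_7^p$ in the measure estimate. A careful chain of choices $r_0\to Q\to\varepsilon_0\to\delta_0$ is required to open a strict spectral gap $\delta_0>0$. Once this gap is secured, the polynomial factor $k^{-\gamma'}$ inherited from Lemma \ref{upper-est} is exactly what produces the $N^{-\gamma'}$ level bound and hence, by one further summation, the claimed $n^{-(\gamma'-1)}$ tail bound.
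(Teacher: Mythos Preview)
Your proof is correct and follows essentially the same route as the paper: stratify by $(N,k,p)$, multiply the counting bound of Lemma~\ref{countingLambdas} by the per-element bound of Lemma~\ref{upper-est}, sum out $p$ via $\sum p^{-2}<\infty$, reduce to $\Sigma(N)=\sum_{k\le N}k^{-\gamma'}e^{-\delta_0(N-k)}$, show $\Sigma(N)=O(N^{-\gamma'})$, and finally sum over $N>n$.

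The only substantive difference is in how you estimate $\Sigma(N)$. The paper rewrites it as $e^{-\delta_0 N}\sum_{k\le N}e^{\delta_0 k}k^{-\gamma'}$ and invokes the Stolz--Ces\`aro theorem (observing that $u_k=e^{\delta_0 k}k^{-\gamma'}$ satisfies $u_{k+1}-u_k\sim u_k$, whence $\sum_{k\le N}u_k\sim u_N$). Your split at $k=N/2$ is more elementary and equally effective; it also has the virtue of making the exponential-versus-polynomial competition completely explicit. Your separate treatment of the $k=0$ case (trajectories never entering $D_{r_0}$) is a point the paper leaves implicit. One small slip: to make $Q$ large you must \emph{shrink} $r_0$, not enlarge it (cf.\ Proposition~\ref{partition}).
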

Note that
$$
m(\{x\in \Lambda:\tau(x)=n\})
\le\sum\limits_{k=1}^n\sum\limits_{p=1}^k\max_{\Lambda_i^s\in\mathcal{S}_{k,n,p}}\{m(\Lambda_i^s)\}\text{Card }\mathcal{S}_{k,n,p}.
$$
Therefore, by Lemmas \ref{countingLambdas} and \ref{upper-est}, we have 
\begin{equation}\label{upmeasureest2}
\begin{aligned}
m(\{x\in\Lambda : \tau(x)&= n\})\\
&\le \sum\limits_{k=1}^{n}\sum_{p=1}^{k}\frac{1}{p^2}C_9e^{(h+\varepsilon_0)(n-k)}C_{10}e^{(\varepsilon_0-\log\lambda)(n-k)}k^{-\gamma'}\\
&<C_9C_{10}\frac{\pi^2}{6}e^{-\delta n}\sum\limits_{k=1}^{n}e^{\delta k}k^{-\gamma'},
\end{aligned}
\end{equation}
where $\delta=2\varepsilon_0+\log\lambda-h>0$ if $\varepsilon_0$ is sufficiently small.

To estimate $\sum\limits_{k=1}^{n}e^{\delta k}k^{-\gamma'}$ set 
$u_k=e^{\delta k}k^{-\gamma'}$ and note that 
$u_{k+1}-u_k\sim e^{\delta k}k^{-\gamma'}=u_k$. Since $\sum\limits_{k=1}^n u_k$ is positive and diverges, by Stolz-Cesaro theorem,
$$
\sum\limits_{k=1}^n u_k\sim \sum\limits_{k=1}^n u_{k+1}-u_{k}=u_{n+1}-u_1\sim e^{\delta n}n^{-\gamma'}.
$$
Therefore, 
$$
m(\{x\in \Lambda : \tau(x)= n\})\le C_9C_{10} e^{-\delta n}\sum\limits_{k=1}^{n}e^{\delta k}k^{-\gamma'}<C_9C_{10}n^{-\gamma'}.
$$
Thus, we have the following estimate of the tail 
$$
m(\{x\in \Lambda : \tau(x)>n\})=\sum\limits_{k> n}m(\{x\in \Lambda : \tau(x)= k\})< C_{11}n^{-(\gamma'-1)}
$$
for some $C_{11}>0$. This concludes the proof of the Lemma and the upper bound.

\section{Proof of Theorem \ref{mainthm1}: Carrying the slow-down map to a surface}

In this section we show how to carry over the slow-down map of the torus to a measure preserving diffeomorphism of any surface. Following \cite{katan}, we will construct the maps $\varphi_1,\varphi_2, \varphi_3$ such that the following diagram is commutative: 
\[
\begin{tikzcd}[column sep=3em, row sep=2em]
\T^2 \arrow{r}{\varphi_1} \arrow[swap]{d}{f_{\T^2}} & S^2\arrow{r}{\varphi_2} \arrow{d}{f_{S^2}} & D^2\arrow {r}{\varphi_3} \arrow{d}{f_{D^2}} & M\arrow{d}{f_M} \\%
\T^2 \arrow{r}{\varphi_1}& S^2\arrow{r}{\varphi_2} & D^2\arrow{r}{\varphi_3} &M
\end{tikzcd}
\]
We stress that while construction of the maps $\varphi_1$ and $\varphi_2$ follows \cite{katan}, construction of the map $\varphi_3$ is quite different, since we have to deal with finite regularity of the slow-down map.

First, using the slow down map we construct a diffeomorphism of the sphere $S^2$. 
\begin{prop}[see \cite{katan}]
There exists a map $\varphi_1\colon\T^2\to S^2$ satisfying:
\begin{enumerate}
\item $\varphi_1$ is a double branched covering, is one-to-one on each branch, and
$C^\infty$ everywhere except at the points $x_i$, $i=1,2,3,4$ where it branches;
\item $\varphi_1\circ I =\varphi_1$ where $I\colon \T^2\to \T^2$ is the involution map  given by $I(t_1,t_2)=(1-t_1, 1-t_2)$;
\item $\varphi_1$ preserves area, i.e., $(\varphi_1)_*m=m_{S^2}$ where $m_{S^2}$ is the area in $S^2$;
\item there exists a coordinate system in each disk $D^i_{r_0}$ such that 
\[
\varphi_1(s_1,s_2)=\left(\frac{{s_1}^2-{s_2}^2}{\sqrt{{s_1}^2+{s_2}^2}},
\frac{2s_1s_2}{\sqrt{{s_1}^2+{s_2}^2}}\right);
\]
\item  The map $f_{S^2}:=\varphi_1\circ f_{\T^2}\circ\varphi_1^{-1}$ preserves the area.
\end{enumerate}
\end{prop}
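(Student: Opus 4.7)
The plan is to realize $\varphi_1$ as the quotient map of $\T^2$ by the involution $I(t_1,t_2)=(1-t_1,1-t_2)$, which on $\T^2=\R^2/\Z^2$ is simply $x\mapsto -x$ (since $1-t\equiv -t\pmod 1$). Its fixed-point set is exactly $\{x_1,x_2,x_3,x_4\}$, and topologically $\T^2/I$ is a sphere with four cone points of angle $\pi$ at the images of these fixed points. Once $\varphi_1$ is identified with this quotient projection, property (2) is automatic, while (1), (3), (4) become local calculations near the branch points, and (5) reduces to checking $I$-equivariance of $f_{\T^2}$.

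First I would verify the local model. Writing $s_1=r\cos\theta$, $s_2=r\sin\theta$ in coordinates centered at $x_i$, the formula in (4) becomes $(r,\theta)\mapsto(r,2\theta)$ in polar coordinates on the target plane. This is manifestly the standard orbit map for the $\Z/2$-action $(s_1,s_2)\mapsto(-s_1,-s_2)$; it is $C^\infty$ and 2-to-1 on $D^i_{r_0}\setminus\{x_i\}$, branches only at $x_i$, and has Jacobian $2r$, so it doubles the planar area element. Outside $\bigcup_i D^i_{r_0}$ the involution $I$ acts freely, so projection to $(\T^2\setminus\bigcup_i D^i_{r_0})/I$ is a smooth local diffeomorphism. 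I would then glue these two descriptions along annular neighborhoods of $\partial D^i_{r_0}$: both maps factor through the same set-theoretic quotient, and the explicit local formula extends the abstract quotient by a $C^\infty$ diffeomorphism of the target that can be absorbed into the smooth structure of $S^2$.

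For property (3), the local doubling of area is exactly what is needed: since $\varphi_1$ is 2-to-1 with Jacobian $2$ on each sheet, any small region $B\subset S^2$ away from branch points satisfies $m(\varphi_1^{-1}(B))=m_{S^2}(B)$, and the same identity holds outside the disks because $I$ acts freely and isometrically there. For (5) it suffices to verify that $f_{\T^2}\circ I=I\circ f_{\T^2}$, for then $f_{S^2}=\varphi_1\circ f_{\T^2}\circ\varphi_1^{-1}$ is well-defined on $S^2$ and its area-preservation is an immediate corollary of (3). The linear automorphism $A$ commutes with $x\mapsto -x$ trivially. The slow-down field \eqref{batata2} is $I$-equivariant at each $x_i$ because $\psi(s_1^2+s_2^2)$ is even while the linear factors $s_i$ are odd, so the time-$1$ map $g^i$, and therefore $G$, commutes with $I$. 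The correction $\phi$ in \eqref{mapshi} has the form $g(s_1^2+s_2^2)\cdot(s_1,s_2)$ and is thus also $I$-equivariant. Consequently $f_{\T^2}=\phi\circ G\circ\phi^{-1}$ commutes with $I$ and descends.

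The main obstacle I foresee is the smooth gluing step: the abstract quotient $\T^2/I$ is a $C^\infty$ manifold only after one chooses appropriate charts near the cone points, and one must verify that the explicit formula (4) fits into such a chart after, if necessary, a smooth reparametrization of $S^2$ away from the branch points. Everything else reduces either to the direct polar-coordinate computation above or to an immediate consequence of the $I$-symmetry of the slow-down construction inherited from the symmetric choice of disks $D^i_{r_0}$ and the symmetric profile $\psi(s_1^2+s_2^2)$.
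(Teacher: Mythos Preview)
The paper does not actually prove this proposition: it is stated with the attribution ``see \cite{katan}'' and used as a black box. Your outline is exactly the standard Katok construction and is correct in all essentials --- realizing $\varphi_1$ as the quotient by the hyperelliptic involution $I$, checking the local polar model $(r,\theta)\mapsto(r,2\theta)$ with Jacobian $2$, and verifying $I$-equivariance of $A$, of the slow-down vector field, and of the correction $\phi$ so that $f_{\T^2}$ descends. The gluing issue you flag is real but mild: the abstract orbifold $\T^2/I$ already carries a canonical smooth structure in which the charts at the cone points are precisely $z\mapsto z^2$ (equivalently your formula (4)), so no ad hoc reparametrization is needed beyond choosing the eigenbasis coordinates $(s_1,s_2)$ at each $x_i$.
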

The sphere can be unfolded onto the unit disk $D^2$ and the map $f_{S^2}$ can be carried over to an  area preserving map $f_{D^2}$ of the disk which is identity on the boundary of the disk. To see this set $p_i=\varphi_1(x_i)$, $i=1,2,3,4$. In a small neighborhood of the point $p_4$ we define a map $\varphi_2$ by
\[
\varphi_2(\tau_1,\tau_2)=\left(\frac{\tau_1\sqrt{1-\tau_1^2-\tau_2^2}}
{\sqrt{\tau_1^2+\tau_2^2}}, \,\frac{\tau_2\sqrt{1-\tau_1^2-\tau_2^2}}
{\sqrt{\tau_1^2+\tau_2^2}}\right).
\]
One can extend $\varphi_2$ to an area preserving $C^\infty$ diffeomorphism (still denoted by $\varphi_2$) between $S^2\setminus\{p_4\}$ and the interior of the unit disk $D^2$. The map
\begin{equation}\label{kip}
f_{D^2}:=
\begin{cases}  \varphi_2\circ f_{S^2}\circ\varphi_2^{-1} \ \text{on}\ \text{int}D^2\\
Id \ \ \ \ \ \ \ \ \ \  \ \ \  \ \ \ \text{on}\  \partial D^2
\end{cases}
\end{equation}
is a diffeomorphism of $D^2$ that preserves area $m_{D^2}.$
\begin{prop}\label{smoothnessindisk}
The maps $f_{S^2}$ and $f_{D^2}$ are of class of smoothness $C^{2+2\kappa}$ where 
$\kappa=\frac{\a}{1-\a}$.
\end{prop}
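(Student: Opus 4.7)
The smoothness of $f_{S^2}$ and $f_{D^2}$ away from the ramification points is essentially automatic: since $\varphi_1$ is a $C^\infty$ diffeomorphism on $\T^2\setminus\{x_1,x_2,x_3,x_4\}$ (locally, away from the branching), conjugation by $\varphi_1$ preserves the $C^{2+2\kappa}$ regularity of $f_{\T^2}$ established in Proposition~\ref{f is1plusbeta}. Similarly $\varphi_2$ is $C^\infty$ on $S^2\setminus\{p_4\}$, so $f_{D^2}$ is $C^{2+2\kappa}$ on the open set $\text{int}D^2\setminus\varphi_2(\{p_1,p_2,p_3\})$; and at each of the three interior points $\varphi_2(p_i)$, $i=1,2,3$, we can absorb $\varphi_2$ into the local analysis because it is a genuine $C^\infty$ diffeomorphism there. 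Hence the only two issues are (i) regularity of $f_{S^2}$ at each $p_i$, and (ii) regularity of $f_{D^2}$ at the boundary circle $\partial D^2$ (which corresponds to the single point $p_4$ on the sphere).

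For (i), I would mimic the Hamiltonian argument of Proposition~\ref{f is1plusbeta}. Because $\varphi_1$ is area preserving, $f_{S^2}$ is the time--$1$ map of the Hamiltonian flow generated by $\tilde H:=H_2\circ\varphi_1^{-1}$. Using the explicit formula from property~(4) of $\varphi_1$, writing $(s_1,s_2)=(r\cos\theta,r\sin\theta)$ gives $(\tau_1,\tau_2)=(r\cos 2\theta,r\sin 2\theta)$, so that $\tau_1^2+\tau_2^2=s_1^2+s_2^2$ and $2s_1s_2=\tau_2\sqrt{\tau_1^2+\tau_2^2}$. Substituting into $H_2=s_1s_2(s_1^2+s_2^2)^\kappa\log\lambda$ yields
$$
\tilde H(\tau_1,\tau_2)=\frac{\log\lambda}{2}\,\tau_2\,(\tau_1^2+\tau_2^2)^{\kappa+\frac12}.
$$
A direct differentiation shows that each second partial derivative of $\tilde H$ is a finite linear combination of terms of the shape $\tau_2^a\tau_1^b(\tau_1^2+\tau_2^2)^{\kappa+\frac12-\frac{a+b}{2}}$ with $a+b\ge 1$; in polar coordinates each such term is bounded by $C r^{2\kappa}\cdot(\text{bounded angular factor})$. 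Exactly as in the proof of Proposition~\ref{f is1plusbeta}, this gives H\"older continuity with exponent $2\kappa$ at the origin, and since $\tilde H$ is $C^\infty$ off the origin one gets $\tilde H\in C^{2+2\kappa}$. Therefore $f_{S^2}\in C^{2+2\kappa}$.

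For (ii), I would treat the boundary of the disk as the principal obstacle. Inverting the given formula for $\varphi_2$, one finds
$$
\varphi_2^{-1}(x,y)=\frac{\sqrt{1-x^2-y^2}}{\sqrt{x^2+y^2}}(x,y),
$$
so the punctured annulus near $\partial D^2$ is mapped onto a punctured disk near $p_4$. Since $p_4$ is a fixed point of $f_{S^2}$ at which the Hamiltonian $\tilde H$ and all its first and second derivatives vanish to order $2\kappa$, the time--$1$ flow satisfies $f_{S^2}(\tau)-\tau=O(|\tau|^{1+2\kappa})$ together with analogous estimates for $Df_{S^2}-\mathrm{Id}$ up to second order. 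Transporting these estimates through $\varphi_2$ and using that in radial coordinates on $D^2$ the quantity $|\tau|$ corresponds to $\sqrt{1-x^2-y^2}$, one obtains that $f_{D^2}(x,y)-(x,y)$ together with its first and second derivatives vanishes at $\partial D^2$ at a rate that gives H\"older continuity with exponent $2\kappa$. Matched against the identity extension, this yields $f_{D^2}\in C^{2+2\kappa}$.

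The main obstacle is the second step: the push-forward Hamiltonian $\hat H=\tilde H\circ\varphi_2^{-1}$ is \emph{not} itself $C^2$ up to $\partial D^2$ in general, because $\varphi_2^{-1}$ is singular there. So unlike the sphere case one cannot simply invoke a Hamiltonian regularity argument; one has to estimate the time--$1$ map directly, exploiting that the vector field on the sphere vanishes to order $1+2\kappa$ at $p_4$ and that the singular part of $\varphi_2$ near $\partial D^2$ is a precise inverse-radial factor whose contribution is absorbed into the already H\"older-small deviation of the flow from the identity.
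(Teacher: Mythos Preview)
Your treatment of $f_{S^2}$ coincides with the paper's: both compute the pushed-forward Hamiltonian (the paper's $H_3$ is your $\tilde H$, with exponent $\delta=\kappa+\tfrac12=\tfrac{1}{1-\alpha}-\tfrac12$) and reduce to showing that $g(x,y)=y(x^2+y^2)^{\delta}$ has $C^{2\kappa}$-H\"older second partials at the origin.

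For $f_{D^2}$ you diverge from the paper. The paper does \emph{not} treat $\partial D^2$ as a separate obstacle: it writes down the disk Hamiltonian
\[
H_4(x_1,x_2)=\frac{x_2}{\sqrt{x_1^2+x_2^2}}\,(1-x_1^2-x_2^2)^{1/(1-\alpha)}\log\lambda,
\]
asserts that ``$H_3$ and $H_4$ are of the same regularity'', and lets the single computation for $g$ cover both maps. You instead decline to run the Hamiltonian argument at the boundary (since $\varphi_2^{-1}$ collapses $\partial D^2$ to a point) and propose to estimate the deviation of the time-$1$ map from the identity directly. Your skepticism is not idle: near $\partial D^2$ the function $H_4$ is a smooth non-vanishing factor times $(1-x_1^2-x_2^2)^{1/(1-\alpha)}$, and for the relevant range $\alpha<\tfrac12$ one has $\tfrac{1}{1-\alpha}<2$, so the second partials formally carry a factor $(1-x_1^2-x_2^2)^{1/(1-\alpha)-2}$ that blows up --- the paper's one-line reduction of $H_4$ to the model $g$ glosses over exactly this point. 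On the other hand, your proposed alternative is only an outline: transporting the bounds $f_{S^2}(\tau)-\tau=O(|\tau|^{1+2\kappa})$ and their derivative analogues through the singular change $\varphi_2$ is precisely the step you label ``the main obstacle'' and do not carry out, so as written neither argument is complete at $\partial D^2$.
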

\begin{proof}
Using the explicit local expressions for $f_{S^2}$ and $f_{D^2}$ and following arguments in Proposition \ref*{f is1plusbeta}, we find that the maps $f_{S^2}$ and $f_{D^2}$ are Hamiltonian with respect to the area and the  Hamiltonian functions are given as
$$
H_3(\tau_1,\tau_2)=\frac{\tau_2 h(\sqrt{\tau_1^2+\tau_2^2})}{\sqrt{\tau_1^2+\tau_2^2}}\log\lambda
$$
and
$$
H_4(x_1,x_2)=\frac{x_2 h(\sqrt{1-x_1^2-x_2^2})}{\sqrt{x_1^2+x_2^2}}\log\lambda
$$	
respectively. Here, as before, $h(u)=u^{\frac{2}{1-\a}}$.
	
To show that the maps $f_{S^2}$ and $f_{D^2}$ are of the desired class of smoothness, we will show that the Hamiltonian functions $H_3$ and $H_4$ have H\"older continuous second order partial derivatives with H\"older exponent $2\kappa$. Since $H_3$ and $H_4$ are of the same regularity we consider only one of them and we set
$g(x,y)=y(x^2+y^2)^{\delta}$ where $\delta=\frac{1}{1-\a}-\frac 12$. 
	
Obviously, $\frac{\partial g}{\partial x}(0,0)=0$ and
$$
\frac{\partial g}{\partial x}(x,y)=
\begin{cases}
2\delta xy(x^2+y^2)^{\delta-1},& (x,y)\neq(0,0);\\
0, & (x,y)=(0,0).
\end{cases}.
$$
The function $\frac{\partial g}{\partial x}$ is symmetric, so we will only study H\"older continuity of $\frac{\partial^2 g}{\partial x^2}$ as H\"older continuity of 
$\frac{\partial^2 g} {\partial x\partial y}$ is immediate. Note that
$$
\frac{\partial^2 g} {\partial x^2}(x,y)=
\begin{cases}
2\delta\frac{y(x^2+y^2)^{1-\delta}-2(1-\delta)x^2y(x^2+y^2)^{-\delta}}{(x^2+y^2)^{2-2\delta}}&  (x,y)\neq (0,0);\\
0 & (x,y)=(0,0). 
\end{cases}
$$ 
Since the function $\frac{\partial^2 g} {\partial x^2}(x,y)$ is differentiable for all 
$(x,y)\ne (0,0)$, it is H\"older continuous for all pairs of nonzero points $(x,y)$. It remains to show H\"older continuity for pairs of points one of which is zero. We can write  
$$
\frac{\partial^2 g}{\partial x^2}=K_1y(x^2+y^2)^{\delta-1}-K_2x^2y(x^2+y^2)^{\delta-2},
$$ 
where $K_1>0$ and $K_2>0$ are some constants. Choose $(x,y)\neq (0,0)$ and note that 
$$
|y(x^2+y^2)^{\delta-1}|\le (x^2+y^2)^{\frac 12}(x^2+y^2)^{\delta-1}=(x^2+y^2)^{\delta-\frac 12}=d((x,y),(0,0))^{2\delta-1}. 
$$
Similarly,
$$
|x^2y(x^2+y^2)^{\delta-2}|\le d((x,y),(0,0))^{2\delta-1}.
$$
Hence, $\frac{\partial^2 g} {\partial x^2}$ is H\"older continuous with H\"older exponent 
$$
2\delta-1=2\Bigl(\frac{1}{1-\a}-\frac 12\Bigr)-1=2\kappa.
$$
Further, each of the functions $\frac{\partial^2 g}{\partial y^2}$ and 
$\frac{\partial^2 g}{\partial y\partial x}$ can be written as a linear combinations of functions 
\begin{equation}\label{funts}
x(x^2+y^2)^{\delta-1}, \quad y(x^2+y^2)^{\delta-1}, \quad x^2y(x^2+y^2)^{\delta-2}, \quad y^2x(x^2+y^2)^{\delta-2}
\end{equation} 
and are $0$ at the point $(x,y)=(0,0)$. Arguing as above one can show that each function in \eqref{funts} is H\"older continuous with H\"older exponent $2\kappa$. This completes the proof of the proposition.
\end{proof}	

Consider a smooth compact connected oriented surface $M$. It can be cut along closed geodesics in such a way that the resulting surface with boundary is homeomorphic to a regular polygon via a homeomorphism which we denote by $T$; this is a well-known topological construction. 

Let now $f$ be a $C^\infty$ diffeomorphism of the disk, which is identity on the boundary 
$\partial D^2$ and is \emph{infinitely flat}, i.e., given a sequence $\rho_n\to 0$ and a sequence of open domains $V_n\subset D^2$ satisfying 
\begin{equation}\label{seq1}
V_n\subset\overline{V}_n\subset V_{n+1} \text{ and } \bigcup_{n\ge 1}V_n=D^2,
\end{equation}
we have that for every $n\ge 1$,
$$
V_{n-1}\subset f(V_n)\subset V_{n+1}
\ \ \text{and} \ \  
\|f - \mbox{Id}\|_{C^n(V_{n+1}\setminus V_{n-1})}\le\rho_n.
$$
For such an $f$ it is shown in \cite{katan} (see also \cite{BP13}) that there is a homeomorphism $\varphi:\overline{D^2}\to M$ such that 
\begin{enumerate}
\item $\varphi$ is of class $C^\infty$ in the interior of the disk; 
\item $\varphi$ is area preserving, i.e., $h_*m_{D^2}=m_M$; 
\item the map $\varphi\circ f\circ \varphi^{-1}$ is a $C^\infty$ area preserving diffeomorphism of the surface.
\end{enumerate} 
In our case however, the map $f=f_{D^2}$ is only of class $C^{2+2\kappa}$ and hence, is only \emph{finitely flat} at the boundary, i.e., there is a sequence of open domains $V_n\subset D^2$, satisfying \eqref{seq1}, such that for every $n\ge 1$ we have $V_{n-1}\subset f_{D^2}(V_n)\subset V_{n+1}$ and for every $0<\beta<2+2\kappa$,
\begin{equation}\label{sequ2}
\|f_{D^2}-\text{Id}\|_{C^{1+\beta}(V_{n+1}\setminus V_{n-1})}\le (r_{n-1})^{2+2\kappa-\beta},
\end{equation}
where $r_n=\text{dist}(V_n,\partial D^2)$. This requires us to develop a specific construction of the homeomorphism $\varphi$ which guarantees that the map $f_M$ is an area preserving diffeomorphism of class $C^{1+\beta}$ for some $\beta>0$. More precisely, the following statement holds.

\begin{thm}\label{mapphi3}
Given a smooth compact connected oriented surface $M$ and numbers 
$\frac19<\a<\frac14$ and $0<\mu<\frac12$, there exist $\beta=\beta(\a,\mu)>0$ and a continuous map $\varphi_3\colon\overline{D^2}\to M$ such that
\begin{enumerate}
\item the restriction $\varphi_3|\text{int }D^2$ is a diffeomorphic embedding;
\item $\varphi_3(\overline{D^2}) = M$;
\item $\varphi_3$ preserves area; more precisely, $(\varphi_3)_*m_{D^2}=m_M$ where $m_M$ is the area in $M$; moreover, $m_M(M\setminus \varphi_3(\text{int }D^2))=0$;
\item the map $f_M:=\varphi_3\circ f_{D^2}\circ\varphi_3^{-1}$ is a $C^{1+\beta}$ area preserving diffeomorphism of the surface. 
\end{enumerate}
\end{thm}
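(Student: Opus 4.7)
The plan is to write $\varphi_3 = T \circ \Psi$, where $T:\overline{P}\to M$ is the standard topological unfolding of $M$ onto a regular polygon $P$ (obtained by cutting $M$ along closed geodesics and realizing the resulting surface-with-boundary as a polygon with identified sides), and where $\Psi:\overline{D^2}\to\overline{P}$ is an area-preserving homeomorphism, smooth on the interior, that I will construct by a controlled exhaustion procedure. Properties (2) and (3) will then follow from the surjectivity of $T$ and the fact that $T$ collapses $\partial P$ to a set of zero area in $M$; property (1) follows once $\Psi$ is shown to be a diffeomorphism on $\text{int}\,D^2$. All the work is in arranging (4).

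The construction of $\Psi$ proceeds by induction. Using the sequence $\{V_n\}$ and the numbers $r_n$ from \eqref{sequ2}, I would pick a matching exhaustion $\{U_n\}$ of $\text{int}\,P$ with $m_P(U_n)=m_{D^2}(V_n)$, and inductively build area-preserving diffeomorphisms $\Psi_n:\overline{V_n}\to\overline{U_n}$ with $\Psi_{n+1}|_{V_{n-1}}=\Psi_n|_{V_{n-1}}$. At the $n$-th stage one applies a Dacorogna--Moser type extension on the annular shell $V_{n+1}\setminus\overline{V_{n-1}}$; the geometry of the corresponding shell $U_{n+1}\setminus\overline{U_{n-1}}$ near $\partial P$ must be chosen to control the growth of the derivatives of $\Psi_n$ and its inverse. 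Concretely, I would aim for quantitative bounds of the form
\[
\|D\Psi_n\|_{C^1(V_{n+1}\setminus V_{n-1})}+\|D\Psi_n^{-1}\|_{C^1(U_{n+1}\setminus U_{n-1})}\le C\, r_n^{-\eta}
\]
for a prescribed exponent $\eta=\eta(\alpha,\mu)>0$. The limit $\Psi:=\lim_n \Psi_n$ is then a smooth area-preserving bijection of the interiors, and a standard argument shows it extends continuously to a homeomorphism of the closures sending $\partial D^2$ onto $\partial P$.

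The crux is (4). Away from the image of $\partial D^2$ the map $f_M=T\circ\Psi\circ f_{D^2}\circ\Psi^{-1}\circ T^{-1}$ is a composition of smooth diffeomorphisms, so the real issue is uniform $C^{1+\beta}$-control on shells approaching the identified boundary in $M$. Writing $g:=f_M-\mathrm{Id}$ and using the chain rule together with the finite-flatness bound \eqref{sequ2} and the derivative bounds on $\Psi,\Psi^{-1}$, one obtains an estimate of the form
\[
\|g\|_{C^{1+\beta}(\Psi_n(V_{n+1}\setminus V_{n-1}))}\le C\, r_{n-1}^{2+2\kappa-\beta}\, r_n^{-\eta(2+\beta)}.
\]
For this to vanish as $n\to\infty$ it suffices that $\eta(2+\beta)+\beta<2+2\kappa$. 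Since $\kappa=\alpha/(1-\alpha)>\tfrac18$ for $\alpha>\tfrac19$, we have $2+2\kappa>\tfrac94$, which leaves room to choose $\eta>0$ small and a corresponding $\beta=\beta(\alpha,\mu)>0$ satisfying the inequality.

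The main obstacle is the quantitative geometric step: designing the shells $U_{n+1}\setminus\overline{U_n}$ so as to realize the required small value of $\eta$ while still preserving area and matching the prescribed total areas $m_P(U_n)=m_{D^2}(V_n)$. In Katok's original construction \cite{katan}, $f_{D^2}$ is $C^\infty$-flat at $\partial D^2$, so any polynomial blow-up of the derivatives of $\Psi$ is harmless; here the finite order of flatness forces a careful, quantitative design. Once this is achieved, the $C^{1+\beta}$-extension of $f_M$ across the identified boundary of $P$ in $M$ follows by a standard extension-across-a-measure-zero-set argument, and the identity-on-boundary behavior of $f_{D^2}$ ensures that $f_M$ matches the identity to order $1+\beta$ along the singular set.
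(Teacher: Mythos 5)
Your overall architecture --- unfold $M$ to a polygon $P$, build an exhaustion by shells, take a limit map, and balance a derivative-blowup estimate on the shells against the finite flatness \eqref{sequ2} of $f_{D^2}$ --- matches the paper's plan, and your identification of the finite-flatness issue as the crux is correct. However, there are two genuine gaps.

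First, you yourself flag ``the quantitative geometric step: designing the shells\ldots to realize the required small value of $\eta$'' as the main obstacle and then leave it undone. That step \emph{is} the proof: Steps 1--3 of the paper construct the curves $\gamma^{(j)}_{ti}$, the domains $U_n$, and the explicit scaling maps $h_n$, and Step 4 carries out the compounding norm estimates. Without an explicit exhaustion and a proof of the claimed $C^{1+\beta}$ bounds on the cumulative maps $\Psi_n$ and $\Psi_n^{-1}$, the argument does not close.

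Second, and more substantively, your target inequality $\eta(2+\beta)+\beta<2+2\kappa$ (with a single exponent $\eta$ controlling both $\|D\Psi_n\|$ and $\|D\Psi_n^{-1}\|$) is not the right threshold and almost certainly cannot be met. In the paper's construction the two directions behave very differently: $\|h\|_{C^{1+\beta}(V_{n+2}\setminus V_{n-1})}\lesssim r(n)^{-2\beta}$ (bounded $C^1$, small H\"older blow-up), while the \emph{cumulative} inverse $\|h^{-1}\|_{C^{1+\beta}(U_{n+1}\setminus U_n)}\lesssim r(n)^{-2(1+\beta)(1-2^{-(n+2)})}$ has exponent close to $2$ because the derivative bounds $\frac{r(i)}{r(i+1)}$ compound over all previous generations. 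With a symmetric bound, $\eta$ would have to be at least $\approx 2$, whereas your inequality forces $\eta<1+\kappa<2$ (recall $\kappa=\tfrac{\alpha}{1-\alpha}<1$). The paper's estimate succeeds precisely because of the asymmetry: the combined exponent is $-2\beta+(2+2\kappa-\beta)-2(1+\beta)=2\kappa-5\beta$, which is positive for $\beta<\tfrac{2\kappa}{5}$, even though $\|h^{-1}\|$ individually blows up like $r(n)^{-2}$. Your symmetric $\eta$-estimate loses this structure and gives a condition that the actual construction does not satisfy.

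A smaller strategic remark: you build $\Psi$ area-preserving at every stage via Dacorogna--Moser extensions on shells. The paper deliberately does not do this. It first constructs $h$ with no measure constraint (which lets the norm estimates in Step 4 be carried out explicitly) and only afterwards corrects the measure by a single Moser-type $C^\infty$ diffeomorphism $\psi$ of $D^2$, yielding $\varphi_3=T\circ h\circ\psi$. Trying to preserve area at each stage forces you to additionally estimate the $C^{1+\beta}$ norms of the Dacorogna--Moser correctors on pinched shells approaching $P^*$, which is a separate and nontrivial task that your proposal does not address.
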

One can represent a compact smooth oriented surface $M$ as a regular $p$-polygon $P$ (the number $p$ is even) whose angles are $\alpha=\frac{\pi(p-2)}{p}$. Let 
$A_1, A_2,\dots, A_p$ be vertices of the polygon and $O$ its center. For each 
$i=2,\dots, p$ denote by $B_i, \tilde{B}_i$ the points on the segment $A_iO$ for which 
$\frac{|A_iB_i|}{|A_iO|}=\frac 13$ and $\frac{|\tilde{B}_iO|}{|A_iO|}=\frac 13$. In what follows we assume that $A_{p+1}=A_1$, $B_{p+1}=B_1$, and $\tilde{B}_{p+1}=A_1$. 
Denote by
\begin{equation}\label{eq-p}
P^*:=A_1O\ \cup\ \Big(\bigcup_{i=1}^p A_iA_{i+1}\Big)\ \cup \Big(\bigcup_{i=2}^p (A_iB_i\cup\tilde{B}_iO)\Big).
\end{equation}
Note that the complement to $P^*$ is an open simply connected set. 

We now construct a homeomorphism from the unit disk $D^2$ onto $P$.
\begin{prop}
There exist 
\begin{enumerate}
\item a nested sequence of open simply connected sets 
$U_0\subset U_1\subset\cdots\subset U_n\subset\cdots$ satisfying 
$\bigcup_n U_n=P\setminus P^*$; 
\item a sequence of $C^{\infty}$ diffeomorphisms $h_n:U_n\to U_{n+1}$ for $n\ge 0$;
\item a number $\beta>0$
\end{enumerate} 
such that setting $h(x)=\lim\limits_{n\to\infty}h_{n-1}\circ\cdots \circ h_1\circ h_0$, we have 
that the map $f_P:P\to P$ given by
$$
f_P=
\begin{cases}
(h\circ f_{D^2}\circ h^{-1})(x),\ \ x\in P\setminus P^*,\\
\text{Id}\ \ \ \ \ \ \ \ \ \ \ \ \ \ \ \ \ \ \ \ \ \ \  \  \text{otherwise}
\end{cases}
$$	
is a $C^{1+\beta}$ diffeomorphism.
\end{prop}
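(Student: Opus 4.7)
The plan is to implement the Anosov--Katok unfolding construction of \cite{katan,BP13}, adapted to the finitely flat setting of \eqref{sequ2}. Fix a nested exhaustion $\{V_n\}$ of $\text{int}\,D^2$ satisfying \eqref{sequ2}, with $r_n = \text{dist}(V_n,\partial D^2) \sim 2^{-n}$, and take $U_0 \subset P\setminus P^*$ to be a small open disk around $O$. Inductively, obtain $U_n$ from $U_{n-1}$ by attaching a thin simply connected collar of width $\rho_n$ along the portions of $P^*$ not yet covered, namely the segments $A_1O$, $A_iB_i$, $\tilde B_iO$ ($i=2,\dots,p$), and the polygon edges $A_iA_{i+1}$. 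The widths $\rho_n$ will be chosen later as a suitable power of $r_n$, in particular so that $\bigcup_n U_n = P\setminus P^*$.

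Next, I would construct each $h_n$ as a $C^\infty$ area-preserving diffeomorphism of $U_n$ onto $U_{n+1}$ which is the identity on $U_{n-1}$; locally near each new arc of $P^*$, $h_n$ is taken to be the time-one map of an explicit Hamiltonian flow supported in the new collar, glued via cutoffs near the vertices $A_i$. Because each $h_n$ fixes $U_{n-1}$, the partial compositions $h_{n-1}\circ\cdots\circ h_0$ stabilize on every compact subset and converge to an area-preserving $C^\infty$ diffeomorphism $h$ from $\text{int}\,D^2$ (identified with $U_0$ in the initial chart) onto $P\setminus P^*$, extending continuously to a surjection $\overline{D^2}\to P$ that collapses $\partial D^2$ onto $P^*$. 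Consequently $f_P = h\circ f_{D^2}\circ h^{-1}$ is $C^\infty$ on $P\setminus P^*$ and equals the identity on $P^*$, hence is at least continuous on $P$.

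The main obstacle is verifying $C^{1+\beta}$ regularity of $f_P$ across $P^*$. On each collar $U_{n+1}\setminus U_n$, the chain rule yields
$$\|f_P - \mathrm{Id}\|_{C^{1+\beta}(U_{n+1}\setminus U_n)} \le C\,\|Dh_n\|_{C^0}^{2+\beta}\,\|f_{D^2} - \mathrm{Id}\|_{C^{1+\beta}(V_{n+1}\setminus V_{n-1})}.$$
By \eqref{sequ2}, the second factor is bounded by $r_{n-1}^{2+2\kappa-\beta}$, while the collar geometry gives $\|Dh_n\|_{C^0} \lesssim \rho_n^{-1}$. Taking $\rho_n = r_n^\theta$ for a suitable $\theta = \theta(\alpha,\mu) > 0$ reduces the global $C^{1+\beta}$ estimate to a linear inequality among $\beta$, $\kappa$, and $\theta$; since $\kappa = \alpha/(1-\alpha)$ with $\frac 19<\alpha<\frac 14$ and $0<\mu<\frac 12$ leaves enough room, one can solve for some $\beta = \beta(\alpha,\mu) > 0$ making the collar bounds summable. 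The technically delicate point is designing the local models of $h_n$ near the vertices $A_i$ so that the distortion bound $\|Dh_n\|_{C^0} \lesssim \rho_n^{-1}$ holds uniformly while preserving area and matching smoothly with the previous stage; once this is in place, summing the collar estimates gives H\"older control of $Df_P$ up to $P^*$, completing the verification that $f_P$ is a $C^{1+\beta}$ diffeomorphism.
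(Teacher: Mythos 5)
The main idea --- peeling collars off the boundary, interpolating step by step, and estimating $\|f_P - \mathrm{Id}\|_{C^{1+\beta}}$ on the annuli $U_{n+1}\setminus U_n$ against the finite-flatness bound \eqref{sequ2} --- is the right framework and matches the paper's. However, two points in your proposal are genuinely off, and the second one is fatal.

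First, your chain-rule estimate
\[
\|f_P - \mathrm{Id}\|_{C^{1+\beta}(U_{n+1}\setminus U_n)} \le C\,\|Dh_n\|_{C^0}^{2+\beta}\,\|f_{D^2} - \mathrm{Id}\|_{C^{1+\beta}(V_{n+1}\setminus V_{n-1})}
\]
conflates the single step $h_n$ with the full map $h=\lim h_{n-1}\circ\cdots\circ h_0$. The conjugacy is by $h$, not by $h_n$; on $V_{n+1}\setminus V_n$ the derivative of $h^{-1}$ is a product $Dh_0^{-1}\cdots Dh_n^{-1}$ of all the earlier stages, and the paper has to bound $\|h^{-1}\|_{C^{1+\beta}(U_{n+1}\setminus U_n)}$ by the \emph{product} $\prod_{i=0}^{n}r(i)^{-1-\beta}$ (see \eqref{norm2}). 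Moreover, the quantity that blows up is not $\|Dh_n\|_{C^0}$ --- the paper explicitly observes via \eqref{h-diff} that the $C^1$ norms of $h$ and $h^{-1}$ are bounded --- but the $C^\beta$ H\"older seminorms of $Dh$ and $Dh^{-1}$. Controlling the $C^0$ norm of $Dh_n$ is neither the obstruction nor sufficient for the conclusion.

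Second, your choice of exhaustion $r_n \sim 2^{-n}$ cannot work with this accumulation of derivatives. The paper takes $r(n+1) = r(n)^2$ precisely so that the telescoping exponents $\sum_{i\le n}\bigl(1+\beta\bigr)\cdot 2^{-(n-i)}$ form a convergent geometric series, giving $\prod_{i=0}^{n}r(i)^{-1-\beta} = r(n)^{-2(1+\beta)(1-2^{-(n+2)})}$ --- a fixed power of $r(n)$, which is then beaten by the flatness exponent $r(n)^{2+2\kappa-\beta}$ from \eqref{sequ2}. With $r_n=2^{-n}$ the corresponding product is $2^{(1+\beta)n(n+1)/2}$, which is super-polynomial in $r_n^{-1}=2^n$ and overwhelms the gain $r_{n-1}^{2+2\kappa-\beta}\sim 2^{-(n-1)(2+2\kappa-\beta)}$. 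So the collar estimates you propose are not summable, and there is no admissible $\beta>0$.

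A remark on the area-preserving $h_n$: the paper deliberately builds the $h_n$ as non-area-preserving interpolations and only restores area at the end (proof of Theorem \ref{mapphi3}) by correcting with a Moser-type diffeomorphism $\psi$ (Lemma \ref{LVolume}). Insisting on Hamiltonian $h_n$ at every stage is not wrong per se, but it adds constraints and does not touch the regularity obstruction; it is not a substitute for the $r(n+1)=r(n)^2$ scaling and the product estimate on $\|h^{-1}\|_{C^{1+\beta}}$ that make the argument close.
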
	

\emph{Proof of the proposition.} We split the proof into three steps. 

\vskip0.05in
{\bf Step 1.} We first construct a sequence of open sets $U_n$.

Fix $n>0$, $t\in[n,n+1]$ and let $r(t)>0$ be a strictly monotonically decreasing continuous function on $[1,\infty)$ which will be determined later in Step 4. For $i=1,\dots, p$ consider the following collection of points associated with the point $A_i$:
\begin{itemize}
\item $K_{ti}$, the point that is determined uniquely by the requirements that 
the angle $\angle (K_{ti}A_iA_{i+1})=\frac18\alpha$ and $\text{dist}(K_{ti},A_iA_{i+1})=r(t)$;
\item $L_{ti}$, the point that is determined uniquely by the requirements that 
the angle $\angle (OA_iL_{ti})=\frac18\alpha$ and $\text{dist}(L_{ti},A_iO)=r(t)$;
\item $M_{ti}$, the image of $L_{ti}$ under the reflection about the line $OA_i$;
\item $N_{ti}$, the image of $K_{ti}$ under the reflection about the line $OA_i$;
\item $E_{ti}$, the point on the line through $B_i$ which is perpendicular to the line $A_iB_i$ and such that $\text{dist}(E_{ti},B_i)=r(t)$;
\item $F_{ti}$, the image of $E_{ti}$ under the reflection about the line $A_iB_i$.
\item $\tilde{L}_{ti}$, the point that is determined uniquely by the requirements that 
the angle $\angle (A_iO\tilde{L}_{ti})=\frac14(\pi-\alpha)$ and 
$\text{dist}(\tilde{L}_{ti},O\tilde{B}_i)=r(t)$;
\item $\tilde{M}_{ti}$, the image of the point $\tilde{L}_{ti}$ under the reflection about the line $A_iO$;
\item $\tilde{E}_{ti}$, the point on the line through $\tilde{B}_i$ which is perpendicular to the line $O\tilde{B}_i$ and such that $\text{dist}(\tilde{E}_{ti},\tilde{B}_i)=r(t)$;
\item $\tilde{F}_{ti}$, the image of the point $\tilde{E}_{ti}$ under the reflection about the line $O\tilde{B}_i$.
\end{itemize}
We introduce the following curves: for $i=1,\dots, p$ let
\begin{itemize}
\item $\gamma_{ti}^{(1)}$ be a line segment connecting the points $K_{ti}$ and $N_{t(i+1)}$ where we assume that $N_{t(p+1)}=N_{t1}$; 
\item $\gamma_{ti}^{(2)}$ be a curve connecting the points $K_{ti}$ and $L_{ti}$ to be determined later in Step 2;
\item $\gamma_{ti}^{(3)}$ be a curve connecting the points $M_{ti}$ and $N_{ti}$ to be determined later in Step 2;
\item $\tilde{\gamma}_{ti}^{(2)}$ be a curve connecting the points $\tilde{L}_{ti}$ and 
$\tilde{M}_{t(i+1)}$ to be determined later  in Step 2 where we assume that 
$\tilde{M}_{t(p+1)}=\tilde{M}_{t1}$.
\end{itemize}
Now for $i=2,\dots, p$ let
\begin{itemize}
\item $\gamma_{ti}^{(4)}$ be a line segment connecting the points $L_{ti}$ and $E_{ti}$;
\item $\gamma_{ti}^{(5)}$ be a line segment connecting the points $M_{ti}$ and $F_{ti}$;
\item $\gamma_{ti}^{(6)}$ be a curve connecting the points $E_{ti}$ and $F_{ti}$ to be determined later in Step 2; 
\item $\tilde{\gamma}_{ti}^{(4)}$ be a line segment connecting the points $\tilde{L}_{ti}$ and $\tilde{E}_{ti}$;
\item $\tilde{\gamma}_{ti}^{(5)}$ be a line segment connecting the points $\tilde{M}_{ti}$ and $\tilde{F}_{ti}$;
\item $\tilde{\gamma}_{ti}^{(6)}$ be a curve connecting the points $\tilde{E}_{ti}$ and 
$\tilde{F}_{ti}$ to be determined later in Step 2.
\end{itemize}
Finally, we let $\sigma^{(1)}_t$ to be a line segment connecting the points $L_{t1}$ and 
$\tilde{L}_{t1}$ and $\sigma^{(2)}_t$ to be a line segment connecting the points $M_{t1}$ and $\tilde{M}_{tp}$.

Let $\tau_t$ be the curve given by
$$
\tau_t=\sigma^{(1)}_t\cup\sigma^{(2)}_t\cup\bigl(\bigcup_{i=1}^p\gamma_{ti}^{(1)}\bigr)
\cup\bigr(\bigcup_{i=2}^p\bigr(\bigcup_{j=2}^6(\gamma_{ti}^{(j)}\cup\tilde{\gamma}_{ti}^{(j)})\bigl)\bigl).
$$
By construction, $\tau_t$, $t\in[n,n+1]$, is a closed connected continuous curve which bounds an open simply connected domain in the polygon $P$. We denote this domain by $U_t$. In particular, $U_n$ is the desired open set. 
\vskip0.05in
{\bf Step 2.} We show how to choose the curves $\gamma_{ti}^{(2)}$ and 
$\gamma_{ti}^{(6)}$. The curves $\gamma_{ti}^{(3)}$, $\tilde{\gamma}_{ti}^{(2)}$ and 
$\tilde{\gamma}_{ti}^{(6)}$ can be chosen in a similar way. 

Let $\varphi,\psi:[-1,1]\to\mathbb{R}$ be two continuous functions satisfying:
\begin{enumerate}
\item $\varphi\in C^\infty$ on $[0,1]$ and $\psi\in C^\infty$ on $(0,1)$;
\item $\varphi(-x)=\varphi(x)$ and $\psi(-x)=\psi(x)$;
\item $\varphi(0)=a$ where $1-\tan\frac{\alpha}{4}<a<\cot\frac{\alpha}{4}$ and 
$\varphi(-1)=\varphi(1)=\cot\frac{\alpha}{8}$;
\item $\psi(0)=1$ and $\psi(-1)=\psi(1)=0$;
\item $0<\varphi'(x)\le\cot\frac{\alpha}{4}$ for $0<x\le 1$ and $-\cot\frac{\alpha}{4}\le\varphi'(x)<0$ for $-1\le x<0$;
\item $\varphi'(-1)=-\cot\frac{\alpha}{4}$ and $\varphi'(1)=\cot\frac{\alpha}{4}$;
\item $\psi$ is infinitely vertically flat at $-1$ and $1$.
\end{enumerate}

For $i=1,\dots,p$ consider the orthogonal coordinate system with origin at $A_i$ whose vertical axis is the bisector of the angle $\angle (B_iA_iA_{i+1})$ and for every $t\in[n,n+1]$ we let $\gamma_{ti}^{(2)}$ be the graph of the function 
$\varphi_t(x)=r(t)\varphi(\frac{x}{r(t)})$ where $-r(t)\le x\le r(t)$. It is easy to see that $\gamma_{ti}^{(2)}$ is a $C^\infty$ curve that connects the points $K_{ti}$ and $L_{ti}$ and is infinitely tangent to the lines $K_{ti}N_{ti}$ and $L_{ti}E_{ti}$.

Now consider the orthogonal coordinate system with origin at $B_i$ whose vertical axis is the line $A_iB_i$. We let $\gamma_{ti}^{(6)}$ be the graph of the function 
$\psi_t(x)=r(t)\psi(\frac{x}{r(t)})$ where $-r(t)\le x\le r(t)$. It is easy to see that $\gamma_{ti}^{(6)}$ is a $C^\infty$ curve that connects the points $E_{ti}$ and $F_{ti}$ and is infinitely tangent to the lines $L_{ti}E_{ti}$ and $M_{ti}F_{ti}$.

It is easy to see that with the above choice of curves $\gamma_{ti}^{j}$, $j=1,\dots,6$, the curve $\tau_t$ is of class $C^\infty$.

We show that the curves $\tau_t$ corresponding to different values of $t$ are disjoint. To this end fix $n\le t_1<t_2\le n+1$. It suffices to show that the curves $\gamma_{ti}^{(2)}$, 
$\gamma_{ti}^{(6)}$, $\gamma_{ti}^{(3)}$, $\tilde{\gamma}_{ti}^{(2)}$, and 
$\tilde{\gamma}_{ti}^{(6)}$ with $t=t_1$ and $t=t_2$ are disjoint. We will prove this for the curve $\gamma_{ti}^{(2)}$ only as the proof for other curves is similar. By Property (3), 
$$
\varphi_{t_1}(0)=ar(t_1), \quad \varphi_{t_1}(r(t_1))=r(t_1)\cot\frac{\alpha}{8}, \quad 
\varphi_{t_1}'(r(t_1))=\cot\frac{\alpha}{4}.
$$
Similarly, 
$$
\varphi_{t_2}(0)=ar(t_2), \quad \varphi_{t_2}(r(t_2))=r(t_2)\cot\frac{\alpha}{8}, \quad 
\varphi_{t_2}'(r(t_2))=\cot\frac{\alpha}{4}.
$$
In view of Property (5) the desired result would follow if we show that 
$$
\varphi_{t_1}(r(t_2))=r(t_1)\varphi\Bigr(\frac{r(t_2)}{r(t_1)}\Bigl)
\ge\varphi_{t_2}(r(t_2))=r(t_2)\cot\frac{\alpha}{8}.
$$
Setting $x=\frac{r(t_2)}{r(t_1)}$, the above inequality amounts to
$\varphi(x)\ge x\cot\frac{\alpha}{8}$ and immediately follows from Properties (3) and (4) of the function $\varphi$. 
\vskip0.05in
{\bf Step 3.} We now construct maps $h_n$. By the Riemann Mapping theorem, there is a $C^\infty$ diffeomorphism $h_0:D^2\to U_1$. For each $n=1, 2,3,\dots$ we will construct maps $h_n:U_n\to U_{n+1}$ such that $h_n\big|{U_{n-1}}=\text{Id}$. 

Given two numbers $n-1\le s\le n$ and $n-1\le t\le n+1$ such that $s<t$ we construct a $C^\infty$ diffeomorphism $\hat{h}_{st}:\tau_s\to\tau_t$ in the following way. 
\begin{itemize}
\item $\hat{h}_{st}:\sigma^{(j)}_s\to\sigma^{(j)}_t$ is a linear map, given by 
$\hat{h}_{st}(z)=\frac{r(t)}{r(s)}(z)$, $z\in\sigma^{(j)}_s$ and $j=1,2$;
\item $\hat{h}_{st}:\gamma_{si}^{(1)}\to\gamma_{ti}^{(1)}$ is a linear map, given by 
$\hat{h}_{st}(z)=\frac{r(t)}{r(s)}(z)$, $z\in\gamma_{si}^{(1)}$ and $i=1,\dots,p$;
\item $\hat{h}_{st}:\gamma_{si}^{(j)}\to\gamma_{ti}^{(j)}$ is a map, given by 
$\hat{h}_{st}(z)=v$, where $z=(y,\varphi_s(y))$ and $v=(\frac{r(t)}{r(s)}y, \varphi_t(y))$ for 
$-r(s)\le y\le r(s)$, $i=2,\dots,p$, $j=2,\dots,6$;
\item $\hat{h}_{st}:\tilde{\gamma}_{si}^{(j)}\to\tilde{\gamma}_{ti}^{(j)}$ is a map, given by 
$\hat{h}_{st}(z)=v$, where $z=(y,\psi_s(y))$ and $v=(\frac{r(t)}{r(s)}y, \psi_t(y))$ for 
$-r(s)\le y\le r(s)$, $i=2,\dots,p$, $j=2,\dots,6$;
\end{itemize}
Now given $n-1\le s\le n$, define the map $\hat{h}_s:=\hat{h}_{st}$ with $t=2(s-n+1)+n-1$. The desired map $h_n:U_n\to U_{n+1}$ is now given as follows: for $A\in U_n$ choose a unique $s$ such that $A\in\tau_s$ with $n-1\le s\le n$ and then set $h_n(A)=B$ where  $B=\hat{h}_s(A)\in\tau_{2(s-n+1)+n-1}\subset U_{n+1}$. It is easy to see that $h_n$ is a $C^\infty$ diffeomorphism. 

It follows that the map $h=\lim\limits_{n\to\infty}h_{n-1}\circ\cdots\circ h_1\circ h_0$ is a well defined $C^\infty$ diffeomorphism from $\text{int}D^2$ onto $P\setminus P^*$ where $P^*$ is given by \eqref{eq-p}. It also follows from the construction of the map $h$ that there is $C>0$ such that 
\begin{equation}\label{h-diff}
\|h\|_{C^1}\le C, \quad \|h^{-1}\|_{C^1}\le C.
\end{equation}
\vskip0.05in

{\bf Step 4.} It remains to show that the map $f_P=h\circ f_{D^2}\circ h^{-1}$ is a 
$C^{1+\beta}$ diffeomorphism for some $\beta>0$. 

Observe that $f_P(P\setminus P^*)=P\setminus P^*$,  $f_P(P^*)=P^*$, and 
$f_P|P^*=\text{Id}$. In particular, $f_P|P\setminus P^*$ is a $C^\infty$ diffeomorphism. It remains to show that $f_P$ is of class $C^{1+\beta}$ on $P^*$. 
To do so we will show the following:

\begin{equation}\label{final}
\|f_P-\text{Id}\|_{C^{1+\beta}(U_{n+1}\setminus U_n)}\to 0
\end{equation} 
as $n\to\infty$.

First, we will prove the following lemma.
\begin{lem}
Let $r(n)$ be a decreasing sequence such that $0<r(1)< 1$ and 
\begin{equation}\label{rn}
r(n+1)= r^2(n), \quad 0<r(1)< 1.
\end{equation}
Then the sequence of open sets $V_n=h^{-1}(U_n)$ satisfies \eqref{seq1} and
$$
V_{n-1} \subset f_{D^2}(V_n)\subset V_{n+ 1}.
$$
\end{lem}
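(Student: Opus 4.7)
The proof has two parts: verifying the nesting/exhaustion property \eqref{seq1}, and verifying the two-sided invariance $V_{n-1}\subset f_{D^2}(V_n)\subset V_{n+1}$. The first is a geometric consequence of Steps 1--3 of the preceding proposition together with \eqref{h-diff}; the second is the content of the lemma and exploits the finite-flatness estimate \eqref{sequ2} balanced against the specific recursion $r(n+1)=r(n)^2$.

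\textbf{Verifying \eqref{seq1}.} Since $r(n+1)=r(n)^2<r(n)$, the disjointness argument of Step 2 applied with parameters $t_1=n$ and $t_2=n+1$ shows that $\tau_n$ and $\tau_{n+1}$ are separated, whence $\overline{U_n}\subset U_{n+1}$ with a gap of positive width comparable to $r(n)-r(n+1)=r(n)(1-r(n))$. Because $r(n)=r(1)^{2^{n-1}}\to 0$, every point of $P\setminus P^*$ eventually lies in some $U_n$, so $\bigcup_n U_n=P\setminus P^*$. Applying the bi-Lipschitz diffeomorphism $h^{-1}$ (bi-Lipschitzness following from \eqref{h-diff}) transfers these to $\overline{V_n}\subset V_{n+1}$ and $\bigcup_n V_n=\text{int}\,D^2$.

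\textbf{Verifying the invariance.} I would compare the $C^0$ displacement of $f_{D^2}$ on the annular region $V_{n+1}\setminus V_{n-1}$ with the thickness of the complementary annulus. From the explicit construction of $U_n$ there are constants $C_0,C_1>0$, independent of $n$, with
\[
r_n=\text{dist}(V_n,\partial D^2)\le C_0\,r(n),\qquad \text{dist}(V_n,D^2\setminus V_{n+1})\ge C_1\,(r(n)-r(n+1)).
\]
Combining these with \eqref{sequ2} at $\beta=0$ and the identity $r(n-1)^2=r(n)$,
\[
\|f_{D^2}-\text{Id}\|_{C^0(V_{n+1}\setminus V_{n-1})}\le r_{n-1}^{2+2\kappa}\le C_0^{2+2\kappa}\,r(n-1)^{2+2\kappa}=C_0^{2+2\kappa}\,r(n)\,r(n-1)^{2\kappa}.
\]
Choosing $r(1)$ small enough makes the factor $C_0^{2+2\kappa}\,r(n-1)^{2\kappa}$ strictly less than $C_1(1-r(n))$ uniformly in $n\ge 2$, so the displacement is strictly less than $\text{dist}(V_n,D^2\setminus V_{n+1})$; hence $f_{D^2}(x)\in V_{n+1}$ for every $x\in V_n\setminus V_{n-1}$. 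The inclusion $f_{D^2}(V_n)\subset V_{n+1}$ for every $n$ then follows by induction: the base case $f_{D^2}(V_1)\subset V_2$ is verified by continuity of $f_{D^2}$ on the compact set $\overline{V_1}\subset\text{int}\,D^2$ after shrinking $r(1)$ if necessary so that $V_2$ contains the compact image $f_{D^2}(\overline{V_1})$. The reverse containment $V_{n-1}\subset f_{D^2}(V_n)$, equivalently $f_{D^2}^{-1}(V_{n-1})\subset V_n$, follows by the same argument applied to $f_{D^2}^{-1}$, which enjoys identical flatness at $\partial D^2$ since $f_{D^2}$ is an area-preserving $C^{2+2\kappa}$ diffeomorphism fixing $\partial D^2$ pointwise.

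\textbf{Main obstacle.} The crux is matching the finite-flatness exponent $2+2\kappa$ of $f_{D^2}$ against the rate of decay of $r(n)$: the annulus thickness is $\Theta(r(n))=\Theta(r(n-1)^2)$ while the displacement is $O(r(n-1)^{2+2\kappa})=O(r(n)\cdot r(n-1)^{2\kappa})$, so the excess flatness $2\kappa$ manifests as a small multiplicative margin $r(n-1)^{2\kappa}$ that absorbs all accumulated geometric constants via the initial choice of $r(1)$. Any decay slower than squaring would leave the displacement and the gap scaling identically and destroy this margin, which is why the hypothesis is $r(n+1)=r(n)^2$ rather than, say, a geometric recursion. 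A secondary technical point is the transfer of estimates between $P\setminus P^*$ and $\text{int}\,D^2$ across the diffeomorphism $h$, which is precisely why \eqref{h-diff} was arranged in Step 3.
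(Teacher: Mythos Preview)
Your argument is correct and follows essentially the paper's strategy: both bound the displacement of $f_{D^2}$ near $\partial D^2$ by a power of the distance to the boundary exceeding $2$, then use $r(n+1)=r(n)^2$ to make this displacement negligible compared to the separation between successive $V_n$; the paper phrases the comparison via the distance function to $\partial D^2$ (and its explicit inequalities in fact yield indices $n\pm2$ rather than $n\pm1$), whereas you compare directly to the annulus gap and argue by induction, but this is cosmetic. One formal caveat: citing \eqref{sequ2} for the specific sequence $V_n=h^{-1}(U_n)$ reads as circular, since \eqref{sequ2} as written asserts the existence of \emph{some} sequence already satisfying the inclusion you are trying to prove; what you actually need---and what the paper invokes directly---is the intrinsic pointwise estimate $\text{dist}(x,f_{D^2}(x))=O\bigl(\text{dist}(x,\partial D^2)^{2+a}\bigr)$ for some $a>0$, which follows from the order of vanishing of the Hamiltonian $H_4$ at $\partial D^2$.
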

\textit{Proof of the Lemma}. Note that there are $C_2\ge C_1>0$ such that
$$
C_1r(n)\le\text{dist}(U_n, P^*)\le C_2r(n).
$$
Furthermore, in view of \eqref{h-diff} there are $C_4\ge C_3>0$ such that 
$$
C_3r(n)\le\text{dist}(V_n, \partial D^2)\le C_4r(n).
$$ 
Since the map $f_{D^2}$ is identity on $\partial D^2$ and is of class of smoothness $2+2\kappa$, we obtain that for all sufficiently small $r_n$, any $x$ in the neighborhood $U_{r_n}(\partial D^2)$ and any $\beta>0$
$$
\text{dist}(x, f_{D^2}(x)) < r(n)^{2+2\kappa-\beta}.
$$
Therefore, for some $0<a<1$,
$$
C_3r(n)- r(n)^{2+a}\le\text{dist}(f_{D^2}(x), \partial D^2)\le C_4r(n) + r(n)^{2+a}.
$$
To prove the desired inclusion we will show that
$$
C_4r(n+2)<C_3r(n)- r(n)^{2+a}\le C_4r(n) + r(n)^{2+a}< C_3r(n-2).
$$
We prove the leftmost inequality. Since $r(n+2)=r^4(n)$, we have
$$
\begin{aligned}
C_4r(n+2)&<C_3r(n)- r(n)^{2+a} \Leftrightarrow \\
C_4\frac{r(n+2)}{r(n)}&<C_3- r(n)^{1+a} \Leftrightarrow \\
C_4r(n)^3 &< C_3-r(n)^{1+a}. 
\end{aligned}
$$
For large values of $n$ both $C_4r(n)^3$ and $r(n)^{1+a}$ are small. Hence, the last inequality holds.

We now prove the rightmost inequality (the inequality in the middle is obvious).
$$
\begin{aligned}
C_4r(n) + r(n)^{2+a}&< C_3r(n-2) \Leftrightarrow \\
C_4+ r(n)^{1+a} &< C_3\frac{r(n-2)}{r(n)} \Leftrightarrow \\
C_4 + r(n)^{1+a}&< C_3r(n)^{-\frac 34}.
\end{aligned}
$$
For large values of $n$ the left hand side of the last inequality is close to $C_4$ while the right hand side gets large. This completes the proof of the lemma. 

The above lemma allows us to write
\begin{equation}\label{fP norm}
\begin{aligned}
\|h\circ(f_{D^2}-&\text{Id})\circ h^{-1}\|_{C^{1+\beta}(U_{n+1}\setminus U_n)}\le\\
\|h\|_{C^{1+\beta}(V_{n+2}\setminus V_{n-1})}&\|f_{D^2}-\text{Id}\|_{C^{1+\beta}(V_{n+1}\setminus V_n)}\|h^{-1}\|_{C^{1+\beta}(U_{n+1}\setminus U_n)}.
\end{aligned}
\end{equation}
Observe that $\|f_{D^2}-\text{Id}\|_{C^{1+\beta}}$ admits Estimate \eqref{sequ2} and it remains to estimate the norms $\|h\|_{C^{1+\beta}(V_{n+2}\setminus V_{n-1})}$
and $\|h^{-1}\|_{C^{1+\beta}(U_{n+1}\setminus U_n)}$. 

We write $h|_{V_{n+2}}= h_{n+1}|_{U_{n+1}},$ so in order to estimate the norm of $h$ we will estimate the norm of $h_{n+1}$. In order to estimate the norm of $h^{-1}$ we will need to estimate the norms of $h_n^{-1}$ for each $n = 0,1,2,\cdots$.

Further, it suffices to estimate the norm of $h$ restricted to the boundary of the sets $U_n$. Recall that $\tau_n$, the boundary of $U_n$, is a union of the curves $\sigma^{(1)}_n$, 
$\sigma^{(2)}_n$, $\gamma_{ni}^{(j)}$, and $\tilde{\gamma}_{ni}^{(j)}$, $j=1,\dots,6$. Note that the map $h$ acts linearly on the curves $\sigma^{(1)}_n$, $\sigma^{(2)}_n$, and 
$\gamma_{ni}^{(1)}$ and hence, the norm of $h$ restricted to these parts of the curve 
$\tau_n$ are bounded. 

The curves $\gamma_{ni}^{(j)}$ and $\tilde{\gamma}_{ni}^{j}$, $j\ge 2$ are the graphs of the functions $\varphi_n$ and $\psi_n$ respectively. We shall only give an estimate of the norm of $h$ restricted to the curves $\gamma_{ni}^{(j)}$, since the estimates of the norm of $h$ restricted to the curves $\tilde{\gamma}_{ni}^{(j)}$ is similar. Also, we can assume that $i$ and $j>1$ are fixed.

Now for a fixed curve $\gamma_{ni}^{(j)}$ we define the orthogonal coordinate system centered at the vertex $A_i$ with the vertical axis $A_iO$ (recall that $O$ is the center of the polygon $P$). In this coordinate  system the map $h_n$ is given by
$$
h_n:(x,\varphi_n(x))\rightarrow \Bigl(\frac{r(n+1)}{r(n)}x, \varphi_{n+1}(x)\Bigr), \,\, 
-r(n)\le x \le r(n).
$$
Since $\varphi_n$ is symmetric, we can further assume that $x>0$. We have that  
$$
\|h_n\|_{C^{1+\beta}}=\max\Bigl(\|h_n\|_{C^0}, \|dh_n\|_{C^0}, \|dh_n\|_{C^\beta}\Bigr),
$$
where 
$$
\|dh_n\|_{C^\beta}=\max\Bigl(\sup\frac{\|\partial_xh_n(x)-\partial_yh_n(y)\|}{\|x-y\|^{\beta}}, \sup\frac{\|\partial_xh_n(x)-\partial_yh_n(y)\|}{\|x-y\|^{\beta}}\Bigr)
$$
and $\partial_xh_n$ and $\partial_yh_n$ are the partial derivatives of $h$ with respect to the first and the second variables respectively.

Let us write $h_n= (h^{(1)}_n, h^{(2)}_n)$, $y=\varphi_n(x)$ where 
$$
h^{(1)}_n(x, y)=\frac{r(n+1)}{r(n)}x \,\text{ and }\, h^{(2)}_n(x, y) = \varphi_{n+1}(x).
$$
Since 
$$
h^{(1)}_n(x,y)=\frac{r(n+1)}{r(n)}x\le r(n+1)< 1 \,\text{ and }\, h^{(2)}_n(x,y)=\varphi_{n+1}(x) < K_1,
$$ 
we obtain that $\sup\|h_n\|< K_1$.

Further, we have that $\partial_x h^{(1)}_n(x,y)=\frac{r(n+1)}{r(n)}<1$ and  
$$
\begin{aligned}
\partial_y h^{(2)}_n(x,y)&=\frac{\partial}{\partial y}\Bigl(\frac{r(n+1)}{r(n)}x\Bigr)
=\frac{\partial}{\partial y}\Bigl(\frac{r(n+1)}{r(n)}\varphi_n^{-1}(y)\Bigr)\\
&= r(n+1)(\varphi^{-1})'\Bigl(\frac{x}{r(n)}\Bigr)\frac 1{r(n)} < 1.
\end{aligned}
$$
Also, $\partial h^{(2)}_n(x,y)=\varphi'_{n+1}(x)<\cot\frac{\alpha}{4}$ and 
$$
\begin{aligned}
\partial h^{(2)}_n(x,y)&=\frac{\partial }{\partial y}\Bigl(r(n+1)\varphi\Bigl(\frac{r(n)\varphi^{-1}(\frac{y}{r(n)})}{r(n+1)}\Bigr)\Bigr)\\
&=r(n+1)\varphi'\frac{r(n)}{r(n+1)}\varphi^{-1}\Bigl(\frac{y}{r(n)}\Bigr)(\varphi^{-1})'\frac 1{r(n)} < K_2. 
\end{aligned}
$$
Thus, the partial derivatives of the functions $h_1$ and $h_2$ are bounded. However, a similar calculation shows that $\|dh_n\|_{C^\beta}$ tends to infinity as $r(n)^{-\beta}$.   Therefore, we conclude that 
\begin{equation}\label{norm1}
\|h\|_{C^{1+\beta}(V_{n+2}\setminus V_{n-1})}\le r(n+1)^{-\beta} = r(n)^{-2\beta}.
\end{equation}
Similar computations holds for $h_n^{-1}: U_{n+1}\rightarrow U_{n},$ with the only difference that the partial derivatives estimated by $\frac{r(n)}{r(n+1)}$ which is unbounded. Therefore, the \holder norm of the first derivatives of $h_n^{-1}$ are bounded by 
$$
\frac{r(n)}{r(n+1)}\frac{1}{r(n)^{-\beta}}= \frac{r(n)^{1-\beta}}{r(n+1)}.
$$
It follows that 
\begin{equation}\label{norm2}
\begin{aligned}
\|h^{-1}\|_{C^{1+\beta}(U_{n+1}\setminus U_n)}&=\|h_n^{-1}\circ h_{n-1}^{-1}\circ\cdots\circ h_0^{-1}\|_{C^{1+\beta}(U_{n+1}\setminus U_n)} \\
&\le\prod_{i=0}^{n}\frac{r(n)^{1-\beta}}{r(n+1)}=\prod_{i=0}^{n}r(n)^{-1-\beta}. 
\end{aligned}
\end{equation}
Since $r(n-1) = r(n)^{\frac 12}$, we obtain that 
\begin{equation}\label{norm3}
\prod_{i=0}^{n}r(n)^{-1-\beta} = r(n)^{-(1+\beta)(1+\frac 12 + \frac 14 + \cdots + \frac 1{2^{n+1}})} = r(n)^{-2(1+\beta)(1-\frac 1{2^{n+2}})}.
\end{equation}
Finally, using \eqref{fP norm}, we find that
$$
\begin{aligned}
\|h\circ(f_{D^2}-&\text{Id})\circ h^{-1}||_{C^{1+\beta}(U_n\setminus U_{n-1})}\\
&\le r(n)^{-2\beta}r(n)^{2+2\kappa-\beta}r(n)^{-2(1+\beta)(1-\frac 1{2^{n+2}})} \\ 
&= r(n)^{2\kappa - 5\beta + \frac 1{2^{n+1}} -\frac{\beta}{2^{n+1}}}.
\end{aligned}
$$
One can choose $\beta$ such that $2\kappa - 5\beta + \frac 1{2^{n+1}} -\frac{\beta}{2^{n+1}} >0$ and conclude that   
$$
\|h\circ(f_{D^2}-\text{Id})\circ h^{-1}\|_{C^{1+\beta}(U_n\setminus U_{n-1})}\to 0
$$
as $n\to\infty$. Therefore, $f_P$ is tangent to $\text{Id}$ near $\partial P$ and hence, the map $f_P$ is of class $C^{1+\beta}$.

\textit{Proof of Theorem \ref{mapphi3}.} By construction, the map $f_P$ generates via a homeomorphism $T$ a $C^{1+\beta}$ diffeomorphism $f_M$ of the surface $M$. We construct a $C^\infty$ diffeomorphism $\psi: D^2\to D^2$ such that 
$\varphi_3:=T\circ h\circ\psi$ is the desired area preserving diffeomorphism (that is 
$(\varphi_3)_*m_{D^2}=m_M$) which can be continuously extended to the closure of $D^2$.

Denote $\mu=(h^{-1}\circ T^{-1})_* m_M$.  Since both $m_{D^2}$ and $m_M$ are normalized Lebesgue measures, we have
$$
\int_{D^2} d\,m_{D^2} =1=\int_M d\,m_M=\int_{D^2}\,d\mu.
$$
To obtain the desired result it suffices to show that there is a $C^\infty$ diffeomorphism 
$\psi: D^2\to D^2$ that can be continuously extended to $\partial D^2$ such that 
$\psi_* \mu=m_{D^2}$. 

Set $\mu_1=m_{D^2}$ and for $n>1$ define a sequence of measures $\mu_n$ such that 
\begin{enumerate}
\item[(i)] $\mu_n\in C^\infty(D^2)$ that is the measure $\mu_n$ is absolutely continuous with respect to $m_{D^2}$ with density function of class $C^\infty$;
\item[(ii)] $\mu_n=\mu$ on $h^{-1}(U_{n-1})$;  
\item[(iii)] $\int_{h^{-1}(U_n)} d\mu_n=\int_{h^{-1}(U_n)} d\mu$.
\end{enumerate}
It is clear that for any $n\ge 1$, $\int_{D^2} d\mu_n=\int_{D^2} d\mu=1$.

We need the following version of Moser's theorem (see \cite{GS79}, Lemma~1).

\begin{lem}\label{LVolume}
Let $\omega$ and $\mu$ be two volume forms on an oriented manifold $M$ and let $K$ be a connected compact set such that the support of $\omega-\mu$ is contained in the interior of $K$ and $\int_K d\omega= \int_K d\mu$. Then there is a $C^\infty$ diffeomorphism 
$\hat{\psi}: M\to M$ such that $\hat{\psi}|{(M\setminus K)}=\text{Id}|{(M\setminus K)}$ and 
$\hat{\psi}_* \omega=\mu$.
\end{lem}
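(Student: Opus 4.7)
The plan is to prove this by the classical Moser trick. I will interpolate linearly between the two volume forms and build the desired diffeomorphism as the time-$1$ map of a well-chosen time-dependent vector field supported inside $K$.

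First I would set $\omega_t=(1-t)\omega+t\mu$ for $t\in[0,1]$. Since $M$ is oriented and both $\omega$ and $\mu$ are positive volume forms with respect to the same orientation (this will be the case in our application), every convex combination $\omega_t$ is again a nowhere-vanishing volume form. I then seek a time-dependent vector field $X_t$ whose flow $\hat\psi_t$ satisfies $\hat\psi_t^*\omega_t=\omega$. Differentiating this identity in $t$ and invoking Cartan's formula yields the equation
\begin{equation*}
\frac{d\omega_t}{dt}+\mathcal L_{X_t}\omega_t=0,\qquad\text{i.e.}\qquad(\mu-\omega)+d(\iota_{X_t}\omega_t)=0,
\end{equation*}
where I have used that $\omega_t$ is closed (top degree). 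So it suffices to find a $1$-form $\alpha$ supported in the interior of $K$ with $d\alpha=\omega-\mu$, and then define $X_t$ as the unique vector field solving $\iota_{X_t}\omega_t=\alpha$; this is possible pointwise and smoothly because $\omega_t$ is a volume form.

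The key obstacle, and the only real content beyond formal manipulation, is the construction of such a primitive $\alpha$. The form $\omega-\mu$ is a top-degree form supported in the interior of $K$, and its total integral vanishes by hypothesis. In our setting $M$ is a surface (or more generally we may reduce to a disk containing $K$), so one can invoke the standard de Rham lemma for compactly supported top forms: a compactly supported top form on a connected oriented manifold is exact with a compactly supported primitive if and only if its integral vanishes. Applied to a relatively compact open neighborhood $K\subset U\subset M$ chosen so that $\omega-\mu$ is still supported in a compact subset of $U$, this produces the required $\alpha$, which I can then extend by zero to all of $M$, keeping its support in the interior of $K$.

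With $\alpha$ in hand, $X_t$ vanishes identically outside the support of $\alpha$, hence outside the interior of $K$. Its flow $\hat\psi_t$ is therefore defined for all $t\in[0,1]$ (no escape to infinity, since the vector field has compact support) and equals the identity on $M\setminus K$. Setting $\hat\psi:=\hat\psi_1$, the Lie-derivative identity gives $\hat\psi^*\omega_1=\omega_0$, that is $\hat\psi^*\mu=\omega$, equivalently $\hat\psi_*\omega=\mu$, as required. Smoothness of $\hat\psi$ follows from smoothness of $\omega$, $\mu$, and standard ODE regularity. The hard part is really the cohomological step producing $\alpha$; once that is done, the rest is a formal consequence of Moser's homotopy argument.
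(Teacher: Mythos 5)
The paper itself does not prove this lemma: it cites it as Lemma~1 of \cite{GS79} (Greene and Shiohama). Your Moser-trick scaffolding --- linear interpolation $\omega_t$, solving $\iota_{X_t}\omega_t=\alpha$, integrating the time-dependent flow --- is the standard framework, and the formal manipulations are correct once a suitable primitive $\alpha$ is in hand. The gap is precisely in the step you flag as ``the only real content.'' You apply the compactly supported de~Rham theorem on a relatively compact open neighborhood $U$ with $K\subset U\subset M$. That produces an $\alpha$ with compact support in $U$, not in $\text{int}(K)$; extending by zero does not shrink the support, so the assertion that you ``keep its support in the interior of $K$'' is a non sequitur. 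With $\alpha$ supported only in $U\supsetneq K$, the vector field $X_t$ is supported in $U$, and the time-one flow is the identity only on $M\setminus U$, which does not deliver $\hat\psi|_{M\setminus K}=\text{Id}$.

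To close the gap you must apply the compactly supported de~Rham theorem on an open set sitting \emph{inside} $\text{int}(K)$ and containing $\text{supp}(\omega-\mu)$. This works if $\text{int}(K)$ is connected: take $V=\text{int}(K)$, note that $\int_V(\omega-\mu)=\int_K(\omega-\mu)=0$ by hypothesis, and obtain a primitive with compact support in $\text{int}(K)$; the rest of your argument then goes through verbatim. However, connectedness of $K$ does not imply connectedness of $\text{int}(K)$ (two closed disks tangent at one point), and in that situation the lemma as stated can fail: a diffeomorphism equal to the identity off $K$ must preserve each component of $\text{int}(K)$, hence preserve the $\omega$-mass of each component, so $\mu$ must match $\omega$ component by component, not merely in total. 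In the paper's application the sets $K_n=h^{-1}(\overline{U}_n\setminus U_n)$ are closed annuli with connected interior, so the corrected argument applies; as a freestanding proof, though, you need to apply the de~Rham lemma on the right open set and either add the connectedness hypothesis or explain why the intended setting supplies it.
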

Applying Lemma~\ref{LVolume} to each compact sets 
$K_n=h^{-1}(\overline{U}_n\setminus U_n)$ and volume forms 
$\mu_{n+1}|(\overline{U}_n\setminus U_n)$ and $\mu_n|(\overline{U}_n\setminus U_n)$, we obtain a $C^\infty$ diffeomorphism $\hat{\psi}_n: D^2\to D^2$ such that 
$(\hat{\psi}_n)_*\mu_{n+1}=\mu_{n}$ and $\hat{\psi}_n|{h^{-1}(U_{n-1})}=\text{Id}$. Then we let 
\[
\psi_n=\hat{\psi}_n\circ\dots\circ\hat{\psi}_1 \quad\text{and}\quad \psi=\lim_{n\to\infty}\psi_n.
\]
The construction gives 
$\hat{\psi}_n(h^{-1}(U_n\setminus U_n))=h^{-1}(U_n\setminus U_n)$. Recalling that $r(n)$ satisfies \eqref{rn} and using \eqref{norm1},  \eqref{norm2}, and \eqref{norm3}, we find that $\text{diam}\,\hat{\psi}^{-1}_n(U_n)\le Cd_n$ where $C>0$ is a constant and $d_n$ is a decreasing sequence of numbers such that 
$\sum_{n=1}^\infty d_n<\infty$. This implies that $d(x,\hat{\psi}_n (x))\le Cd_n$ for any $x\in D^2$. It follows that for any $x\in D^2$ and $n>j>0$,
\[
\begin{aligned}
d(\psi_j(x),\psi_n(x))&\le\sum_{i=j}^{n-1} d(\psi_i(x),\psi_{i+1}(x)) \\
&\le\sum_{i=j}^{n-1} d\big(\psi_i(x),\hat{\psi}_i(\psi_{i}(x))\big)\le C\sum_{i=j}^{n-1}d_i.
\end{aligned}
\]
This implies that the sequence $\psi_n$ is uniformly Cauchy and hence, $\psi$ is well defined and continuous on $D^2$. We can also get that $\psi: D^2\to D^2$ is a $C^\infty$ diffeomorphism.

By construction, we know that $(\psi_n)_*\mu_{n+1}=\mu_1=m_{D^2}$. Note that $D^2=\cup_{n\ge 1}h^{-1}(U_n)$. Hence, for any $x\in D^2$ there is $n>0$ and a neighborhood of $x$ on which $\mu_{n+i}=\mu_n$ for any $i>0$. It follows that 
$\psi_*\mu=(\psi_n)_*\mu_{n}=m_{D^2}$ on the neighborhood and hence, 
$\psi_*\mu=m_{D^2}$ on $D^2$.

\section{Completion of the proof of Theorem \ref{mainthm1}}

\subsection{Representing the map $f_M$ as a Young diffeomorphism} 
Consider a smooth compact connected oriented surface $M$ of genius $g\ge 0$ and the diffeomorphism $f_M:M\to M$ given by Statement 4 of Theorem \ref{mapphi3}. In this section we represent the map $f_M$ as a Young diffeomorphism.

\begin{prop}\label{$f_M$ is Young}
The map $f_M$ is a Young diffeomorphism. More precisely, there exists $Q>0$ and a collection of $s$-subsets that satisfy Conditions (Y1)-(Y6).
\end{prop}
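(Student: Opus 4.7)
The plan is to push the Young tower structure of $f_{\T^2}$ established in Proposition~\ref{f is Young} forward under the composite conjugacy
\[
\Phi := \varphi_3\circ\varphi_2\circ\varphi_1,
\]
and to verify Conditions (Y1)--(Y6) for the resulting data. Concretely, set $\Lambda_M := \Phi(\Lambda)$, $(\Lambda_M)_i^s := \Phi(\Lambda_i^s)$, $\tau_i^M := \tau_i$, and let the families of stable and unstable disks for $f_M$ be $\Gamma_M^{s,u} := \{\Phi(\gamma) \, : \, \gamma\in\Gamma^{s,u}\}$; since $f_M = \Phi\circ f_{\T^2}\circ\Phi^{-1}$ on the relevant set, the combinatorial invariance and Markov data translate verbatim.

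The crucial preliminary step is to check that $\Phi$ is a smooth diffeomorphism with uniformly bounded derivatives on an open neighborhood $V$ of the Markov element $P\subset\T^2$ carrying $\Lambda$. By Proposition~\ref{partition}, $P$ is disjoint from every slow-down disk $D^i_{r_0}$, so in particular it avoids the branch points $x_i$ of $\varphi_1$; fixing a local branch of $\varphi_1^{-1}$ over a neighborhood of $P$ gives a $C^\infty$ diffeomorphism onto a neighborhood of $\varphi_1(P)\subset S^2\setminus\{p_4\}$, on which $\varphi_2$ is $C^\infty$ by construction. The image $\varphi_2\circ\varphi_1(V)$ is compactly contained in $\mathrm{int}\,D^2$, and Theorem~\ref{mapphi3}(1)--(3) guarantees that $\varphi_3$ is a $C^\infty$ area-preserving embedding on the interior of $D^2$. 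Composing, $\Phi|_V$ is a $C^\infty$ area-preserving diffeomorphism with uniformly bounded $C^r$-norms for every $r$.

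With $\Phi|_V$ in hand, Conditions (Y1)--(Y6) for $f_M$ follow from the corresponding properties for $f_{\T^2}$ and standard conjugacy arguments. Properties (Y1)--(Y2) (hyperbolic product structure, invariance, Markov property, pairwise disjointness of the $u$-subsets) are immediate from the injectivity of $\Phi$ and the conjugacy. The stable/unstable contractions (Y3) survive with a new constant $a'<1$ obtained by absorbing the bi-Lipschitz constants of $\Phi|_V$, possibly at the cost of passing to a higher iterate of the inducing scheme. Bounded distortion (Y4) is preserved because $\Phi|_V$ is $C^\infty$ and its Jacobian along unstable leaves contributes only a uniformly bounded H\"older factor that does not alter the geometric rate $\kappa$. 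The measure condition (Y5) is preserved because $\Phi$ is area preserving and maps unstable disks diffeomorphically onto smooth disks of comparable length. Finally, the integrability condition (Y6) reduces to the corresponding sum on $\T^2$ up to a bounded Jacobian factor, and the constant $Q$ can be taken equal to that provided by Proposition~\ref{f is Young}.

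The main technical point to watch is that $\varphi_3$ is only globally $C^{1+\beta}$, so one might fear a loss in the distortion estimate (Y4) or in the smoothness of the stable/unstable disks in $\Gamma_M^{s,u}$. This is avoided because $\overline{\varphi_2\circ\varphi_1(V)}$ lies compactly in $\mathrm{int}\,D^2$, where $\varphi_3$ is actually $C^\infty$; hence all distortion and regularity estimates are purely local and inherit the smoothness of $\Phi|_V$, with no deterioration of the constants beyond the uniform bounds on $\Phi$ and $\Phi^{-1}$ over the compact set $\overline{V}$.
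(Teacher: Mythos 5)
Your proposal is correct and takes essentially the same route as the paper: both push forward the tower data through $\Phi=\varphi_3\circ\varphi_2\circ\varphi_1$ and exploit the fact that the base element $P$ (and hence a neighborhood of it) avoids the slow-down disks and the branch points $x_i$, so $\Phi$ is a $C^\infty$ area-preserving diffeomorphism there, which yields (Y1)--(Y2) from the conjugacy, (Y5)--(Y6) from area preservation, and (Y3)--(Y4) from local smoothness of $\Phi$. The only cosmetic difference is in (Y3): where you speak of ``absorbing the bi-Lipschitz constants, possibly at the cost of passing to a higher iterate,'' the paper instead cites the estimate in \cite{THK} showing $a$ can be made as small as desired by enlarging $Q$, which serves the same purpose more directly.
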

\begin{proof} First note that we already know that the map $f_{T^2}$ is a Young diffeomorphism, so we can assume that the genius $g\ge 1$. Consider the collection of 
$s$-subsets $H(\Lambda_i^s)$ and the return time 
$\tau:\Lambda\to\mathbb{N}$ for the map $f_{\T^2}$ defined in Section \ref{torustower}. Define $\Delta_i^s:=\varphi_3(\varphi_2(\varphi_1(H(\Lambda_i^s))))$ with the return time on $M$ (again denoted by $\tau$) given by 
$\tau(\varphi_3(\varphi_2(\varphi_1(x)))=\tau(x),\ x\in \T^2$. Let 
$\Delta=\bigcup_{i}\Delta_i^s$. We claim that $f_M$ is a Young diffeomorphism with respect to the collection of $s$-subsets $\Delta_i^s$. 

To prove this we need to check Conditions (Y1)--(Y6). Since the maps $\varphi_i$, $i=1,2,3$ are homeomorphisms and (Y1) and (Y2) are satisfied for the map $f_{\T^2},$ then these conditions are also satisfied for $f_M$. In addition, (Y5) and (Y6) hold true for $f_M$ since the maps  $\varphi_i$, $i=1,2,3$ preserve the area. 

To show (Y3) and (Y4) observe that the element of the Markov partition $P$ in the Young tower representation for the map $f_{\T^2}$ is away from the critical points $x_i,\ i=1,2,3,4$. This implies that the map $\varphi_3\circ\varphi_2\circ\varphi_1$ is a smooth diffeomorphism from $P$ onto its image. Since the return time function $F_M=f_M^{\tau}$ is defined on $\Delta\subset P$, (Y4) follows as it hold true for the map $f_{\T^2}$ by Proposition \ref{f is Young}. Note that (Y3) holds for the map $f_{\T^2}$ for some constant $0<a<1$. By the estimate (32) in \cite{THK}, this constant may be chosen as small as we wish by making $Q$ large enough. Thus, we obtain (Y3) for the map $f_M$.

Finally, arguing similarly it is easy to show that the diffeomorphism $f_{S^2}$ of the sphere $S^2$ is a Young diffeomorphism. This completes the proof of the proposition.
\end{proof}	

\subsection{Lower and upper polynomial bounds on the decay of correlations} 

To establish a lower bound on the decay of correlations we need the following result from \cite{SZ}.
\begin{prop}\label{SZ} 
Assume that $(M,m,f)$ is a Young diffeomorphism for which the greatest common denominator of numbers $\{\tau_i\}$, $\text{gcd}\{\tau_i\}=1$ and for which $m(\tau>n)=\mathcal{O}(\frac{1}{n^{\nu}})$ for some $\nu>0$. Assume also that  for some $C>0$ and all $x,y\in \Delta_i^s$,
$$
d(f^j(x), f^j(y))\le C\max\{d(x,y), d(f^{\tau_i}(x), f^{\tau_i}(y))\}.
$$
Then for any $\sigma>0$ and $h_1, h_2\in C^{\rho}(M)$: 
\begin{enumerate}
\item $\text{Cor}_n(h_1,h_2)=O(\frac{1}{n^{\nu-1}})$.
\item There exists a nested sequence of sets $M_1\subset M_2\cdots \subset M$ such that if $h_1, h_2$ are supported in $M_k$ for some $k>0$ then
\begin{equation}\label{Gouzel}
\text{Cor}_n(h_1,h_2)=\sum_{n>N}^{\infty}m(\{x\colon\tau(x)>N\})\int_{M}h_1\,dm\int_{M}h_2\,dm+r_{\nu}(n),
\end{equation}
where $r_{\nu}(n)=\mathcal{O}(R_{\nu}(n))$ and 
$$
R_{\nu}(n)=
\begin{cases}
\frac{1}{n^{\nu}}&  \text{if } \nu>2,\\ 
\frac{\log n}{n^2} &  \text{if } \nu=2, \\ 
\frac{1}{n^{2\nu-2}}& \text{if } 1<\nu<2.
\end{cases}  
$$ 
Moreover, if $\int_{M}h_2=0$, then 
$\text{Cor}_n(h_1,h_2)=\mathcal{O}(\frac1{n^{\nu}})$.
\end{enumerate}
\end{prop}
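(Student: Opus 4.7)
The plan is to transfer the correlation computation to the Young tower $(\hat Y,\hat f,\hat m)$ defined in Section~5.4 via the semiconjugacy $\pi(x,k)=f^k(x)$. For H\"older observables $h_1,h_2$ on $M$, the pull-backs $\hat h_i:=h_i\circ\pi$ satisfy $\text{Cor}_n(h_1,h_2)=\text{Cor}_n(\hat h_1,\hat h_2)$, so it is enough to analyze correlations on the tower. The first step is reduction to a one-sided expanding system: combining the uniform stable contraction (Y3(a)), the bounded distortion along stable leaves (Y4(a)), and the hypothesis $d(f^j x,f^j y)\le C\max\{d(x,y),d(f^{\tau_i}x,f^{\tau_i}y)\}$, one approximates each $\hat h_i$ exponentially fast (in the tower level) by functions constant on stable fibres $\gamma^s(x)$. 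Quotienting the tower along stable equivalence produces an expanding tower $(\bar Y,\bar f,\bar m)$ whose base map $F:\Lambda\to\Lambda$ is Gibbs--Markov with countable alphabet $\{\Lambda_i^s\}$ and return time $\tau_i$.

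The second step is operator-renewal analysis on $(\bar Y,\bar f,\bar m)$. On a suitable Banach space of locally Lipschitz densities one establishes Lasota--Yorke inequalities for the transfer operator $L$ of $F$, which yield a spectral gap; the hypothesis $\gcd\{\tau_i\}=1$ enforces aperiodicity. Writing $L=\sum_{k\ge 1}R_k$ where $R_k$ is the contribution of branches with $\tau_i=k$, the tail bound $m(\tau>n)=O(n^{-\nu})$ translates into operator-norm estimates $\|R_k\|=O(k^{-\nu-1})$. Applying Gouzel's operator renewal theorem (the central ingredient behind \cite{SZ}) produces the asymptotic
\[
\mathcal L^n\hat h_2=\Bigl(\int \hat h_2\,d\bar m\Bigr)\mathbf 1+\Bigl(\sum_{N>n}\bar m(\tau>N)\Bigr)\,\Pi\hat h_2+O\bigl(R_\nu(n)\bigr)\|\hat h_2\|,
\]
where $\mathcal L$ is the transfer operator of $\bar f$ and $\Pi$ is the rank-one projection onto constants. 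Pairing with $\hat h_1$ against $\bar m$ gives exactly the expansion \eqref{Gouzel}.

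The three parts follow from this expansion. For (2), the hypothesis that $h_1,h_2$ are supported in $M_k$ is what makes $\hat h_1,\hat h_2$ concentrated on only finitely many tower columns, hence makes the relevant Banach-space norms finite with control in $k$; the renewal identity then holds termwise. For (1), one drops the support restriction and absorbs the leading term into the error: since $\sum_{N>n}m(\tau>N)=O(n^{-(\nu-1)})$ dominates $R_\nu(n)$ for every $\nu>1$, one recovers the qualitative bound $\text{Cor}_n(h_1,h_2)=O(n^{-(\nu-1)})$, valid for arbitrary H\"older pairs. For (3), the hypothesis $\int h_2\,dm=0$ kills the leading renewal term in \eqref{Gouzel} identically; a one-order-higher expansion of the renewal series then shows that the surviving remainder is in fact $O(n^{-\nu})$.

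The main obstacle is constructing the Banach-space framework: since the base $\Lambda$ is noncompact and the symbol alphabet is countable, one needs a uniform Lasota--Yorke inequality for $L$, the polynomial operator bounds $\|R_k\|=O(k^{-\nu-1})$ with explicit control of the dependence on the support of the observable, and verification of aperiodicity for the induced renewal sequence. It is here that conditions (Y3)--(Y6), the Lipschitz-type coupling inequality on $d(f^jx,f^jy)$, and the arithmetic assumption $\gcd\{\tau_i\}=1$ are all used essentially; everything else in the argument is a standard consequence of the renewal theorem.
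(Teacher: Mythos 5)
The paper does not prove this proposition --- it is quoted verbatim from \cite{SZ}, which in turn assembles the upper bound from the Young/Melbourne--Terhesiu coupling estimates and the asymptotic expansion from Gou\"ezel's operator renewal theorem. Your sketch of the renewal-theoretic route to item (2) --- quotient along stable leaves, Gibbs--Markov induced map, renewal decomposition $L=\sum R_k$, aperiodicity from $\gcd\{\tau_i\}=1$, and the asymptotic expansion of $\mathcal L^n$ --- is the right picture and matches the actual argument behind \eqref{Gouzel}.

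There is, however, a genuine gap in how you derive item (1). You propose to obtain the bound $\text{Cor}_n(h_1,h_2)=O(n^{-(\nu-1)})$ for \emph{arbitrary} H\"older $h_1,h_2$ by ``dropping the support restriction'' from the renewal expansion and absorbing the leading term into the error. But the support restriction to $M_k$ is precisely what makes the pulled-back observables lie in the Banach space on which the renewal operator acts with finite norm; without it, the lift $\hat h_1$ lives on infinitely many tower columns and has no a priori finite norm, so the expansion in item (2) is not available at all. In \cite{SZ} (and in the references \cite{Youngdecay}, \cite{mel} that the present paper points to), the upper bound (1) is established by a separate, direct argument --- Young's coupling method extended by Melbourne--Terhesiu to cover all H\"older observables --- which does not pass through the renewal asymptotics. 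So item (1) is not a corollary of item (2); it needs its own proof. A smaller imprecision: the hypothesis $m(\tau>n)=O(n^{-\nu})$ gives control of the tails $\sum_{k>n}\|R_k\|$, not the individual norms $\|R_k\|=O(k^{-\nu-1})$, and Gou\"ezel's theorem is stated in terms of the former.
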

We now verify the assumptions of Proposition \ref{SZ}. To prove that $\text{gcd}\{\tau_i\}=1$ we observe that the maps $\varphi_i,\ i=1,2,3$ and the map $H$ are homeomorphisms. Hence, it suffices to prove this for the linear map $A$. Since $A$ is Bernoulli, all powers of $A$ are ergodic. If $\text{gcd}\{\tilde{\tau}_i\}=d$ with $d\ne 1$, then the map $A^d$ would not be ergodic. Hence, $\text{gcd}\{\tilde{\tau}_i\}=1$. The requirement 
$m(\tau>n)=\mathcal{O}(\frac{1}{n^{\nu}})$ follows from Lemma \ref{upperbound} with 
$\nu=\gamma'-1$. 

To show that $d(f_M^j(x), f_M^j(y))\le K\max\{d(x,y), d(f_M^{\tau_i}(x), f_M^{\tau_i}(x))\}$ we observe that this is true for the map $f_{\T^2}^{\tau_i}$ of the torus which is smoothly conjugate to the map $f_M^{\tau_i}$. Therefore, by Proposition \ref{SZ}, we have
\begin{equation}\label{Gouz1}
\text{Cor}_n(h_1,h_2)=\sum_{N>n}^{\infty}m(\{x\colon\tau(x)>N\})\int_{M}h_1\,dm\int_{M}h_2\,dm+r_{\gamma'}(n),
\end{equation}
where $r_{\gamma'}(n)=\mathcal{O}(R_{\gamma'}(n))$ and  
$$
R_{\gamma'}(n)=
\begin{cases}
\frac{1}{n^{\gamma'-1}}& \text{if }\gamma'>3,\\ 
\frac{\log n}{n^2}& \text{if }\gamma'=3,\\ 
\frac1{n^{2\gamma'-4}}&\text{if }2<\gamma'<3.
\end{cases}
$$
By Lemmas \ref{lowerbound} and \ref{upperbound}, we have that  
$$
\frac{C_8}{n^{\gamma-1}}<m(\{x\in \Delta\colon\tau(x)>n\})<\frac{C_{11}}{n^{\gamma'-1}},
$$
where $\gamma$ and $\gamma'$ are defined by \eqref{gamma-gamma'}. Since the homeomorphisms $\varphi_1$, $\varphi_2$, and $\varphi_3$ are measure preserving we also have the same estimates for the map $f_M^{\tau}$.

To obtain a lower bound for correlations we consider separately the cases $\gamma'\ge 3$ and $2<\gamma'<3$.

Assume first that $\gamma'>3$, which is true if $\a<\frac16$. By assumption, 
$\int_{M}h_1\,dm\int_{M}h_2\,dm>0$ and Equation \ref{Gouz1} yields that
$$
\text{Cor}_n(h_1,h_2)>K_1\frac{1}{n^{\gamma-2}}-K_2\frac{1}{n^{\gamma'-1}}.
$$
Using definitions of $\gamma$ and $\gamma'$ (see \eqref{gamma-gamma'}) and choosing 
any $0<\mu<\frac 12$, one can show that $\gamma-2<\gamma'-1$ for all $0<\alpha<\frac16$.\footnote{One can use a computer assisted calculation to show that 
$\gamma-2<\gamma'-1$ for all $0<\a<0.42...$.} We conclude that for some $C>0$, 
$$
\text{Cor}_n(h_1,h_2)>C\frac{1}{n^{\gamma-2}}.
$$
Now, we consider the case when $\frac16<\a<\frac14$. This implies that $\gamma'>2$. Depending on the valse of $\mu$, we may have either $\gamma'>3$ or $\gamma'<3$ and we assume that latter (otherwise we are back to the previous case). With this assumption we have
$$
\text{Cor}_n(h_1,h_2)>K_1\frac{1}{n^{\gamma-2}}-K_2\frac{1}{n^{2\gamma'-4}}.
$$
Choosing again $0<\mu<\frac 12$, one can show that $\gamma-2<2\gamma'-4$ holds for all $0<\a<\frac14$.\footnote{Again a computer assisted calculation to show that $\gamma-2<2\gamma'-4$ holds for all $0<\a<0.36...$.} Thus we have the desired estimate
$$
\text{Cor}_n(h_1,h_2)>C\frac{1}{n^{\gamma-2}}
$$
for some $C>0$ and all $0<\a<\frac 14$.

In the case $\int_{M}h_2\,dm=0$ the desired result follows directly from the last statement of  Proposition \ref{SZ}.\\

An upper bound for the decay of correlations follows from the first statement of Proposition \ref{SZ}. So in our case we have for some $C'>0$
$$
|\text{Cor}_n(\hat{h_1},\hat{h_2})|<\frac{C'}{n^{\gamma'-2}}.
$$

\subsection{The Central Limit Theorem} By Theorem \ref{mainthm1}, for any H\"older continuous function $h$ with $\int h=0$  we have 
$\text{Cor}_n(h,h)=\mathcal{O}(\frac{1}{n^{\gamma'-1}})$. This implies that the correlation function is summable, when $\gamma'>2$ that is when $0<\a<\frac14$. The desired result now follows from \cite{liv} (see also \cite{SZ}, Theorem 3.1).

\subsection{The Large Deviation property}  We consider the Young tower $Y$ that 
represents the map $f_M$. For convenience the area $m_M$ on $M$ will be denoted by $m$. By Theorem \ref{upperbound}, we have that
$$
m(\{x\in \Lambda: \tau(x)>n\})<C_{11} \frac1{n^{\gamma'-1}},
$$
where $\gamma'=\frac1{2\a}+\frac{1-\mu}{2^{\a+2}}$. Hence for 
$0<\a<\frac 14$ Theorem 4.2 in \cite{melnic} applies yielding 
$$
m\Bigl(\Bigl |\frac{1}{n}\sum\limits_{i=0}^{n-1}h(f_M^i(x))-\int h\Bigr |>\varepsilon\Bigr)<C_{h,\delta}\varepsilon^{-2(\gamma'-2-\delta)}n^{-(\gamma'-2-\delta)},
$$
where $\gamma'=\frac{1}{2\a}+\frac{1-\mu}{2^{\a+2}}$. Moreover, for each 
$\delta>0$ the constant $C_{h,\delta}$ depends on the H\"older norm of $h$ continuously. 

To get a lower bound we need to check the conditions of Theorem 4.3 in \cite{melnic}. More precisely, for the set $\hat{Y}_k=\{(x,l)\in \hat{Y}: \tau(x)>k\}$ it must be true that for some $k,$ $m(\pi(\hat{Y}_k))<1,$ where $\pi:\hat{Y}\to M$ is given by $\pi(x,k)=f^k(x)$ as before.

Given $k$ let us chose a partition element $\Delta_i$ of the base $\Delta$ of the tower for $f_M$ with $\tau(\Delta_i)\le k$. Then 
$\Delta_i\subset \hat{Y}\setminus\hat{Y}_k$ and obviously 
$\hat{m}(\Delta_i)>0.$ Thus $\hat{m}(\hat{Y}_k)<1$ and we obtain $m(\pi(\hat{Y}_k))<1$ as $\pi$ is measure preserving. Thus, by Theorem 4.3 in \cite{melnic}, we obtain the lower bound
$$
\frac{1}{n^{\gamma'-2+\delta}}<m\Bigl(\Bigl|\frac{1}{n}\sum\limits_{i=0}^{n-1}h(f_M^i(x))-\int h\Bigr|>\varepsilon\Bigr)
$$
for small $\varepsilon$, open and dense subset of H\"older continuous observables $h$, and infinitely many $n$.

\subsection{The measure of maximal entropy (MME)} Recall that the diffeomorphism $f_M$ of the surface $M$ is a Young diffeomorphism and consider the corresponding collection $\{\Delta_i^s\}$ of $s$-sets. Denote by 
$\mathcal{S}_n=\{\Delta_i^s: \tau(\Delta_i^s)=n\}$. Since the map $f_M$ is topologically conjugate to the toral automorphism $A$, the number 
$\mathcal{S}_n$ for $f_M$ is equal to the number $\mathcal{S}_n$ for $A$. The latter is known to satisfy $\mathcal{S}_n\le e^{hn}$ with $h<h_{\text{top}}(A)$ (see \cite{THK}). It now follows from \cite{ind} (see Theorem 7.1) and \cite{SZ} that the map $f_M$ possesses a unique MME which has all the desired properties.

\end{document}